\newcommand{\ptwo}{P^2}
\newcommand{\pone}{P^1}
\newcommand{\kone}{k_1}
\newcommand{\tptwo}{\tilde{P}^2}
\newcommand{\lip}{\mathrm{Lip}}
\newcommand{\dtt}{\frac{dt}{t}}
\begin{document}
	
% 	\hfuzz=6pt
	
% 	\widowpenalty=10000
\setcounter{secnumdepth}{5}
\numberwithin{equation}{section}
\theoremstyle{plain}
	\newtheorem{theorem}{Theorem}[section]
	\newtheorem{proposition}[theorem]{Proposition}
	\newtheorem{coro}[theorem]{Corollary}
	\newtheorem{lemma}[theorem]{Lemma}
	\newtheorem{definition}[theorem]{Definition}
	
	\newtheorem{assum}{Assumption}[section]
	\newtheorem{example}[theorem]{Example}
	\newtheorem{remark}[theorem]{Remark}
	\renewcommand{\theequation}
	{\thesection.\arabic{equation}}
	
	\def\SL{\sqrt H}
	
	\newcommand{\mar}[1]{{\marginpar{\sffamily{\scriptsize
					#1}}}}
	
% 	\newcommand{\as}[1]{{\mar{AS:#1}}}

%%%%%%%%%%%%%%%ShaomingGuo

%%%%%%%%%%%%

	\newcommand\R{\mathbb{R}}
	\newcommand\RR{\mathbb{R}}
	\newcommand\CC{\mathbb{C}}
	\newcommand\NN{\mathbb{N}}
	\newcommand\ZZ{\mathbb{Z}}
	\def\RN{\mathbb{R}^n}
	\renewcommand\Re{\operatorname{Re}}
	\renewcommand\Im{\operatorname{Im}}
	
	\newcommand{\mc}{\mathcal}
	\newcommand\D{\mathcal{D}}
	\def\hs{\hspace{0.33cm}}
	\newcommand{\la}{\lambda}
	\def \l {\lambda}
	\newcommand{\eps}{\varepsilon}
	\newcommand{\pl}{\partial}
	\newcommand{\supp}{{\rm supp}{\hspace{.05cm}}}
	\newcommand{\x}{\times}
	\newcommand{\lag}{\langle}
	\newcommand{\rag}{\rangle}
	
	\newcommand\wrt{\,{\rm d}}
	
	\newcommand {\BMO}{{\mathrm{BMO}}}
	\newcommand {\Rn}{{\mathbb{R}^{n}}}
	\newcommand {\rb}{\rangle}
	\newcommand {\lb}{{\langle}}
	\newcommand {\HT}{\mathcal{H}}
	\newcommand {\Hp}{\mathcal{H}^{p}_{FIO}(\Rn)}
	\newcommand {\ud}{\mathrm{d}}
	\newcommand {\Sp}{S^{*}(\Rn)}
	\newcommand {\Sw}{\mathcal{S}}
	\newcommand {\w}{{\omega}}
	\newcommand {\ph}{{\varphi}}
	\newcommand {\para}{{\mathrm{par}}}
	\newcommand {\N}{{{\mathbb N}}}
	\newcommand {\Z}{{{\mathbb Z}}}
	\newcommand {\F}{{\mathcal{F}}}
	\newcommand {\C}{{\mathbb C}}
	\newcommand {\vanish}[1]{\relax}
	\newcommand {\ind}{{\mathbf{1}}}
	
	\newcommand{\wt}{\widetilde}
	
	\newtheorem{corollary}[theorem]{Corollary}

	%%%%%%%%%%%%%%%%%%%%%%%%%%%%%%%%%%%%%%%%%%%%%%%%%%%%%%%%%%%Shaoming Guo%%%%%%%%%%%%%%%%%%%%%%%%%%%%%%%%

\newcommand{\czerol}{C_{0, L}}
\newcommand{\czerou}{C_{0, U}}
\newcommand{\conel}{C_{1, L}}
\newcommand{\coneu}{C_{1, U}}
	\def\beq{\begin{equation}}
		\def\endeq{\end{equation}}
	\def\lesim{\lesssim}
	\def\mc{\mathcal}
	\def\pnorm#1{{ \Big(  #1 \Big) }}
	\def\norm#1{{ \Big|  #1 \Big| }}
	\def\Norm#1{{ \Big\|  #1 \Big\| }}
	\def\inn#1#2{\langle#1,#2\rangle}
	
	\def\set#1{{ \left\{ #1 \right\} }}
	\def\vector{\overrightarrow}
	\def\dim{\text{dim}}

	\newcommand{\red}[1]{{\color{red} #1}}

	\newcommand{\blue}[1]{{\color{blue} #1}}

	\newcommand{\sg}[1]{{\bf\color{red}{#1}}}

	%%%%%%%%%%%%%%%%%%%%%%%%%%%%%%%%%%%%%%%%%%%%%%%%%%%%%%%%%%%%%%%%%%%%%%%%

	\title[Hilbert transform along planar curves]
	{Hilbert transforms along variable planar curves: Lipschitz regularity}
	\thanks{{\it 2010
			Mathematical Subject Classification.} Primary 42B20;
		Secondary 42B25.}
	\thanks{{\it Key words and phrases:}  Hilbert transform, variable curve, square function estimate, local smoothing estimate.}

	\author{Naijia Liu \ and \ Haixia Yu}
	
	\address{
		Naijia Liu,
		Department of Mathematics,
		Sun Yat-sen University,
		Guangzhou, 510275,
		P.R.~China}
	\email{liunj@mail2.sysu.edu.cn}
	
	\address{
		Haixia Yu,
		Department of Mathematics,
		Sun Yat-sen University,
		Guangzhou, 510275,
		P.R.~China}
	\email{yuhx26@mail.sysu.edu.cn}

	\subjclass[]{}

	\begin{abstract}
		In this paper, for $1<p<\infty$, we obtain the $L^p$-boundedness of the Hilbert transform $H^{\gamma}$ along a variable plane curve $(t,u(x_1, x_2)\gamma(t))$, where $u$ is a Lipschitz function with small Lipschitz norm, and $\gamma$ is a general curve satisfying some suitable smoothness and curvature conditions.
	\end{abstract}
	
	\maketitle

\section{introduction}
	
Let $u:\ \R^2\to \R$ be a measurable function, $\gamma:\ [-1, 1]\to \R$ with $\gamma(0)=0$ be a smooth function that is either even or odd, and is strictly increasing on $[0, 1]$, we define
\begin{align}
\label{0721-operator-hilbert} H^{\gamma}f(x_1,x_2)={\rm p.\,v.}\int_{-1}^{1}f(x_{1}-t,x_{2}-u(x_1,x_2)\gamma(t))\,\dtt.
\end{align}
The main purpose of this article is to study the $L^{p}$-boundedness of the Hilbert transform $H^{\gamma}$ along the variable plane curve $(t,u(x_1,x_2)\gamma(t))$ defined in \eqref{0721-operator-hilbert}.
	
Our Hilbert transform $H^{\gamma}$ is motivated by Stein's conjecture \cite{MR934224}, which correspond to the limiting case $\gamma(t)=t$ not covered here. The so-called Stein conjecture can be stated as follows: Denote
$$H_{\varepsilon_0}f(x_1,x_2)={\rm p.\,v.}\int_{-\varepsilon_0}^{\varepsilon_0}
f(x_1-t,x_2-u(x_1,x_2)t)\,\frac{dt}{t},
$$
one wishes to know if the operator $H_{\varepsilon_0}$ is bounded on $L^p$ for some $p\in(1,\infty)$, where $u:\ \mathbb{R}^2\rightarrow  \mathbb{R}$ is a Lipschitz function and $\varepsilon_0>0$ small enough depending on $\|u\|_{\textrm{Lip}}$. It should be mentioned that this Lipschitz regularity imposed on $u$ is critical, since a counterexample based on a construction of the Besicovitch-Kakeya set shows that no $L^p$-boundednesss of $H_{\varepsilon_0}$ is possible for $C^\alpha$ regularity with $\alpha<1$, where $1<p<\infty$.

We now state some historical evolutions of this conjecture. For any real analytic function $u$, Stein and Street \cite{MR2880220} obtained the $L^p$-boundedness of $H_{\varepsilon_0}$ for all $1<p<\infty$. For $C^\infty$ vector fields $u$ under certain geometric assumptions, Christ, Nagel, Stein and Wainger \cite{MR1726701} showed that $H_{\varepsilon_0}$ is bounded on $L^{p}$ for all $1<p<\infty$. For $u\in C^{1+\alpha}$ with $\alpha>0$, Lacey and Li \cite{MR2654385} proved the $L^2$-boundedness of $H_{\varepsilon_0}$, conditioning on
the boundedness of the so-called Lipschitz-Kakeya maximal operator. For some other partial results toward this conjecture, we refer to \cite{MR545242,MR1009171,MR2219012}.

In the last decade further model problems, with the additional simplifying condition about $u$ have been considered: Based on the works of Lacey and Li \cite{MR2219012,MR2654385}, Bateman \cite{MR3090145} proved the $L^p$-boundedness of $H_{\varepsilon_0}P_{j}^{2}$, with a bound being independent of $j\in \mathbb{Z}$, where $u(x_1,x_2)=u(x_1,0)$ is only a one-variable measurable function, $P_{j}^{2}$
is the Littlewood-Paley projection applied in the second variable and $1<p<\infty$. Later, under the same condition about $u$, Bateman and Thiele \cite{MR3148061} obtained the $L^p$-boundedness of $H_{\varepsilon_0}$ for all $3/2<p<\infty$. Furthermore, Guo \cite{MR3393679,MR3592519} obtained some similar results under the condition that $u$ is constant along a Lipschitz curve, i.e., a perturbation of Bateman and Thiele's result \cite{MR3090145,MR3148061} in the critical Lipschitz regularity.

The so-called Stein conjecture is very difficult. However, we tend to think that the presence of the curvature in $H_{\varepsilon_0}$ is another direction of considering this outstanding open problem. Therefore, we here extend the variable straight line $(t,u(x_1,x_2)t)$ in $H_{\varepsilon_0}$ to the variable curve $(t,u(x_1,x_2)\gamma(t))$ in $H^{\gamma}$ (see \eqref{0721-operator-hilbert}), and consider the $L^{p}$-boundedness of the operator $H^{\gamma}$.

\bigskip

We now state our main theorem.

\begin{theorem}\label{f19}
			We assume that $\gamma$ satisfies the following conditions:
			\begin{enumerate}\label{curve gamma}
				\item[\rm(i)]
				$\inf_{0<t\le 1}\big|(\gamma'/\gamma'')'(t)\big|>0;$
				\item[\rm(ii)]
				$\min_{j\in \{1, 2\}} \inf_{0<t\le 1}\big|t^j\gamma^{(j)}(t)/\gamma(t)\big|>0;$
				\item[\rm(iii)]
				$\sup_{1\le j\le N} \sup_{0<t\le 1}\big|t^{j}\gamma^{(j)}(t)/\gamma(t)\big|<\infty,$ for a sufficiently large integer $N$.
			\end{enumerate}
			Then for every $u: \R^2\to \R$ with $\|u\|_{\lip}\le 1/(2\gamma(1))$, it holds that
			\begin{equation}
				\|H^{\gamma} f\|_p \lesim_{\gamma, p} \|f\|_p
			\end{equation}
			for every $1<p<\infty$.
\end{theorem}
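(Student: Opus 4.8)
The plan is to combine an anisotropic rescaling adapted to $\gamma$ with a frequency decomposition in the second variable and single-scale oscillatory estimates, in the spirit of Nagel--Riviere--Wainger for fixed curves and of Bateman--Thiele and Guo for the variable case.

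\textbf{Step 1 (dyadic decomposition in $t$, rescaling, single-scale bound).} Write $H^{\gamma}=\sum_{j\ge0}H_{j}$, where $H_{j}$ is the same integral restricted to $|t|\sim 2^{-j}$ (no principal-value issue survives in a single $H_{j}$). Each $H_{j}$ is \emph{individually} easy: translating $x_{1}\mapsto x_{1}+t$ and noting that, for fixed $(x_{1},t)$, the map $x_{2}\mapsto x_{2}-u(x_{1},x_{2})\gamma(t)$ has derivative $1-\gamma(t)\,\partial_{2}u\in[1/2,3/2]$ — since $|\gamma(t)|\le\gamma(1)$ and $\|u\|_{\lip}\le 1/(2\gamma(1))$ — a change of variables gives $\|H_{j}f\|_{p}\lesim_{p}\|f\|_{p}$ uniformly in $j$. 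Hence the whole difficulty is to sum over $j$, i.e.\ to remove the logarithmic loss, and this is exactly where curvature must enter. Substituting $t=2^{-j}s$ and conjugating by the anisotropic dilation $(x_{1},x_{2})\mapsto(2^{-j}x_{1},\gamma(2^{-j})x_{2})$ (an $L^{p}$-isometry), $H_{j}$ becomes a unit-scale operator $\widetilde H_{j}$ along the curve $\big(s,\,u_{j}(x)\,\widetilde\gamma_{j}(s)\big)$, $|s|\sim1$, with $\widetilde\gamma_{j}(s):=\gamma(2^{-j}s)/\gamma(2^{-j})$ and $u_{j}(x):=u(2^{-j}x_{1},\gamma(2^{-j})x_{2})$. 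Conditions (ii)--(iii) say that $\gamma$ is regularly varying with $t^{i}\gamma^{(i)}(t)\sim\gamma(t)$, so the $\widetilde\gamma_{j}$ form a \emph{uniform, nondegenerate model curve} on $|s|\sim1$; the normalization $\|u\|_{\lip}\le 1/(2\gamma(1))$ is precisely what keeps $\|u_{j}\|_{\lip}\lesim 1$ at every scale, with $\|u_{j}\|_{\lip}\to0$ as $j\to\infty$ — this scale invariance is the reason Lipschitz regularity is the critical threshold.

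\textbf{Step 2 (Littlewood--Paley in $x_{2}$ and matched scales).} Let $\Delta_{k}$ localize the $\xi_{2}$-frequency to $|\xi_{2}|\sim2^{k}$. Because $u$ oscillates by at most $2^{-k}\|u\|_{\lip}$ over one $x_{2}$-wavelength, the phase $u(x)\gamma(t)\xi_{2}$ changes by $O(1)$ there, so $H_{j}$ essentially preserves the $\xi_{2}$-annulus: $\Delta_{k'}H_{j}\Delta_{k}$ is negligible unless $|k-k'|=O(1)$. The effective phase attached to $\Delta_{k}f$ has size $2^{k}|u(x)|\gamma(2^{-j})$; when this is $\lesim 1$ the vertical shift is invisible and $H_{j}\Delta_{k}$ behaves like a one-dimensional truncated Hilbert transform in $x_{1}$ acting on $x_{2}$-slices, whose sum over such (large) $j$ is bounded. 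Thus for each $k$ there is an essentially unique matched scale $j(k)$ with $2^{k}|u|\gamma(2^{-j(k)})\sim1$, and the task reduces to the single-annulus estimate $\|\Delta_{k}H_{j}\Delta_{k}f\|_{p}\lesim 2^{-\delta|j-j(k)|}\|f\|_{p}$ for some $\delta>0$, together with the analogous off-diagonal decay in $|k-k'|$.

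\textbf{Step 3 (the oscillatory single-annulus estimate: freezing $u$ and local smoothing).} Freeze $u$ on a grid of cubes $Q$ at the scale dictated by $k$ (so that the vertical displacement $u\gamma(2^{-j})$ over $Q$ stays within one $x_{2}$-wavelength): on $Q$ one has $u=\ell_{Q}+(\text{error})$ with $\ell_{Q}$ affine, the error being negligible precisely because $\|u\|_{\lip}$ is small. After an affine change of variables removing $\ell_{Q}$, the piece on $Q$ becomes (the kernel of) a Fourier integral operator associated with the averaging operator $A_{s}g(x)=g\big(x_{1}-s,x_{2}-c\,\widetilde\gamma_{j}(s)\big)$ along the fixed model curve; condition (i) — $(\gamma'/\gamma'')'$ bounded away from $0$ — is exactly the cinematic-curvature nondegeneracy needed for the associated \emph{local smoothing estimate}, of the schematic form
\begin{equation*}
\Big\|\Big(\int_{|s|\sim1}|A_{s}g|^{q}\,\tfrac{ds}{s}\Big)^{1/q}\Big\|_{L^{p}}\lesim \|g\|_{L^{p}_{-\sigma}},\qquad \sigma>0,
\end{equation*}
which, upon undoing the rescaling of Step 1, converts the $\sigma$ derivatives into the decay $2^{-\delta|j-j(k)|}$ in $L^{p}$. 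Summing the contributions of the cubes $Q$ via an $L^{p}$ Littlewood--Paley square-function estimate, then summing over the annuli $k$ using the off-diagonal decay and one further square function, and finally summing the geometric series in $|j-j(k)|$, yields the theorem; the smallness of $\|u_{j}\|_{\lip}$ keeps every error term and every cube overlap summable.

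\textbf{Main obstacle.} The heart of the argument is Step 3: establishing the single-annulus estimate with a genuinely \emph{variable} coefficient $u$, in $L^{p}$ rather than just $L^{2}$, with a constant that survives the rescaling of Step 1 and the summations over $j$ and $k$ without loss. This is where the curvature hypotheses (i)--(iii) are indispensable — they furnish a uniform nondegenerate model curve and the local smoothing and square-function estimates for it — and where the critical Lipschitz bound with small norm is used essentially, to linearize $u$ at the correct scale and to control all error and overlap terms. Everything else — the trivial single-scale bound, the reduction to matched scales, the one-dimensional Hilbert-transform tail, and the final summation — is routine once this estimate is in hand.
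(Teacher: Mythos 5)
Your outline does identify the correct skeleton — Littlewood--Paley in $x_2$, a matched scale $j(k)$ determined by $2^k|u|\gamma(2^{-j(k)})\simeq 1$, rescaled model curves $\widetilde\gamma_j$ made uniform by (ii)--(iii), and a local smoothing estimate whose cinematic curvature comes from (i) — but two steps that you label as routine are precisely where the proof lives, and as written they are gaps. First, the off-diagonal part: you assert that $\Delta_{k'}H_j\Delta_k$ is ``negligible unless $|k-k'|=O(1)$'' because $u$ oscillates by $O(2^{-k})$ over one wavelength. Since $u$ is variable, $H_j$ is not a Fourier multiplier and does not preserve $\xi_2$-annuli; the uncertainty-principle heuristic gives no decay that is summable over $j$ and $k$. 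The paper's treatment of $\sum_k(1-\tilde P^2_k)H^{\gamma}P^2_kf$ (Lemma \ref{a1}) is a genuine input: it invokes the commutator estimate of Di Plinio--Guo--Thiele--Zorin-Kranich, which rests on Jones' beta numbers for Lipschitz $u$, and then uses $|\gamma(t)|\lesssim|t|^{\log_2 C_{0,L}}$ to integrate against $dt/|t|$. Without that (or an equivalent), your reduction to the diagonal does not close.

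Second, and more seriously, the summation over scales for $p\neq 2$. After the single-annulus local smoothing bound with decay $2^{-\delta_p l}$ in the distance $l$ from the matched scale, one still must prove the vector-valued inequality \eqref{201127e6-20zz}, i.e. an $\ell^2(k)$ square-function bound for the operators $A_{u_x,l_0-l}P^1_{k_1}P^2_k$ with at most polynomial loss in $l$. This is not ``one further square function'': the coefficient $u_x$ is variable (effectively a $\sup_{u\in[1,2]}$ inside the $\ell^2$ sum), and the first-variable frequencies $k_1=k_1(v,k,l)$ are slaved to $k$ through $\gamma'$, so standard Littlewood--Paley theory does not apply. The paper's resolution — a vector-valued version of Seeger's $L^p$-orthogonality principle carried out on the homogeneous type space with quasi-metric $\rho(x)=\gamma(|x_1|^{-1})^{-1}+|x_2|$ (Sections 3--4, culminating in Lemmas \ref{a13} and \ref{a39}), plus the maximal-function input of \cite{NjSlHx20} for $p<2$ — is the main technical novelty and is entirely absent from your plan. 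Relatedly, your Step 3 linearizes $u$ on cubes and removes the affine part by a change of variables; the paper never does this (it keeps $u$ as a free parameter and controls $\sup_u$ via a Sobolev embedding in $u$ together with cinematic curvature in the $u$ variable), and the patching of the frozen pieces across cubes in $L^p$ is exactly as hard as the vector-valued estimate you are trying to avoid.
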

		
\begin{remark}
In \cite{NjSlHx20}, for $1<p\leq\infty$, Liu, Song and Yu obtained the $L^{p}$-boundedness of the corresponding maximal function $M^{\gamma}$ along the variable curve $(t,u(x_1,x_2)\gamma(t))$, under the same assumptions imposed on $u$ and $\gamma$ as in Theorem \ref{f19}. Therefore, Theorem \ref{f19} should be regarded as a natural continuation of \cite{NjSlHx20}.
\end{remark}

\begin{remark}
The smoothness assumption and the strict monotonicity assumption on $\gamma$ are made to avoid certain technicality. In the assumption $\rm(iii)$ of the above theorem, it is more than enough to take $N=10^3$. Some examples of curves satisfying all of the assumptions in Theorem \ref{f19} can be found in \cite[Example 1.5]{NjSlHx20}. In particular, the non-flat homogeneous curves $[t]^{\alpha}$ satisfy our assumptions in Theorem \ref{f19}, where the notation $[t]^{\alpha}$ stands for either $|t|^{\alpha}$ or \textrm{sgn}$(t)|t|^{\alpha}$, $\alpha\in (0,1)\cup (1,\infty)$. Moreover, one can allow  the function $u$ to have a larger Lipschitz norm if one is willing to replace the truncation $[-1, 1]$ in \eqref{0721-operator-hilbert} by a smaller interval.
\end{remark}

\bigskip

To further collect some historical evolutions of our operator \eqref{0721-operator-hilbert}, let us ignore the truncation $[-1, 1]$ and define the Hilbert transform along the variable curve $(t,u(x_1,x_2)\gamma(t))$ as
$$H^{\gamma}_{\infty}f(x_1,x_2)={\rm p.\,v.}\int^{\infty}_{-\infty}
f(x_1-t,x_2-u(x_1,x_2)\gamma(t))\,\frac{dt}{t}.$$

For the case $u$ is constant, the study of the boundedness properties of $H^{\gamma}_{\infty}$ first appeared in the works of Jones \cite{MR161099} and Fabes and Rivi\`ere \cite{MR209787} for studying the behavior of the constant coefficient parabolic differential operators. Later, the study has been extended to  more general families of curves; see, for example, \cite{MR508453,MR714828,MR813582,MR1046743,MR1296728}. This is a classical area of harmonic analysis, which is directly related to the pointwise convergence of Fourier series and the Calder$\acute{\textrm{o}}$n-Zygmund theory.

For the case $u$ is one-variable function, in \cite{MR1364881}, Carbery, Wainger and Wright first obtained the $L^{p}$-boundedness of $H^{\gamma}_{\infty}$ for all $1<p<\infty$, but with the restriction that $u(x_1,x_2)=x_1$, where $\gamma\in C^{3}(\mathbb{R})$ is either an odd or even convex curve on $(0,\infty)$ satisfying $\gamma(0)=\gamma'(0)=0$ and the quantity $t\gamma''(t)/\gamma'(t)$ is decreasing and bounded below on $(0,\infty)$. Under the same conditions imposed on $\gamma$, Bennett \cite{MR1926840} proved the $L^2$-boundedness of $H^{\gamma}_{\infty}$ under the restriction that $u(x_1,x_2)=P(x_1)$, where $P$ is a polynomial. Some other related results under the same restriction about $u$, we refer to \cite{MR2948242,LiYu21}. In \cite{MR1689214}, Carbery and P\'{e}rez showed the $L^{p}$-boundedness of $H^{\gamma}_{\infty}$ for all $1<p<\infty$ as a generalization of the result in Seeger \cite{MR1258491}, where $u(x_1,x_2)\gamma(t)$ was written as $S(x_1,x_1-t)$, under more restrictive third order assumptions about $S$. A key breakthrough about this one-variable case was made by Guo, Hickman, Lie and Roos in \cite{MR3669936}, where the $L^p$-boundedness of $H^{\gamma}_{\infty}$ was proved for all $1<p<\infty$, and $u(x_1,x_2)=u(x_1,0)$ is only a measurable function and $\gamma(t)=[t]^{\alpha}$ with $\alpha\in (0,1)\cup (1,\infty)$.

For the case $u$ is general two-variable function, Seeger and Wainger \cite{MR2053571} obtained the $L^{p}$-boundedness of $H^{\gamma}_{\infty}$ for all $1<p<\infty$, where $u(x_1,x_2)\gamma(t)$ was written as $\Gamma(x_1,x_2,t)$, under some convexity and doubling hypothesis uniformly in $(x_1,x_2)$. A single annulus $L^p$ estimates for $H^{\gamma}_{\infty}$ for all $2<p<\infty$ were obtained in Guo, Hickman, Lie and Roos \cite{MR3669936}, where $u$ is only a measurable function and $\gamma(t)=[t]^{\alpha}$ with $\alpha\in (0,1)\cup (1,\infty)$. Recently, for the same non-flat homogeneous curve $[t]^{\alpha}$, Di Plinio, Guo, Thiele and Zorin-Kranich \cite{MR3841536} obtained the $L^{p}$-boundedness of the truncated Hilbert transform $H^{\gamma}$ along the variable curve $(t,u(x_1, x_2)[t]^{\alpha})$ if $1<p<\infty$ and $\|u\|_{\textrm{Lip}}$ is small, where the truncation $[-1, 1]$ plays a crucial role.

\bigskip

In the end, let us mention the new ingredients that will be used to prove our Theorem \ref{f19}. It is easy to see that Theorem \ref{f19} is the generalization of Di Plinio, Guo, Thiele and Zorin-Kranich's result \cite{MR3841536} for $H^{\gamma}$ from the special non-flat homogeneous curve $[t]^{\alpha}$ to more general curve $\gamma(t)$. In the non-flat homogeneous curve case $\gamma(t)=[t]^\alpha$, we have the following special property, i.e.,
  \begin{align}\label{special property}
  \gamma(ab)=\gamma(a)\gamma(b), \quad  \textrm{for all}\ \  a, b>0,
 \end{align}
which implies that one can add $v(x,y)(u(x,y)v^{-1}(x,y))^{\alpha/(\alpha-1)}$ in the partition of unity to split these operators considered, where $v(x,y)$ is the largest integer power of $2$ less than $u(x,y)$, see \cite[(2.36)]{MR3841536} for more details. In the general curve case, such a decomposition is not appropriate based on the absence of this property \eqref{special property}. Therefore, we have to split our operators by the classical partition of unity, i.e., $1=\Sigma_{l\in \mathbb{Z}} \phi(2^{l}t)$, see \eqref{201127e6-9}. However, we still will encounter the difficulty of $\gamma(2^{-l}t)\neq \gamma(2^{-l})\gamma(t)$, even though we have used this classical partition of unity. To overcome this difficulty and separate $2^{-l}$ from $\gamma(2^{-l}t)$, we may therefore replace $\gamma(2^{-l}t)$ by $\gamma_l(t)=\gamma(2^{-l}t)/\gamma(2^{-l})$ (see \eqref{c2}) in our proof. In the following Lemma \ref{b44}, we will see that $\gamma_l$ behaves ``uniformly" in the parameter $l$.

On the other hand, in \cite{MR3841536}, the authors need to obtain a local smoothing estimate for
$$\mathcal{A}_{u,t^\alpha} f(x,y)=\int_{-\infty}^{\infty} f(x-ut,y-ut^\alpha)\psi_0(t)\,dt.
$$
However, for the general curve case considered here, since the lack of this property \eqref{special property}, we may not reduce our local smoothing estimate for the corresponding operator $\mathcal{A}_{u,\gamma(t)}$, which is one of the main difficulties in the problem. In this paper, we establish a local smoothing estimate for $A^l_u$ (see \eqref{201210e7.7}) based on Sogge's cinematic curvature condition in \cite{MR4078231}. Compared with $\mathcal{A}_{u,t^\alpha}$, the local smoothing estimate for $A^l_u$ will leads to essential difficulties, since the critical point of the phase function in $\mathcal{A}_{u,t^\alpha}$ is independent of $u$, but the critical point of the phase function in $A^l_u$ will depend on $u$.

Another highlight of this paper is that we generalize the result in Seeger \cite{MR955772} to a general homogeneous type space associated with curve $\gamma$. Because the dilation  $(2^{-l_{0}}\gamma_{l_{0}}(2^{l})^{-1},2^{-k})$ is not homogeneous, we can not use the result in Seeger \cite{MR955772} directly, one novelty is that we discovered a corresponding theory which work in the general homogeneous type space. To prove Lemma \ref{a39}, if one use the Sobolev embedding inequality, it will cause different loss for different values of $k$.  To overcome this difficulty, we thus should make a additional restriction on the doubling constants $C_{0,L}$ and $C_{0,U}$, which will leads to an uniform estimate for different values of $k$. To get a better result, we do not first use the Sobolev embedding inequality and read $\sup_{u\in[1,2]}(\sum_{k\in E_{v}}|\cdot|^{2})^{1/2}$ as a norm in a Banach space and generalize Seeger's result to some functions which take values in a Banach space.

\bigskip

The article is organized as follows. In Section $2$, we obtain several properties about $\gamma$ which we need in the proof of our main theorem. In Section $3$, we study dyadic cubes, dyadic maximal functions and dyadic sharp maximal functions in a general homogeneous type space, which allows us to prove a key Lemma \ref{a13} in Section $4$. In Section $4$, we prove this key Lemma \ref{a13} similar as the one in Seeger \cite{MR955772}, where the underlying space is $\mathbb{R}^{n}$ instead of the general homogeneous type space. This lemma allows us to obtain Lemma \ref{a39}, which further leads to Theorem \ref{f19} if $p>2$. In Section $5$, we prove our main Theorem \ref{f19}. To make the proofs in Section $5$ more clearly, we put the proofs of four lemmas in Section $5$ to Sections $6$, $7$ and $8$.

\bigskip

Throughout this paper, we write $f(s)\lesssim g(s)$ to indicate that $f(s)\leq Cg(s)$ for all $s$, where the constant $C>0$ is independent of $s$ but allowed to depend on $\gamma$, and similarly for $f(s)\gtrsim g(s)$ and $f(s)\simeq g(s)$. $\hat{f}$ denotes the Fourier transform of $f$, and $\check{f}$ is the inverse Fourier transform of $f$. Let $\mathcal{S}$ denote the collection of all Schwartz functions. For any $s\in \R$, we denote by $[s]$ the integer part of $s$. For any set $E$, we use $\chi_E$ to denote the characteristic function of $E$, and $E^{\complement}$ indicates its complementary set.

\section{some properties about $\gamma$}

First of all, without loss of generality, we assume that $\gamma(1)=1$. As a consequence, the function $u$ satisfies $\|u\|_{{\rm Lip}}\le 1/2$.
To simplify our future discussion, we extend the domain of the function $\gamma$ from $[-1, 1]$ to $\R$ by setting
		\begin{equation}\label{201124e2-3}
		\gamma(t)=\gamma(t^{-1})^{-1}
		\end{equation}
for every $|t|\ge 1$. Then we have $\gamma(t)\in C([0,\infty)),$ $\gamma(t)$ is increasing on $[0,\infty)$ and $\gamma([0,\infty))=[0,\infty).$ From the assumption $\rm(ii)$ in Theorem \ref{f19} with $j=2$, we see that $\gamma$ is either convex or concave. These two cases can be handled in exactly the same way, and therefore we without loss of generality assume that $\gamma$ is convex, that is, $\gamma''(t)>0$ for every $0<t\leq1$.
		
%		\begin{definition}
%			We say that $\gamma$ satisfies a doubling condition on an interval $I\subset \R^{+}$ if there exists two constants $C_{1}>1$ and $C_{2}>1$ such that
%			\begin{equation}
%				C_{1}\le \gamma(2t)/\gamma(t)\le C_{2},
%			\end{equation}\label{f21}
%			for every $t$ with $t\in I$ and $2t\in I$.
%		\end{definition}

	% For further discussion, we need the following definition about $\gamma(t)$.
	% \begin{definition}\label{f1}
	% Let $\gamma(t)\in C[0,\infty)$ satisfies
	% \begin{enumerate}
	%   \item[\rm(i)] $\gamma(0)=0$ and $\lim_{t\rightarrow\infty}\gamma(t)=\infty$;
	%   \item[\rm(ii)] $\ \gamma(t)$ is increasing on $[0,\infty)\ $ and $\ \gamma[0,\infty)=[0,\infty)$;
	%   \item[\rm(iii)] There exists constants $C_{4},C_{5}>1$ so that for all $t\in(0,\infty)$,
	% $$C_{4}\leq \frac{\gamma(2t)}{\gamma(t)}\leq C_{5}.$$
	% \end{enumerate}
	
	% \end{definition}
	\begin{lemma}\label{c1}
		Under the above assumptions on $\gamma$, we have
		\begin{enumerate}\label{curve gamma_z}
%			\item[\rm(i)]   $\gamma'(t)$ is monotone on  \red{(0,1]};
			\item[\rm(i)] $\gamma(t)$ satisfies a doubling condition on $\R^{+}$, i.e.,
\begin{equation}
			 \czerol\le \frac{\gamma(2t)}{\gamma(t)}\le \czerou
			\end{equation}
for some $\czerol>1$ and $\czerou>1$ that depend only on $\gamma$;
			\item[\rm(ii)] There exists a positive constant $\conel>1$ such that
			\begin{align*}
				\conel\leq \frac{\gamma'(2t)}{\gamma'(t)}
			\end{align*}
 for all $0< t\leq1/2$.
		\end{enumerate}
		
	\end{lemma}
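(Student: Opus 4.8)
The plan is to exploit the two nondegeneracy hypotheses (ii) and (iii) on $\gamma$ in Theorem \ref{f19}, which say (after the normalization $\gamma(1)=1$ and the extension \eqref{201124e2-3}) that the logarithmic derivatives $t\gamma'(t)/\gamma(t)$ and $t^2\gamma''(t)/\gamma(t)$ are bounded above and below by positive constants on $(0,1]$. For part (i), the natural approach is to integrate: writing $\tfrac{d}{dt}\log\gamma(t) = \gamma'(t)/\gamma(t) = \big(t\gamma'(t)/\gamma(t)\big)\cdot t^{-1}$ and using $0 < c_1 \le t\gamma'(t)/\gamma(t) \le c_2$ on $(0,1]$, we get, for $0 < t \le 1/2$,
\begin{equation*}
\log\frac{\gamma(2t)}{\gamma(t)} = \int_t^{2t} \frac{\gamma'(s)}{\gamma(s)}\,ds = \int_t^{2t} \frac{s\gamma'(s)/\gamma(s)}{s}\,ds \in [c_1\log 2,\ c_2\log 2],
\end{equation*}
which gives $2^{c_1} \le \gamma(2t)/\gamma(t) \le 2^{c_2}$ on $(0,1/2]$; since $c_1 > 0$ the lower bound exceeds $1$. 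For $t \ge 1/2$ one transfers the estimate through the functional equation \eqref{201124e2-3}: if $t \ge 1$ then $\gamma(2t)/\gamma(t) = \gamma\big((2t)^{-1}\big)^{-1}/\gamma(t^{-1})^{-1} = \gamma(t^{-1})/\gamma((2t)^{-1}) = \gamma(2\cdot(2t)^{-1})/\gamma((2t)^{-1})$, which is already covered since $(2t)^{-1} \le 1/2$; the remaining small window $t \in [1/2,1]$ is handled by continuity and compactness, the ratio being a continuous strictly-greater-than-one function there (strictly greater than one because $\gamma$ is strictly increasing). This yields $\czerol,\czerou$ depending only on $\gamma$.

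For part (ii), the same device applies to $\gamma'$ in place of $\gamma$: by hypothesis (i) of Theorem \ref{f19}, $(\gamma'/\gamma'')'$ is bounded away from $0$ on $(0,1]$, and hypothesis (iii) controls $\gamma^{(j)}$ from above; combining these shows that $t\gamma''(t)/\gamma'(t)$ is bounded below by a positive constant $\delta$ on $(0,1]$ (one may also see this from the convexity normalization together with (ii), since $\gamma''>0$ and the logarithmic-derivative ratios are comparable). Then for $0 < t \le 1/2$,
\begin{equation*}
\log\frac{\gamma'(2t)}{\gamma'(t)} = \int_t^{2t} \frac{\gamma''(s)}{\gamma'(s)}\,ds = \int_t^{2t}\frac{s\gamma''(s)/\gamma'(s)}{s}\,ds \ge \delta \log 2 > 0,
\end{equation*}
so $\conel := 2^{\delta} > 1$ works. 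Note we only claim the lower bound for $\gamma'$ and only on $(0,1/2]$, so no transfer through \eqref{201124e2-3} is needed here.

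The computations above are essentially routine once the logarithmic-derivative bounds are in hand; the only genuine point requiring care is the bookkeeping for part (i) near $t=1$, i.e.\ checking that the extension \eqref{201124e2-3} does not destroy the doubling property at the gluing scale. Here the key observations are that $\gamma$ remains continuous and strictly increasing across $t=1$ (so the ratio $\gamma(2t)/\gamma(t)$ is continuous and $>1$ on the compact set $[1/2,1]$) and that \eqref{201124e2-3} is precisely the symmetry that turns a doubling bound on $(0,1/2]$ into one on $[2,\infty)$. I therefore expect the main (mild) obstacle to be organizing the case analysis $t \in (0,1/2]$, $t \in [1/2,1]$, $t \ge 1$ cleanly rather than any analytic difficulty; once stated this way the constants $\czerol,\czerou,\conel$ are obtained by taking the worst of finitely many regimes.
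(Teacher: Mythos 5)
Your proposal is correct and follows essentially the same route as the paper: both derive the doubling bound on $(0,1/2]$ from the two-sided bounds on $t\gamma'(t)/\gamma(t)$ (the paper via the mean value theorem applied to $h=\ln\gamma$, you via the equivalent integral), transfer it to $[1,\infty)$ through the inversion symmetry \eqref{201124e2-3}, and treat the gluing window $[1/2,1]$ separately (the paper by the explicit bound $\gamma(t)\gamma(2^{-1}t^{-1})\le\gamma(3/4)<1$, you by continuity and compactness of the strictly-greater-than-one ratio — a cosmetic difference). Your part (ii), reducing to the positivity of $t\gamma''(t)/\gamma'(t)$ via the quotient of the two logarithmic-derivative ratios and the convexity normalization, is exactly the argument the paper omits as "essentially the same."
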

	\begin{proof}
		
% 		We start with the proof of (i). By condition (ii) of Theorem \ref{f19}, we obtain
% 		\begin{align*}
% 			\textrm{$\gamma''(t)>0$ for all $t\in(0,1]$ or $\gamma''(t)<0$ for all $t\in(0,1],$}
% 		\end{align*}
% 		and therefore $\gamma'(t)$ is monotone on $(0,1]$.
		
		We start with the proof of (i).
		Define $h(t)=\ln \gamma(t)$ for all $t\in(0,1]$, by the conditions (ii) and (iii) of Theorem
		\ref{f19}, we know that there exists $C>1$ such that $th'(t)\in[C^{-1},C]$. Then for all $t\in(0,1/2],$ we have
		\begin{align*}
			\ln \frac{\gamma(2t)}{\gamma(t)}=h(2t)-h(t)=h'(\theta t)t\in [C^{-1}\theta^{-1},C\theta^{-1}]\subset[2^{-1}C^{-1},C]
		\end{align*}
		for some $\theta\in[1,2],$ and therefore
\begin{align}\label{f22}
1<e^{2^{-1}C^{-1}}\le \frac{\gamma(2t)}{\gamma(t)}\le e^{C}
\end{align}
for all $t\in(0,1/2]$. By \eqref{201124e2-3}, we know that \eqref{f22} also holds
for all $t\in[1,\infty)$.
		It remains to consider $t\in[1/2,1]$.
	 By continuity, there exists $t_{0}\in[1/2,1]$ such that $\sup_{t\in[1/2,1]}\gamma(t)\gamma(2^{-1}t^{-1})=\gamma(t_{0})\gamma(2^{-1}t_{0}^{-1})$. Note $t_0$ or $2^{-1}t_0^{-1}$ must less than $3/4$, and $\gamma(1)=1$.
	Then,  $\gamma(t_{0})\gamma(2^{-1}t_{0}^{-1})\leq\gamma(3/4)<1$.
		As a result, for all $t\in[1/2,1]$, we have
		$$\frac{\gamma(2t)}{\gamma(t)}=\frac{1}{\gamma(t)\gamma(2^{-1}t^{-1})}\geq \frac{1}{\gamma(3/4)}>1.$$
Because $\gamma(t)$ is increasing on [0,1], for all $t\in[1/2,1]$, we obtain
		$$\frac{\gamma(2t)}{\gamma(t)}=\frac{1}{\gamma(t)\gamma(2^{-1}t^{-1})}\leq \frac{1}{\gamma(1/2)^{2}}.$$
By setting
\begin{equation}
\czerol=\min\{e^{2^{-1}C^{-1}},\gamma(3/4)^{-1}\}>1 \ \ \textrm{and} \ \ \czerou=\max\{e^{C},\gamma(1/2)^{-2}\}>1,
\end{equation} we can finish the proof of (i). The proof for (ii) is essentially the same as \eqref{f22}, we omit it.
	\end{proof}

Let $l\in \N$, we define
\begin{equation}\label{c2}
    \gamma_{l}(t)=\frac{\gamma(2^{-l}t)}{\gamma(2^{-l})}.
\end{equation}	
In the following Lemma \ref{b44}, we would like to prove some estimates for $\gamma_{l}$ uniformly in $l\in \N$. In the model case $\gamma(t)=t^2$, it always holds that $\gamma_l(t)=t^2$. In the general case, $\gamma$ is not homogeneous anymore, and therefore we need an uniform control over the rescaled versions of $\gamma$.
	
\begin{lemma}\label{b44}
		Let $I=[1/2, 2]$, for $l\in \N$ with $2^l\gg 1$, we then have
		\begin{enumerate}
			\item[\rm(i)] $ |\gamma_{l}^{(k)}(t)|\simeq1$, \ \    for $t\in I$ and $k=0,1,2$;
			
			\item[\rm(ii)] $|\gamma_{l}^{(k)}(t)|\lesssim  1$, \ \ for $t\in I$ and  $3\leq k\leq N$;
			
			\item[\rm(iii)] $|((\gamma_{l}')^{-1})(t)|\simeq 1$, \ \
			for  $t\simeq 1$;
\item[\rm(iv)] $|((\gamma_{l}')^{-1})^{(k)}(t)|\lesssim 1$, \ \
			for  $t\simeq 1$ and $0<k<N$,
		\end{enumerate}
where $(\gamma_{l}')^{-1}$ is the inverse function of $\gamma_{l}'.$ Here the implicit constants depend only on $\gamma$.
	\end{lemma}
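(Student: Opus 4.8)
The strategy is to reduce everything to the estimates $th'(t)\in[C^{-1},C]$ for $h=\ln\gamma$ together with assumption (iii) of Theorem \ref{f19}, after rewriting all quantities in terms of $\gamma$ evaluated at the point $s=2^{-l}t$. For $t\in I=[1/2,2]$ and $2^l\gg 1$ we have $s=2^{-l}t\in(0,1]$ with $s\simeq 2^{-l}$, so we may work entirely inside the original domain where (i)--(iii) of Theorem \ref{f19} and Lemma \ref{c1} apply.

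For part (i): differentiating \eqref{c2} gives $\gamma_l^{(k)}(t)=2^{-lk}\gamma^{(k)}(2^{-l}t)/\gamma(2^{-l})$. For $k=0$ this is $\gamma(2^{-l}t)/\gamma(2^{-l})$, which lies in $[\czerol^{-1},\czerou]$-type bounds by iterating the doubling Lemma \ref{c1}(i) a bounded number of times (since $t\in[1/2,2]$). For $k=1,2$ write $2^{-lk}\gamma^{(k)}(2^{-l}t)/\gamma(2^{-l}) = \big((2^{-l}t)^k\gamma^{(k)}(2^{-l}t)/\gamma(2^{-l}t)\big)\cdot\big(\gamma(2^{-l}t)/\gamma(2^{-l})\big)\cdot t^{-k}$; the first factor is bounded above by (iii) and below by (ii) of Theorem \ref{f19}, the second factor is $\simeq 1$ by doubling, and $t^{-k}\simeq 1$ on $I$. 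This gives $|\gamma_l^{(k)}(t)|\simeq 1$. Part (ii) is identical but uses only the upper bound from (iii) of Theorem \ref{f19}, valid for $3\le k\le N$.

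For parts (iii) and (iv), note that $\gamma_l'(I)$ is an interval of length $\simeq 1$ sitting at height $\simeq 1$ (by part (i) with $k=1$ and the mean value theorem, plus monotonicity of $\gamma_l'$ which follows from convexity of $\gamma$), so $(\gamma_l')^{-1}$ is defined on a fixed-size neighborhood of the relevant values and $(\gamma_l')^{-1}(t)\simeq 1$ there, giving (iii). For (iv), apply the inverse function theorem / Fa\`a di Bruno: the derivatives of $(\gamma_l')^{-1}$ are rational expressions in $\gamma_l^{(2)},\dots,\gamma_l^{(k+1)}$ evaluated at $(\gamma_l')^{-1}(t)$, divided by powers of $\gamma_l^{(2)}$; the numerator is $O(1)$ by parts (i)--(ii) and the denominator is $\simeq 1$ by part (i) with $k=2$, so $|((\gamma_l')^{-1})^{(k)}(t)|\lesssim 1$ for $0<k<N$. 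One should check that the chain of substitutions keeps all arguments in the range where (i)--(ii) apply, but since every point involved is $\simeq 1$ in $I$, this is automatic.

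\textbf{Main obstacle.} The only genuinely delicate point is the lower bounds in part (i) for $k=1,2$: these require assumption (ii) of Theorem \ref{f19} in an essential way — without it $\gamma_l^{(k)}$ could degenerate as $l\to\infty$. Everything else is bookkeeping: carefully tracking that the rescaled point $2^{-l}t$ stays in $(0,1]$, that only boundedly many applications of the doubling condition are needed because $t$ ranges over the fixed compact interval $I$, and that the implicit constants depend only on $\gamma$ (through $C$ in Lemma \ref{c1} and $N$ in assumption (iii)) and not on $l$. The inverse-function computations in (iv) are routine once (i) and (ii) are in hand.
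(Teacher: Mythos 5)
Your proposal is correct and follows essentially the same route as the paper: the $k=0$ case via the doubling condition, the $k=1,2$ cases by factoring out $(2^{-l}t)^k\gamma^{(k)}(2^{-l}t)/\gamma(2^{-l}t)$ and invoking assumptions (ii) and (iii) of Theorem \ref{f19} at the rescaled point $2^{-l}t\in(0,1]$, and the inverse-function statements by the doubling of $\gamma'$ together with Fa\`a di Bruno. The paper is terser on (iii)--(iv) but the underlying computations are the ones you describe.
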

	\begin{proof}
We start with the proof of (i).
Because $\gamma_{l}(t)$ is increasing on $[0,1]$, we have
		\begin{align*}
			\gamma_{l}(t)\leq \frac{\gamma(2^{-l+1})}{\gamma(2^{-l})}\leq C_{0,U}\ \ \textrm{and }\ \
			\gamma_{l}(t)\geq \frac{\gamma(2^{-l-1})}{\gamma(2^{-l})}\geq C_{0,U}^{-1}.
		\end{align*}
This finishes the proof of (i) with $k=0.$ Next for $k=1,2,$
		by the conditions (ii) and (iii) of Theorem
		\ref{f19}, we conclude that
		\begin{align}\label{f23}
			|t^{k}\gamma_{l}^{(k)}(t)|=\bigg|\frac{(2^{-l}t)^{k}\gamma^{(k)}(2^{-l}t)}{\gamma(2^{-l})}\bigg|\simeq \bigg|\frac{\gamma(2^{-l}t)}{\gamma(2^{-l})}\bigg|=|\gamma_{l}(t)|.
		\end{align}
		Then we have
$|\gamma_{l}^{(k)}(t)|\simeq |\frac{\gamma_{l}(t)}{t^{k}} |\simeq1.$ This finishes the proof of (i).
		
		Next we prove (ii). By the condition (iii) of Theorem
		\ref{f19}, it follows that
		\begin{align*}
			|t^{k}\gamma_{l}^{(k)}(t)|=\bigg|\frac{(2^{-l}t)^{k}\gamma^{(k)}(2^{-l}t)}{\gamma(2^{-l})}\bigg|\lesssim \bigg|\frac{\gamma(2^{-l}t)}{\gamma(2^{-l})}\bigg|=|\gamma_{l}(t)|,
		\end{align*}
		which further leads to
$|\gamma_{l}^{(k)}(t)|\lesssim|\gamma_{l}(t)|t^{-k}\simeq1.$
		
		Finally, we prove (iii) and (iv). The case $k=0$ follows from the doubling property of $\gamma'$ in Lemma \ref{c1}.  The case $k>0$ follows from (i), (ii) and (iii) via elementary computations involving inverse functions, and we leave out the details.
	\end{proof}

	\begin{lemma}\label{a21}
	 Under the above assumptions on $\gamma$, we obtain
		\begin{enumerate}
			\item[\rm(i)] For each $C>0,$ we have
			\begin{align}\label{c40}
				\gamma^{-1}(C t)\lesssim_C \gamma^{-1}(t)
			\end{align}
  for all $t\geq0$;
%			\item[\rm(ii)] there exists a positive constant $C_{-1,L}>1$ so that for all $j\geq0,$ $t>0$
%			\begin{align}\label{c3}
%				\gamma^{-1}(t2^{-j})\lesssim C_{-1,L}^{-j}\gamma^{-1}(t);
%			\end{align}
		
%			\item[\rm(iii)] \red{we have for all $a,b>0$ and $\gamma(a)\gamma(b)\leq1,$
%			\begin{align}\label{c5}
%				(ab)^{\log_{2}C_{0,U}}\lesssim \gamma(a)\gamma(b),
%			\end{align}
%and for all $a,b>0$ and $\gamma(a)\gamma(b)\geq1,$
%			\begin{align}\label{c6}
%				\gamma(a)\gamma(b)\lesssim (ab)^{\log_{2}C_{0,U}};
%			\end{align}}
			\item[\rm(ii)] There exist constants $c_{1},c_{2}>0$
			so that
			\begin{align}\label{21-312-210}
				\gamma^{-1}(2^{-k})\gamma^{-1}(2^{-j})\lesssim 2^{-c_{1}(k+j)}
			\end{align}
 for all $j,k\in \mathbb{Z}$ with $j+k\geq0,$
and
			\begin{align}\label{21-312-211}
	2^{-c_{2}(k+j)}\lesssim\gamma^{-1}(2^{-k})\gamma^{-1}(2^{-j})
			\end{align}
for all $j,k\in \mathbb{Z}$ with $j+k\leq0,$
			where $c_{1},c_{2}$ depend only on $C_{0,U}$.
		\end{enumerate}
		
	\end{lemma}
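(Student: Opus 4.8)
\textbf{Proof proposal for Lemma \ref{a21}.}

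The plan is to deduce everything from the doubling property of $\gamma$ on $\R^+$ established in Lemma \ref{c1}(i), which after inversion becomes a doubling property for $\gamma^{-1}$. Indeed, from $\czerol \le \gamma(2t)/\gamma(t)\le \czerou$ one gets, setting $s=\gamma(t)$, the two-sided bound $\gamma^{-1}(\czerou s)\le 2\gamma^{-1}(s)$ and $\gamma^{-1}(\czerol s)\ge 2^{-1}\gamma^{-1}(s)$ for all $s\ge 0$; more generally, iterating $m$ times, $\gamma^{-1}(\czerou^{m} s)\le 2^{m}\gamma^{-1}(s)$. For part (i), given $C>0$ pick $m$ with $\czerou^{m}\ge C$; then $\gamma^{-1}(Ct)\le \gamma^{-1}(\czerou^{m}t)\le 2^{m}\gamma^{-1}(t)$ since $\gamma^{-1}$ is increasing, which is \eqref{c40} with implicit constant $2^{m}\lesssim_C 1$.

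For part (ii), the idea is that the doubling estimate translates into a power-type comparison between $\gamma^{-1}(2^{-k})$ and $2^{-k}$. Precisely, from $\gamma^{-1}(2s)\le \gamma^{-1}(\czerou^{m_0}s)\le 2^{m_0}\gamma^{-1}(s)$ (choosing $m_0$ with $\czerou^{m_0}\ge 2$) one obtains, by iteration in $k\ge 0$, $\gamma^{-1}(2^{-k})\gtrsim 2^{-m_0 k}$ and, using the lower doubling bound similarly, $\gamma^{-1}(2^{-k})\lesssim 2^{-k/m_1}$ for a suitable $m_1$ (and symmetric statements for $k\le 0$ via \eqref{201124e2-3}, which gives $\gamma^{-1}(2^{k})=1/\gamma^{-1}(2^{-k})$). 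Taking the product, for $j+k\ge 0$ we split into the cases where both $j,k\ge 0$, or one is negative; in every case one bounds $\gamma^{-1}(2^{-k})\gamma^{-1}(2^{-j})$ above by $2^{-c_1(k+j)}$ after absorbing the contribution of the negative index using the matching lower bound of the opposite type. The constant $c_1$ is $1/m_1$ where $\czerou^{m_1}\ge 2$, hence depends only on $\czerou$, i.e.\ only on $C_{0,U}$. The lower bound \eqref{21-312-211} for $j+k\le 0$ is entirely dual: apply \eqref{21-312-210} to $-j,-k$ and invert using $\gamma^{-1}(2^{-k})\gamma^{-1}(2^{k})\simeq 1$, which follows from \eqref{c40} applied with a fixed constant together with \eqref{201124e2-3}.

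I expect the only mildly delicate point to be the bookkeeping in the mixed-sign cases of part (ii): when, say, $k\ge 0$ but $j<0$ with $j+k\ge 0$, one has $\gamma^{-1}(2^{-j})=1/\gamma^{-1}(2^{j})\le 2^{m_1 j}\cdot(\text{const})$ is \emph{large}, and it must be controlled by the smallness of $\gamma^{-1}(2^{-k})\lesssim 2^{-k/m_1}$; the inequality $k/m_1 - m_1 j \ge c_1(k+j)$ holds for a small enough $c_1$ precisely because $k\ge -j\ge 0$, so one needs to check the elementary comparison of linear forms carefully but it presents no real obstacle. All implicit constants are manifestly functions of $C_{0,L},C_{0,U}$ (equivalently of $\gamma$ through Lemma \ref{c1}), and in particular $c_1,c_2$ depend only on $C_{0,U}$ as claimed.
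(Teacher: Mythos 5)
Your overall strategy --- transferring the doubling property of $\gamma$ to $\gamma^{-1}$ and then arguing case by case --- is reasonable, but as written it contains a sign confusion and, more seriously, a step that is false. First, the inverted doubling inequalities go the other way: from $\czerol\le\gamma(2t)/\gamma(t)\le\czerou$ and monotonicity one gets $\gamma^{-1}(\czerol s)\le 2\gamma^{-1}(s)$ and $\gamma^{-1}(\czerou s)\ge 2\gamma^{-1}(s)$, not the inequalities you state. For part (i) this is harmless: choose $m$ with $\czerol^{m}\ge C$ (possible since $\czerol>1$) and iterate the first inequality; this matches the paper, which disposes of (i) in one line.

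The genuine gap is in the mixed-sign case of part (ii). The correctly derived absolute bounds are $\gamma^{-1}(2^{-k})\lesssim 2^{-kb}$ for $k\ge 0$ with $b=1/\log_2\czerou$, but $\gamma^{-1}(2^{-j})=1/\gamma^{-1}(2^{-|j|})\lesssim 2^{|j|a}$ for $j\le 0$ with the \emph{larger} exponent $a=1/\log_2\czerol$: after inversion the bad exponent lands on the negative index. The product bound is then $2^{-kb+|j|a}$, and the linear inequality you would need, $kb-|j|a\ge c_1(k-|j|)$ for all $k\ge|j|\ge 0$, fails for every $c_1>0$: at $k=|j|$ the left side equals $-|j|(a-b)\to-\infty$ while the right side is $0$. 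So multiplying the two one-sided bounds cannot work; the ``elementary comparison of linear forms'' you defer is exactly where the argument breaks. The repair is to use a \emph{relative} doubling estimate rather than two absolute ones: for $k\ge 0\ge j$ with $k+j\ge 0$, write $\gamma^{-1}(2^{-k})\gamma^{-1}(2^{-j})=\gamma^{-1}(2^{-(k+j)}\cdot 2^{-|j|})/\gamma^{-1}(2^{-|j|})$ via \eqref{201124e2-3}, and apply $\gamma^{-1}(s/\czerou)\le\gamma^{-1}(s)/2$ roughly $(k+j)\log_{\czerou}2$ times to get the bound $2^{-(k+j)b}$, recovering $c_1=\log_{\czerou}2$. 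This is precisely the paper's device, carried out there on $\gamma$ itself: in the mixed case it writes $\gamma(2^{j})\gamma(2^{k})=\gamma(2^{k+j}2^{-j})/\gamma(2^{-j})$ and applies the doubling condition $k+j$ times to the ratio, never to the two factors separately. Your treatment of the dual bound \eqref{21-312-211} by symmetry is fine once this is fixed, and agrees with the paper's (omitted) argument.
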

	\begin{proof}
	The first item (i) follows immediately from the doubling property of $\gamma$. We will therefore only present the proof of (ii). Firstly, we will prove $\gamma(2^{j})\gamma(2^{k})\geq C_{0,U}^{j+k}$ for all $j,k\in \mathbb{Z}$ satisfying $\gamma(2^{j})\gamma(2^{k})\leq1$.  It suffices to show $\gamma(2^{j})\gamma(2^{k})\geq \min\{C_{0,L}^{j+k},C_{0,U}^{j+k}\}$ for all $j,k\in \mathbb{Z}$. Furthermore, by symmetry, we should only to show the above inequality for $j\geq k$. There are the following four cases:

If $j,k\geq 0$, we have $\gamma(2^{j})\gamma(2^{k})\geq C_{0,L}^{j}C_{0,L}^{k}\gamma(1)^{2}=C_{0,L}^{j+k}$;

		If $j,k\leq 0$, we have $\gamma(2^{j})\gamma(2^{k})\geq C_{0,U}^{j}C_{0,U}^{k}\gamma(1)^{2}=C_{0,U}^{j+k}$;

If $j\geq 0$, $k\leq 0$ and $j+k\geq0$, we have
		$\gamma(2^{j})\gamma(2^{k})=\frac{\gamma(2^{k+j}2^{-j})}{\gamma(2^{-j})}
		\geq C_{0,L}^{k+j};$
		
If $j\geq 0$, $k\leq 0$ and $j+k\leq0$, we have
		$\gamma(2^{j})\gamma(2^{k})=\frac{\gamma(2^{k+j}2^{-j})}{\gamma(2^{-j})}
		\geq C_{0,U}^{k+j}.$
		
Altogether, we have shown $\gamma(2^{j})\gamma(2^{k})\geq C_{0,U}^{j+k}$ holds for all $j,k\in \mathbb{Z}$ satisfying $\gamma(2^{j})\gamma(2^{k})\leq1$.
		Then, for all $a,b>0$ and $\gamma(a)\gamma(b)\leq1$, we can choose $j,k\in \mathbb{Z}$ such that $a\in[2^{j},2^{j+1})$ and $b\in[2^{k},2^{k+1})$. Furthermore, from the fact that $\gamma(t)$ is increasing on $[0,\infty),$ we conclude that
		\begin{align*}
			\gamma(a)\gamma(b)&\geq\gamma(2^{j})\gamma(2^{k})\geq C_{0,U}^{j+k}
			\simeq C_{0,U}^{j+k+2}
			=(2^{j+1}2^{k+1})^{\log_{2}C_{0,U}}
			\geq (ab)^{\log_{2}C_{0,U}}.
		\end{align*}
		Therefore, for all $a,b>0$ and $\gamma(a)\gamma(b)\leq1,$ we obtain
		$ab\lesssim (\gamma(a)\gamma(b))^{c_{1}}$
		with $c_{1}=\log_{C_{0,U}}2$.
		Let $a=\gamma^{-1}(2^{-k})$ and $b=\gamma^{-1}(2^{-j})$, for all $j,k\in \mathbb{Z}$ and $j+k\geq0,$ we then have
		$\gamma^{-1}(2^{-k})\gamma^{-1}(2^{-j})\lesssim 2^{-c_{1}(k+j)}.$ The proof of \eqref{21-312-211} is similar as \eqref{21-312-210} and we omit it. This finishes the proof of (ii).
	\end{proof}
		
	\begin{definition}\label{a38}
		Let $x=(x_1,x_2)$, we define a function  $\rho(x)$ on $x\in \mathbb{R}^{2}$ in the following way: For all $x\in \mathbb{R}^{2}$,
		\begin{equation*}
			\rho(x)=\begin{cases}\gamma(|x_{1}|^{-1})^{-1}+|x_{2}|, &x_{1}\neq0; \\ |x_{2}|, &x_{1}=0. \end{cases}
		\end{equation*}
		For all $r>0$ and $x_{0}\in \mathbb{R}^{2}$, we also define $B(x_{0},r)=\{x\in \mathbb{R}^{2}:\ \rho(x-x_{0})< r\}$ as a ball centered at $x_{0}$ with radius $r>0$ associated with $\rho$.
	\end{definition}
	We have the following lemma, which says that the function $\rho$ gives rise to a quasi-distance function on $\R^2$.
	\begin{lemma}\label{a12}
One has that the following quasi-triangle inequality
		\begin{align}\label{a11}
			\rho(x+y)\leq C_{0,U}(\rho(x)+\rho(y))
		\end{align}
	holds for all $x,y\in \mathbb{R}^{2}$. Moreover, we have the following doubling condition
		\begin{align}\label{c7}
			|B(x,2r)|\lesssim |B(y,r)|
		\end{align}
holds for all $r>0$ and $x,y\in \mathbb{R}^{2}$. As a result, $(\mathbb{R}^{2},\rho,dx)$ is a homogeneous type space.
		
	\end{lemma}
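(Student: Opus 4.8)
\textbf{Proof plan for Lemma \ref{a12}.}

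The plan is to verify the quasi-triangle inequality \eqref{a11} coordinate by coordinate, exploiting the fact that $\rho$ is a sum of a quantity depending only on $x_1$ and a quantity (namely $|x_2|$) depending only on $x_2$. First I would dispose of the second coordinate: since $|x_2 + y_2| \le |x_2| + |y_2| \le \rho(x) + \rho(y)$, this part contributes at most $\rho(x) + \rho(y)$, which is harmless since $C_{0,U} > 1$. The substantive point is the first-coordinate term $\gamma(|x_1 + y_1|^{-1})^{-1}$. Here I would first note the elementary inequality $|x_1 + y_1|^{-1} \ge \tfrac12 \min(|x_1|^{-1}, |y_1|^{-1})$ (after reducing to the case both $x_1, y_1 \ne 0$; the degenerate cases are immediate from the definition). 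Applying $\gamma$, which is increasing on $[0,\infty)$, together with the doubling bound $\gamma(2t)/\gamma(t) \le C_{0,U}$ from Lemma \ref{c1}(i), gives $\gamma(|x_1+y_1|^{-1}) \ge C_{0,U}^{-1}\gamma(\min(|x_1|^{-1}, |y_1|^{-1})) = C_{0,U}^{-1}\min(\gamma(|x_1|^{-1}), \gamma(|y_1|^{-1}))$, hence $\gamma(|x_1+y_1|^{-1})^{-1} \le C_{0,U}\max(\gamma(|x_1|^{-1})^{-1}, \gamma(|y_1|^{-1})^{-1}) \le C_{0,U}(\rho(x) + \rho(y))$. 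Adding the two coordinate contributions yields \eqref{a11} (with a harmless adjustment of the constant, or by absorbing since $\rho(x)+\rho(y)$ is dominated by $C_{0,U}(\rho(x)+\rho(y))$).

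For the doubling condition \eqref{c7}, the first step is to compute the Lebesgue measure of a ball $B(x_0, r)$. By translation invariance of $dx$ it suffices to treat $x_0 = 0$. The set $B(0,r) = \{(x_1, x_2) : \gamma(|x_1|^{-1})^{-1} + |x_2| < r\}$ (with the convention at $x_1 = 0$) fibers over the $x_1$-axis: for fixed $x_1$ with $\gamma(|x_1|^{-1})^{-1} < r$, the $x_2$-section is an interval of length $2(r - \gamma(|x_1|^{-1})^{-1})$. The constraint $\gamma(|x_1|^{-1})^{-1} < s$ is equivalent to $|x_1| > 1/\gamma^{-1}(1/s)$, i.e. $|x_1|$ ranges over an interval of length $2/\gamma^{-1}(1/s)$. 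Integrating, one finds $|B(0,r)| = \int_0^r \tfrac{2}{\gamma^{-1}(1/(r-s))} \cdot 2\, ds$ up to reparametrization, and the key structural fact is that $|B(0,r)| \simeq r \cdot \gamma^{-1}(1/r)^{-1}$, i.e. it is comparable to $r$ times the reciprocal of $\gamma^{-1}(1/r)$; this comparison uses the doubling property of $\gamma^{-1}$ recorded implicitly via Lemma \ref{c1} (so that $\gamma^{-1}(1/s)$ does not vary too wildly as $s$ ranges over a dyadic interval). Once this formula is in hand, $|B(x, 2r)| \simeq 2r \cdot \gamma^{-1}(1/(2r))^{-1}$ and $|B(y,r)| \simeq r \cdot \gamma^{-1}(1/r)^{-1}$, and the doubling of $\gamma^{-1}$ gives $\gamma^{-1}(1/(2r)) \simeq \gamma^{-1}(1/r)$, so the two are comparable; note the answer is independent of the center, which is why $x$ and $y$ may differ.

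The main obstacle I anticipate is not the quasi-triangle inequality — that is a short computation — but rather pinning down the comparison $|B(0,r)| \simeq r/\gamma^{-1}(1/r)$ cleanly, since it requires a doubling statement for $\gamma^{-1}$ over all scales (both small and large $r$), and one must be careful near $r$ comparable to $1$ and across the regime where the extension \eqref{201124e2-3} takes effect. I would handle this by splitting the integral defining $|B(0,r)|$ into dyadic pieces in the variable $r - s$ and using that on each dyadic piece $\gamma^{-1}$ is comparable to its value at the endpoint; summing the resulting geometric-type series gives the upper bound, and retaining just the piece where $r - s \simeq r$ gives the matching lower bound. The final sentence, that $(\mathbb{R}^2, \rho, dx)$ is a homogeneous type space, then follows by definition from \eqref{a11} and \eqref{c7}.
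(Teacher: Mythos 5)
Your argument for the quasi-triangle inequality \eqref{a11} is essentially the paper's: both reduce to the first coordinate, use $|x_1+y_1|\le 2\max(|x_1|,|y_1|)$, and invoke the doubling bound $\gamma(2t)\le C_{0,U}\gamma(t)$ from Lemma \ref{c1}. For the doubling condition \eqref{c7}, however, you take a genuinely different route. The paper never computes $|B(0,r)|$: it shows instead that the isotropic dilation $z\mapsto z/2^{j}$ satisfies $\rho(z/2^{j})\le\max\{C_{0,L}^{-j},2^{-j}\}\rho(z)$, so for $j$ large enough (fixed, depending only on $C_{0,L}$) one has $B(0,2r)\subseteq 2^{j}B(0,r)$ and hence $|B(0,2r)|\le 4^{j}|B(0,r)|$. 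You instead fiber the ball over the $x_1$-axis and derive $|B(0,r)|=4\int_0^r \gamma^{-1}(1/s)^{-1}\,ds\simeq r/\gamma^{-1}(1/r)$, then conclude by the doubling of $\gamma^{-1}$ (which is exactly Lemma \ref{a21}(i), $\gamma^{-1}(Ct)\lesssim_C\gamma^{-1}(t)$). Your approach is more computational but yields the sharper, reusable fact that $|B(x,r)|\simeq r/\gamma^{-1}(1/r)$ for every center; the paper's dilation trick is shorter and sidesteps any volume formula. One remark on your anticipated difficulty: the dyadic decomposition you propose is not needed — since $\gamma^{-1}$ is increasing, the upper bound $\int_0^r\gamma^{-1}(1/s)^{-1}\,ds\le r/\gamma^{-1}(1/r)$ is immediate, and restricting to $s\in[r/2,r]$ plus one application of Lemma \ref{a21}(i) gives the matching lower bound; no issue arises near $r\simeq 1$ because the extension \eqref{201124e2-3} makes $\gamma$ globally increasing and doubling on $(0,\infty)$. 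Also note a sign slip: $\gamma(|x_1|^{-1})^{-1}<s$ is equivalent to $|x_1|<1/\gamma^{-1}(1/s)$, not $>$; the interval length $2/\gamma^{-1}(1/s)$ you then use is the correct one.
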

\begin{proof}
We first prove \eqref{a11}. Let $x=(x_1, x_2)$ and $y=(y_1, y_2)$, by Definition \ref{a38}, it suffices to show
$$\gamma(|x_{1}+y_{1}|^{-1})^{-1}\leq C_{0,U}\big( \gamma(|x_{1}|^{-1})^{-1}+\gamma(|y_{1}|^{-1})^{-1}\big).$$
By monotonicity of $\gamma$, it is enough to prove
$$\gamma\big((|x_{1}|+|y_{1}|)^{-1}\big)^{-1}\leq C_{0,U}\big( \gamma(|x_{1}|^{-1})^{-1}+\gamma(|y_{1}|^{-1})^{-1}\big).$$
Without loss of generality, we assume $|x_{1}|\geq|y_{1}|$. Then
\begin{align*}
\gamma\big((|x_{1}|+|y_{1}|)^{-1}\big)^{-1}&\leq \gamma\big((2|x_{1}|)^{-1}\big)^{-1}\leq C_{0,U} \gamma\big(|x_{1}|^{-1}\big)^{-1}
\leq C_{0,U}\big(\gamma(|x_{1}|^{-1})^{-1}+\gamma(|y_{1}|^{-1})^{-1}\big),
\end{align*}
where we used the doubling property of $\gamma$ in Lemma \ref{curve gamma_z}. This finishes the proof of \eqref{a11}.

Next we show \eqref{c7}.
		Without loss of generality, we assume $x=y=0$.
		Fix $z\in \mathbb{R}^{2}$ satisfying $\rho(z)<2r$, it suffices to show there exists a constant $C$ such that
		$\rho(z/C)< r.$
		Note that
		$\rho(z/2^{j})=\gamma(2^{j}|z_{1}|^{-1})^{-1}+2^{-j}|z_{2}|,$ by the doubling property of $\gamma$ in Lemma \ref{curve gamma_z}, we have $\gamma(2^{j}|z_{1}|^{-1})\geq C_{0,L}^{j}\gamma(|z_{1}|^{-1}).$
		Then
		$$\rho(z/2^{j})\leq C_{0,L}^{-j}\gamma(|z_{1}|^{-1})^{-1}+2^{-j}|z_{2}|\leq \max\{C_{0,L}^{-j},2^{-j}\}\rho(z).$$
		Choose $j$ satisfying $\max\{C_{0,L}^{-j},2^{-j}\}\leq 1/2$, which leads to $\rho(z/2^{j})\leq \frac{1}{2}\rho(z)< r.$
	\end{proof}

	\section{dyadic cubes}
	To prove Lemma \ref{a13} in Section 4, we need to obtain Lemma \ref{a14}. To prove Lemma \ref{a14}, we need to construct dyadic cubes on the homogeneous type space $(\mathbb{R}^{2},\rho,dx)$. So we need first the following lemma:
	\begin{lemma}[\cite{MR1096400}]\label{a27}
		Let $(\mathbb{R}^{2},\rho,dx)$ be a homogeneous type space as in Lemma \ref{a12}. Then
		there exist an index set $I_k$ with $k\in \Z$, a collection of open subsets $\{Q^{k}(\alpha)\subset \mathbb{R}^{2}:\ k\in \mathbb{Z}, \alpha\in I_{k}\}$, and constants $\delta\in(0,1)$ and $ C_{\rho}, c_{\rho}>0$, such that
		\begin{enumerate}
			\item[\rm(i)]  For all $k\in \mathbb{Z},$
			$$\bigg|\bigg(\bigcup_{\alpha}Q^{k}(\alpha)\bigg)^{\complement}\bigg|=0; $$
			\item[\rm(ii)] If $l\geq k,$ then for all $\alpha\in I_{k},\beta\in I_{l}$, we have
			$Q^{l}(\beta)\subset Q^{k}(\alpha)$ or $Q^{l}(\beta)\bigcap Q^{k}(\alpha)=\varnothing;$
			\item[\rm(iii)] For all $(k,\alpha)$ and $l<k$, there exists a unique $\beta$ such that $Q^{k}(\alpha)\subset Q^{l}(\beta);$
			\item[\rm(iv)] $\textrm{Diameter}~(Q^{k}(\alpha))\leq C_{\rho}\delta^{k}$ for every $\alpha\in I_k$ and every $k\in \Z$;
			\item[\rm(v)] Each $Q^{k}(\alpha)$ contains a ball of radius $c_{\rho}\delta^{k}$.
		\end{enumerate}
	\end{lemma}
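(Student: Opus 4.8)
The statement is precisely M.\ Christ's dyadic cube construction on a space of homogeneous type (\cite{MR1096400}), and Lemma \ref{a12} has already verified the two hypotheses needed: the quasi-triangle inequality \eqref{a11} with constant $C_{0,U}$ and the doubling property \eqref{c7}. So the plan is to recall that construction on the particular space $(\mathbb{R}^2,\rho,dx)$ and to check that the constants it produces ($\delta$, $C_\rho$, $c_\rho$) depend only on $C_{0,U}$ and the doubling constant. The single structural input driving everything is a smallness condition on $\delta$: one fixes $\delta\in(0,1)$ with $C_{0,U}\delta<1$, and this is what makes all the geometric series below converge.

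For each $k\in\mathbb{Z}$, choose (greedily, or by Zorn's lemma) a \emph{maximal} $\delta^k$-separated set $\{z^k_\alpha\}_{\alpha\in I_k}\subset\mathbb{R}^2$, i.e.\ $\rho(z^k_\alpha-z^k_\beta)\geq\delta^k$ whenever $\alpha\neq\beta$. By the doubling property \eqref{c7} the index set $I_k$ is at most countable and locally finite, the balls $B(z^k_\alpha,\delta^k/2)$ are pairwise disjoint, and by maximality the balls $\{B(z^k_\alpha,C_{0,U}\delta^k)\}_\alpha$ cover $\mathbb{R}^2$. Next build a tree on $\bigsqcup_k I_k$: for each $\beta\in I_{k+1}$ declare its parent at level $k$ to be an $\alpha\in I_k$ minimizing $\rho(z^{k+1}_\beta-z^k_\alpha)$ (ties broken by a fixed well-ordering of $I_k$), so that composing the parent maps gives, for every $l>k$ and $\beta\in I_l$, a unique ancestor $\alpha(l,\beta,k)\in I_k$; the covering property gives $\rho\big(z^{j+1}_\beta-z^j_{\mathrm{parent}(\beta)}\big)\leq C_{0,U}\delta^j$. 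Then set
$$\widehat Q^{k}(\alpha)=\bigcup_{l\geq k}\ \bigcup_{\substack{\beta\in I_l\\ \alpha(l,\beta,k)=\alpha}}B\big(z^l_\beta,\,c_0\delta^l\big)$$
for a suitably small fixed $c_0$, and define $Q^{k}(\alpha)$ to be the interior of $\overline{\widehat Q^{k}(\alpha)}$.

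It remains to verify (i)--(v). Properties (ii) and (iii) are immediate from the tree: if $\alpha(l,\beta,k)=\alpha$ then every descendant of $(l,\beta)$ is a descendant of $(k,\alpha)$, so $\widehat Q^{l}(\beta)\subset\widehat Q^{k}(\alpha)$, and otherwise the defining families of balls come from disjoint subtrees. The heart of (iv) and (v) is the distance estimate: iterating the quasi-triangle inequality \eqref{a11} along the ancestor chain of $(l,\beta)$ up to $(k,\alpha)$ and using $\rho\big(z^{j+1}_\bullet-z^j_\bullet\big)\lesssim\delta^j$ gives
$$\rho\big(z^l_\beta-z^k_\alpha\big)\ \lesssim\ \sum_{j=k}^{l-1}C_{0,U}^{\,j-k+1}\delta^{j}\ \lesssim\ \delta^{k}\sum_{i\geq0}(C_{0,U}\delta)^{i}\ \lesssim\ \delta^{k},$$
which yields $\mathrm{Diameter}(Q^{k}(\alpha))\lesssim\delta^k$, i.e.\ (iv). For (v), a dual estimate shows that if $\rho(x-z^k_\alpha)<c_\rho\delta^k$ with $c_\rho$ small then the level-$l$ center nearest to $x$ has $(k,\alpha)$ as an ancestor for all $l\geq k$ (the chosen parent chain stays $O(\delta^k)$-close to $x$, while all other level-$k$ centers are $\delta^k$-far from $z^k_\alpha$), so $x\in\overline{\widehat Q^{k}(\alpha)}$ and hence $B(z^k_\alpha,c_\rho\delta^k)\subset Q^{k}(\alpha)$. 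For (i), any $x$ lies in some $B(z^k_\alpha,C_{0,U}\delta^k)$ and is a limit of centers whose level-$k$ ancestor chains eventually stabilize to some $\alpha_0$, so $x\in\overline{\widehat Q^{k}(\alpha_0)}$; the exceptional set where this fails is a countable union of ``boundary'' sets of Lebesgue measure zero. The main obstacle is the bookkeeping in the displayed distance bound: a naïve generation-by-generation application of \eqref{a11} produces a factor $C_{0,U}^{\,l-k}$ blowing up as $l\to\infty$, and the construction works only because the iteration is organized as one geometric series in $C_{0,U}\delta$ with $\delta$ chosen small depending only on $C_{0,U}$; the measure-zero boundary argument in (i) is the sole other delicate point, and it is handled exactly as in \cite{MR1096400}.
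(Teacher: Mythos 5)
First, a point of comparison: the paper does not prove this lemma at all — it is quoted directly from Christ \cite{MR1096400}, with Lemma \ref{a12} supplying the two hypotheses (the quasi-triangle inequality \eqref{a11} and the doubling condition \eqref{c7}). So your task was to reconstruct Christ's construction, and your skeleton is the right one: maximal $\delta^{k}$-separated nets, a parent map, the condition $C_{0,U}\delta<1$ driving the geometric series, the diameter bound (iv) from the telescoped quasi-triangle inequality, and (v) directly from the fact that $(k,\alpha)$ is its own descendant. You also correctly flag that the null-boundary argument behind (i) is delicate and lives in \cite{MR1096400}. (A minor slip along the way: with quasi-triangle constant $C_{0,U}>1$, the balls $B(z^{k}_{\alpha},\delta^{k}/2)$ around a $\delta^{k}$-separated set need not be pairwise disjoint; the radius must be taken $\le \delta^{k}/(2C_{0,U})$.)

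The genuine gap is property (ii), which you dismiss as ``immediate from the tree.'' For the definition $\widehat Q^{k}(\alpha)=\bigcup B(z^{l}_{\beta},c_{0}\delta^{l})$ over descendants, disjointness of the subtrees does \emph{not} imply disjointness of the cubes: centers at different levels carry no mutual separation, and near the interface between two level-$k$ cubes a level-$m'$ center descending from $\alpha'$ can lie within $O(c_{0}\delta^{m})$ of a level-$m$ center descending from $\alpha$ (the nearest-parent chains drift by $O(\delta^{j})$ at each generation $j$, which is comparable to the level-$j$ separation), so the corresponding balls intersect no matter how small $c_{0}$ is; passing to interiors of closures does not repair this, and the uniqueness in (iii) fails with it. Making the generations genuinely disjoint while retaining (i), (iv), (v) is precisely the non-trivial content of \cite{MR1096400}: Christ works instead with the closed sets $\overline{\{z^{l}_{\beta}:(l,\beta)\ \text{a descendant of}\ (k,\alpha)\}}$, proves by an iteration over scales (using only doubling) a quantitative small-boundary-layer estimate of the form $|\{x\in Q:\ \rho\text{-dist}(x,Q^{\complement})\le t\delta^{k}\}|\lesssim t^{\eta}|Q|$, and deduces simultaneously that the mutual overlaps are Lebesgue-null and that the complement in (i) is null, after which the open cubes are obtained by discarding that null set. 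In short, the two points you treat as routine — (i) and (ii) — are jointly the theorem; everything you do prove carefully is the easy half.
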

%	\begin{proof}
%		The lemma follows from \cite{MR1096400} except we need to verify the index set $I_{k}$ is countable for all $k\in \mathbb{Z}.$
%		
%		Fix $k\in \mathbb{Z},$ it suffices to show for all $R>0$, the ball $|x|<R$ contains at most finite $Q^{k}(\alpha)$ which follows from (v) and the fact $|B(z_{\alpha}^{k},c_{\rho}\delta^{k})|\gtrsim1$ where the implicit constant is independent of $z_{\alpha}^{k}$.
%	\end{proof}

		Let $Q=Q^k(\alpha)$ be a dyadic cube constructed in Lemma \ref{a27}. We will use $r=r(Q)$ to denote $k \log_2 \delta$. Moreover, denote $Q^{*}=B(z_{\alpha}^{k},2C_{0,U}C_{\rho}\delta^{k})$, where $z^k_{\alpha}$ is an arbitrary point in $Q$. The choice of the parameters in $Q^*$ guarantees that $Q\subset Q^*$. For later use, we collect a few properties about the dyadic cubes constructed above.
%
% as follows.
%		Suppose $Q=Q^{k}(\alpha)$, by Lemma \ref{a27}, $d(Q)\leq C_{\rho}\delta^{k}$ for some $k\in \mathbb{Z},$ and we define $r=k\log_{2}\delta$.
%		
%		And we also define $Q^{*}$ as follows.
%		By (v) of Lemma \ref{a27}, $Q=Q^{k}(\alpha)$ contains a ball $B(z_{\alpha}^{k},c_{\rho}\delta^{k}),$ we define $Q^{*}=B(z_{\alpha}^{k},2C_{0,U}C_{\rho}\delta^{k})$, where $C_{0,U}$ is the doubling constant in (ii) of Lemma \ref{c1}.

	\begin{lemma}\label{d5}
		For every dyadic cube $Q$, the following hold:
		\begin{enumerate}
			\item[\rm(i)]  If $x\in Q$ and $y\in (Q^{*})^{\complement},$ then $\rho(y-x)\gtrsim 2^{r(Q)}$;
			\item[\rm(ii)] $|Q^{*}|\lesssim |Q|$;
			\item[\rm(iii)] $|Q^{k-1}(\alpha)|\lesssim |Q^{k}(\beta)|\ $ for all $k\in \Z,$ $\alpha\in I_{k-1}$ and $\beta\in I_{k}$.
		\end{enumerate}
	\end{lemma}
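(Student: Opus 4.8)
The plan is to verify the three items directly from the construction in Lemma \ref{a27}, the quasi-triangle inequality \eqref{a11}, and the doubling condition \eqref{c7}, exploiting the relation $2^{r(Q)}=\delta^{k}$ for $Q=Q^{k}(\alpha)$.

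For (i), fix $x\in Q$ and $y\notin Q^{*}$. By property (iv) of Lemma \ref{a27} and the definition of $Q^{*}=B(z^{k}_{\alpha},2C_{0,U}C_{\rho}\delta^{k})$, we have $\rho(x-z^{k}_{\alpha})\le C_{\rho}\delta^{k}$ while $\rho(y-z^{k}_{\alpha})\ge 2C_{0,U}C_{\rho}\delta^{k}$. Writing $y-z^{k}_{\alpha}=(y-x)+(x-z^{k}_{\alpha})$ and applying \eqref{a11},
\[
2C_{0,U}C_{\rho}\delta^{k}\le \rho(y-z^{k}_{\alpha})\le C_{0,U}\big(\rho(y-x)+\rho(x-z^{k}_{\alpha})\big)\le C_{0,U}\rho(y-x)+C_{0,U}C_{\rho}\delta^{k},
\]
so $\rho(y-x)\ge C_{\rho}\delta^{k}=C_{\rho}\,2^{r(Q)}$, which is the claimed bound (here I should double check that $Q^{*}$ is defined so that $Q\subset Q^{*}$, which is exactly the role of the factor $2C_{0,U}$). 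Item (ii) is immediate from the doubling condition: $Q^{*}$ is a ball of radius $\simeq\delta^{k}$ while, by property (v) of Lemma \ref{a27}, $Q$ contains a ball of radius $c_{\rho}\delta^{k}$ with the same center up to the quasi-metric; iterating \eqref{c7} a bounded (depending only on $\gamma$, via $C_{0,U}$, $C_{\rho}$, $c_{\rho}$) number of times gives $|Q^{*}|\lesssim |Q|$.

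For (iii), let $\alpha\in I_{k-1}$, $\beta\in I_{k}$. By property (iv), $Q^{k-1}(\alpha)$ is contained in a $\rho$-ball of radius $C_{\rho}\delta^{k-1}$, hence $|Q^{k-1}(\alpha)|\le |B(\cdot,C_{\rho}\delta^{k-1})|$; by property (v), $Q^{k}(\beta)$ contains a $\rho$-ball of radius $c_{\rho}\delta^{k}$, hence $|Q^{k}(\beta)|\ge |B(\cdot,c_{\rho}\delta^{k})|$. Since $\delta^{k-1}=\delta^{-1}\delta^{k}$ and $\delta^{-1}$ is a fixed constant, \eqref{c7} (applied a fixed number of times to absorb the ratio $C_{\rho}\delta^{-1}/c_{\rho}$) yields $|B(\cdot,C_{\rho}\delta^{k-1})|\lesssim |B(\cdot,c_{\rho}\delta^{k})|$, and combining the displays gives (iii).

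The only mildly delicate point is bookkeeping the constants: every estimate must depend on $\gamma$ only through $C_{0,U}$, $\delta$, $C_{\rho}$, $c_{\rho}$ and not on $k$, so one has to make sure the number of doubling iterations in (ii) and (iii) is bounded uniformly in $k$ — which it is, since all radii ratios involved are $k$-independent. I expect no real obstacle beyond this; the lemma is essentially a repackaging of the standard dyadic-cube properties in the homogeneous space $(\mathbb{R}^{2},\rho,dx)$.
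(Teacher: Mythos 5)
Your proof is correct and follows essentially the same route as the paper: item (i) is the same quasi-triangle computation with the center $z^{k}_{\alpha}$ (the paper writes it as a reverse triangle inequality, you as a forward one — equivalent), and items (ii)–(iii) are exactly the iterated-doubling argument the paper invokes without writing out. Note that the strong form of \eqref{c7} (allowing different centers $x,y$) is what justifies your comparison of balls with different centers, as you implicitly use.
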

	\begin{proof}
Recall that the definition of $Q^*$ and the notation $z^{k}_{\alpha}$, by Lemma \ref{a12}, we have
		\begin{align*}
			\rho(y-x)\geq \frac{1}{C_{0,U}}\rho(y-z^k_{\alpha})-\rho(x-z^k_{\alpha})\geq\frac{1}{C_{0,U}}2C_{0,U}C_{\rho}\delta^{k}-C_{\rho}\delta^{k}\geq C_{\rho}\delta^{k}=C_{\rho}2^{r(Q)}.
		\end{align*}
		This finishes the proof of (i). The other two statements follow immediately from \eqref{c7} and the properties (iv) and (v) in Lemma \ref{a27}.
%		Next we prove (ii). Suppose $2^{m}=\frac{2C_{0,U}C_{\rho}}{c_{\rho}}+2$ for some $m\geq1.$ Then by Lemma \ref{a12}, we have
%		\begin{align}\label{d6}
%			|Q^{*}|=|B(z_{\alpha}^{k},2C_{0,U}C_{\rho}\delta^{k})|\leq |B(z_{\alpha}^{k},2^{m}c_{\rho}\delta^{k})|\leq C_{\gamma}^{m}|B(z_{\alpha}^{k},c_{\rho}\delta^{k})|\leq C_{\gamma}^{m}|Q|.
%		\end{align}
%		 Here $C_{\gamma}$ denotes the constant depending only on $\gamma$.
%		This finishes the proof of (ii).
%
%		Next we prove (iii). By Lemma \ref{a27}, we have $Q^{k-1}(\alpha)\subset B(z_{\alpha}^{k-1},C_{\rho}\delta^{k-1})$ and $B(z_{\beta}^{k},c_{\rho}\delta^{k})\subset Q^{k}(\beta).$
%		It suffices to show $|B(z_{\alpha}^{k-1},C_{\rho}\delta^{k-1})|\lesssim |B(z_{\beta}^{k},c_{\rho}\delta^{k})|.$
%		Let $2^{l}=\frac{C_{\rho}}{\delta c_{\rho}}+2$ for some $l\geq1$, similar as \eqref{d6}, we have
%		\begin{align*}
%			|B(z_{\alpha}^{k-1},C_{\rho}\delta^{k-1})|\leq |B(z_{\alpha}^{k-1},2^{l}c_{\rho}\delta^{k})|\leq C_{\gamma}^{l}|B(z_{\beta}^{k},c_{\rho}\delta^{k})|.
%		\end{align*}
	\end{proof}

		Let $(\mathbb{R}^{2},\rho,dx)$ be a homogeneous type space as in Lemma \ref{a12} and $B$ be a Banach space with norm $\|\cdot \|$. Suppose $(X,d\mu)$ is a measure space and let $1\leq p<\infty$, we define $L^{p}(X,d\mu,B)$ as the collection of all $B$-valued measurable function $f$ satisfying $\int_X \|f(x)\|^{p}\,d\mu(x)<\infty$.
	Then we define
	\begin{align*}
		\|f\|_{L^{p}(X,d\mu,B)}=\bigg(\int_X \|f(x)\|^{p}\,d\mu(x)\bigg)^{1/p}
	\end{align*}
	and define $L^{\infty}(X,d\mu,B)$ by the usual sense. We define $L_{\textrm{loc}}^{1}(\mathbb{R}^{2},B)$ as the space of functions that lie in $L^{1}(K,B)$, where $K$ takes over all compact sets in $\mathbb{R}^{2}$. We ignore the measure $d\mu$ so that we always use notation $L^{p}(X,B)$.
	We define $C(X,B)$ as the collection of all $B$-valued measurable continuous function $f$ on $X$. For
 $f\in L_{\textrm{loc}}^{1}(\mathbb{R}^{2},B)$, we define
		\begin{align}\label{c8}
			M_{\textrm{dyad}}f(x)=\sup_{x\in Q}\fint_{Q}\|f(y)\| \,dy,
		\end{align}
		where the supremum takes over all dyadic cubes in Lemma \ref{a27}. In the rest of this section, we state a few useful lemmas; their proofs are standard and are therefore left out. One can read \cite{MR1232192} for details.

	\begin{lemma}\label{c25}
		Let $f\in C(\mathbb{R}^{2},B)$, then we have
		\begin{align*}
			\textrm{$\|f(x)\|\leq M_{\textrm{dyad}}f(x), \ $ a.e. \ $x\in \mathbb{R}^{2}$.}
		\end{align*}
	\end{lemma}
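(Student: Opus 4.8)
The statement to prove is Lemma~\ref{c25}: for a continuous $B$-valued function $f$ on $\mathbb{R}^2$, one has $\|f(x)\| \leq M_{\textrm{dyad}}f(x)$ for almost every $x$. This is the standard Lebesgue-differentiation-type lower bound for the dyadic maximal function, transported to the homogeneous type space $(\mathbb{R}^2,\rho,dx)$ with the dyadic cubes of Lemma~\ref{a27}.

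\medskip

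\emph{Plan.} The plan is to show that for \emph{every} $x$ at which the dyadic cubes "shrink nicely" to $x$ — and this is every $x$, by property~(v) of Lemma~\ref{a27} together with the doubling of $(\mathbb{R}^2,\rho,dx)$ — the averages $\fint_{Q}\|f(y)\|\,dy$ over dyadic cubes $Q\ni x$ with $r(Q)\to-\infty$ converge to $\|f(x)\|$, whence the supremum defining $M_{\textrm{dyad}}f(x)$ is at least $\|f(x)\|$. Concretely: first I would fix $x\in\mathbb{R}^2$ and $\varepsilon>0$; by continuity of $f$ at $x$ there is $\delta_0>0$ with $\|f(y)-f(x)\|<\varepsilon$ whenever $\rho(y-x)<\delta_0$ (note $\rho$ generates the Euclidean topology here, so continuity in the usual sense suffices). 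Second, for each $k\in\mathbb{Z}$ pick, by property~(i) of Lemma~\ref{a27}, a dyadic cube $Q=Q^k(\alpha)\ni x$ (valid for a.e.\ $x$, and in fact we only need it for a.e.\ $x$, which is all the conclusion asks). By property~(iv), $\textrm{Diameter}(Q)\leq C_\rho\delta^k$, so for $k$ large enough (i.e.\ $\delta^k$ small), every $y\in Q$ satisfies $\rho(y-x)\leq C_\rho\delta^k<\delta_0$. Third, estimate
\begin{align*}
\Big|\fint_{Q}\|f(y)\|\,dy - \|f(x)\|\Big| \leq \fint_{Q}\big|\,\|f(y)\|-\|f(x)\|\,\big|\,dy \leq \fint_{Q}\|f(y)-f(x)\|\,dy < \varepsilon,
\end{align*}
using the reverse triangle inequality for the norm $\|\cdot\|$. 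Hence $M_{\textrm{dyad}}f(x)\geq \fint_Q\|f(y)\|\,dy > \|f(x)\|-\varepsilon$, and letting $\varepsilon\to0$ gives $M_{\textrm{dyad}}f(x)\geq\|f(x)\|$.

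\medskip

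\emph{On the "a.e." and the main obstacle.} Strictly, property~(i) of Lemma~\ref{a27} only guarantees that $\bigcup_\alpha Q^k(\alpha)$ exhausts $\mathbb{R}^2$ up to a null set for each fixed $k$; so the set $N_k$ of points not covered at level $k$ is null, and on $N:=\bigcup_{k}N_k$ (still null) we cannot run the argument. For $x\notin N$ the argument above applies verbatim, giving the conclusion off the null set $N$ — which is exactly "a.e." Since $B$ is a general Banach space, the only facts about $\|\cdot\|$ used are the triangle and reverse-triangle inequalities, so no separability or Bochner-measurability subtleties intrude beyond the hypothesis that $f\in C(\mathbb{R}^2,B)$ (which already builds in the needed measurability). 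I do not expect a genuine obstacle here: the one point requiring a word of care is that $r(Q)=k\log_2\delta\to-\infty$ corresponds to $k\to+\infty$ and hence $\delta^k\to0$ (since $\delta\in(0,1)$), so the cubes through $x$ do shrink; this is why property~(iv) rather than~(v) is the relevant one for the upper bound on the diameter. As the excerpt itself notes, the proof is standard and follows the model in \cite{MR1232192}; I would present only the short computation above.
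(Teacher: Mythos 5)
Your proof is correct and is exactly the standard Lebesgue-differentiation argument that the paper itself declines to write out (it merely refers to \cite{MR1232192} for these routine lemmas): continuity plus the shrinking of the dyadic cubes through a.e.\ point forces the averages to converge to $\|f(x)\|$. Your side remarks are also accurate — $\rho$ does generate the Euclidean topology, the exceptional null set is $\bigcup_k N_k$ from property (i) of Lemma~\ref{a27}, and property (iv) (not (v)) is what makes the cubes shrink — so there is nothing to add.
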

%	\begin{proof}
%		For a.e. $x\in \mathbb{R}^{2}$,
%		we can choose a sequence of dyadic cubes $\{Q_{j}'\}_{j\geq1}$, such that
%		$x\in Q_{j}'$ for all $j\geq1$ and $\lim_{j\rightarrow\infty}d(Q_{j}')=0$.
%		Because f is continuous, we have $\lim_{j\rightarrow\infty}\fint_{Q_{j}'}f(y)dy=f(x)$.
%		Then we have
%		\begin{align*}
%			\|f(x)\|\leq \sup_{j}\fint_{Q_{j}'}\|f(y)\|dy\leq M_{dyad}f(x).
%		\end{align*}
%	\end{proof}
	To prove the following Lemma \ref{a14}, we also need the following Calder$\acute{\textrm{o}}$n-Zygmund decomposition.
	\begin{lemma}\label{a28}
Let $f\in L^{p}(\mathbb{R}^{2},B) $ for some $1\leq p<\infty$. Suppose $\alpha>0$ and define
		\begin{align*}
			\Omega_{\alpha}=\{x\in \mathbb{R}^{2} :\ M_{\textrm{dyad}}f(x)>\alpha\}.
		\end{align*}
Then there exist a sequence of disjoint dyadic cubes $\{Q_{j}\}$ as in Lemma \ref{a27} such that, for all $j$,
		\begin{enumerate}
			\item[\rm(i)]  $Q_{j}\subset \Omega_{\alpha}$ and
			\begin{align}\label{c12}
				\Omega_{\alpha}=\bigcup_{j}Q_{j};
			\end{align}
			\item[\rm(ii)]
			\begin{align}
				\alpha<\fint_{Q_{j}} \|f\|\,dx\lesssim\alpha.
			\end{align}
%			\item[\rm(iii)]  if in addition, $f\in C(\mathbb{R}^{2},B)$, then $\|f\|\leq \alpha$ a.e on $(\bigcup_{j} Q_{j})^{\complement}$.
		\end{enumerate}
	\end{lemma}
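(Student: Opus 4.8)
The plan is to run the usual stopping-time argument over maximal dyadic cubes, now in the homogeneous type space $(\mathbb{R}^{2},\rho,dx)$ equipped with the dyadic grid furnished by Lemma \ref{a27}; the Banach-space valued nature of $f$ plays no role, since only the scalar nonnegative function $\|f(\cdot)\|$ enters every estimate.

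First I would record that, for a.e.\ $x$, properties (i)--(iii) of Lemma \ref{a27} identify the dyadic cubes containing $x$ with a single strictly nested chain $\{Q^{k}(x)\}_{k\in\Z}$, $Q^{k+1}(x)\subset Q^{k}(x)$, so that $M_{\textrm{dyad}}f(x)=\sup_{k}\fint_{Q^{k}(x)}\|f\|\,dy$. The key preliminary fact is that $\fint_{Q^{k}(x)}\|f\|\,dy\to0$ as $k\to-\infty$: by property (v) the cube $Q^{k}(x)$ contains a $\rho$-ball of radius $c_{\rho}\delta^{k}$, and since $\rho$ is finite everywhere such balls exhaust $\mathbb{R}^{2}$, whence $|Q^{k}(x)|\to\infty$ and Hölder's inequality gives $\fint_{Q^{k}(x)}\|f\|\,dy\le|Q^{k}(x)|^{-1/p}\|f\|_{L^{p}(\mathbb{R}^{2},B)}\to0$. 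Thus for $x\in\Omega_{\alpha}$ the set $S_{x}=\{k\in\Z:\ \fint_{Q^{k}(x)}\|f\|\,dy>\alpha\}$ is nonempty and bounded below; put $k_{x}=\min S_{x}$ and $Q_{x}=Q^{k_{x}}(x)$, the maximal dyadic cube containing $x$ whose $\|f\|$-average exceeds $\alpha$. Its dyadic parent $\widehat{Q_{x}}=Q^{k_{x}-1}(x)$, well defined by property (iii), then satisfies $\fint_{\widehat{Q_{x}}}\|f\|\,dy\le\alpha$.

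Next I would collect $\{Q_{x}:\ x\in\Omega_{\alpha}\}$ and show distinct members are disjoint. If $Q_{x}\cap Q_{x'}\neq\varnothing$, property (ii) forces one into the other, say $Q_{x}\subset Q_{x'}$; then $x\in Q_{x}\subset Q_{x'}$ and $\fint_{Q_{x'}}\|f\|\,dy>\alpha$, so the level of $Q_{x'}$ belongs to $S_{x}$ and is therefore $\ge k_{x}$, while $Q_{x}\subset Q_{x'}$ forces it to be $\le k_{x}$; hence $Q_{x}=Q_{x'}$. Since each $I_{k}$ is countable, this collection is countable; enumerate it as $\{Q_{j}\}$. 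For $y\in Q_{j}$ one has $M_{\textrm{dyad}}f(y)\ge\fint_{Q_{j}}\|f\|\,dy>\alpha$, so $Q_{j}\subset\Omega_{\alpha}$; conversely each $x\in\Omega_{\alpha}$ lies in $Q_{x}$, which is one of the $Q_{j}$, giving $\Omega_{\alpha}=\bigcup_{j}Q_{j}$ together with the lower bound $\alpha<\fint_{Q_{j}}\|f\|\,dy$.

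Finally, for the upper bound I would compare $Q_{j}$ with $\widehat{Q_{j}}$: if $Q_{j}$ sits at level $k$ then $\widehat{Q_{j}}$ sits at level $k-1$ with $\fint_{\widehat{Q_{j}}}\|f\|\,dy\le\alpha$ by maximality, so
\[
\fint_{Q_{j}}\|f\|\,dy=\frac{1}{|Q_{j}|}\int_{Q_{j}}\|f\|\,dy\le\frac{|\widehat{Q_{j}}|}{|Q_{j}|}\,\fint_{\widehat{Q_{j}}}\|f\|\,dy\le\frac{|\widehat{Q_{j}}|}{|Q_{j}|}\,\alpha,
\]
and Lemma \ref{d5}(iii) bounds the ratio $|\widehat{Q_{j}}|/|Q_{j}|$ by a constant depending only on the homogeneous type structure, hence only on $\gamma$. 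This comparison of consecutive dyadic levels is the one spot where the quantitative geometry of $(\mathbb{R}^{2},\rho,dx)$ is genuinely used and is the real content of the lemma (in $\mathbb{R}^{n}$ this ratio would be exactly $2^{n}$); the remainder is the formal stopping-time scheme. Assembling the displays above yields (i) and (ii) and completes the proof.
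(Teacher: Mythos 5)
Your proof is correct and is exactly the standard stopping-time argument the paper has in mind (the paper omits the proof of this lemma, referring the reader to the literature): select the maximal dyadic cubes on which the average of $\|f\|$ exceeds $\alpha$, get the lower bound from the selection, and get the upper bound by passing to the parent cube and invoking Lemma \ref{d5}(iii) to control the ratio of measures of consecutive levels. The only point worth tightening is that the exact equality $\Omega_{\alpha}=\bigcup_{j}Q_{j}$ (not merely up to a null set) requires one extra line: any dyadic cube $Q$ witnessing $M_{\textrm{dyad}}f(x)>\alpha$ contains some point $y$ of the full-measure set where the nested chain exists, and $Q$ is then contained in the selected maximal cube $Q_{y}$, so $x\in Q\subset Q_{j}$ for some $j$.
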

	\begin{lemma}\label{d1}
Let $\vec{T}$ be a sublinear operator mapping the space $L^{p_{0}}(\mathbb{R}^{2},B)+L^{\infty}(\mathbb{R}^{2},B)$ to the space of all measurable functions on $\mathbb{R}^{2}$, where $1\leq p_{0}<\infty$. Suppose that
		\begin{align}\label{f27}
			\|\vec{T}f\|_{\infty}\lesssim\|f\|_{\infty} \text{\ \ for all\ } f\in L^{\infty}(\mathbb{R}^{2},B),
		\end{align}
and
		\begin{align}\label{f28}
			\|\vec{T}f\|_{p_{0},\infty}\lesssim \|f\|_{p_{0}} \text{\ \ for all\ } f\in L^{p_{0}}(\mathbb{R}^{2},B).
		\end{align}
		Then, for all $p_{0}<p<\infty$, we have
		\begin{align*}
			\|\vec{T}f\|_{p}\lesssim \|f\|_{p}  \text{\ \ for all\ } f\in L^{p}(\mathbb{R}^{2},B).
		\end{align*}
	\end{lemma}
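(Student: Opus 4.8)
The statement to prove is Lemma \ref{d1}, the Marcinkiewicz-type interpolation theorem for sublinear operators acting on Banach-space-valued $L^p$ spaces over the homogeneous type space $(\mathbb{R}^2,\rho,dx)$.

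The plan is to adapt the classical Marcinkiewicz interpolation argument, the only novelty being that the underlying space is a general homogeneous type space rather than $\mathbb{R}^n$ with Lebesgue measure, and that the functions take values in a Banach space $B$ — but since the argument only ever manipulates the scalar quantity $\|f(x)\|$, this causes no difficulty. Fix $p$ with $p_0 < p < \infty$ and $f \in L^p(\mathbb{R}^2, B)$. Given a height $\alpha > 0$, I would perform the Calderón–Zygmund decomposition of $f$ at level $\alpha$ relative to the dyadic cubes of Lemma \ref{a27}, i.e. split $f = g_\alpha + b_\alpha$ where $g_\alpha = f \cdot \chi_{\Omega_\alpha^{\complement}} + \sum_j \big(\fint_{Q_j} f\big)\chi_{Q_j}$ is the ``good'' part and $b_\alpha = \sum_j \big(f - \fint_{Q_j} f\big)\chi_{Q_j}$ is the ``bad'' part, with $\{Q_j\}$ and $\Omega_\alpha$ as furnished by Lemma \ref{a28}. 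By construction and Lemma \ref{a28}(ii) one has $\|g_\alpha(x)\| \lesssim \alpha$ pointwise a.e. (using the Lebesgue-differentiation-type control on $\Omega_\alpha^{\complement}$, which holds since $M_{\textrm{dyad}}$ controls $\|f\|$ off the bad set), and simultaneously $\|g_\alpha\|_{p_0}^{p_0} \lesssim \alpha^{p_0 - p}\|f\|_p^p$ because $g_\alpha$ coincides with $f$ where $\|f\| \le \alpha$ (roughly) and is an average bounded by $\alpha$ on the cubes whose total measure is $\lesssim \alpha^{-p}\|f\|_p^p$. Correspondingly $\|b_\alpha\|_{p_0} \lesssim \|f\|_{p_0}$-type bounds will not be needed directly; instead one estimates $b_\alpha$ in $L^{p_0}$ too via $\|b_\alpha\|_{p_0}^{p_0} \lesssim \alpha^{p_0-p}\|f\|_p^p$ using the same measure bound on $\Omega_\alpha$.

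Next I would use sublinearity of $\vec T$ to write, for the distribution function of $\vec T f$,
\begin{align*}
\big|\{x : \|\vec T f(x)\| > \alpha\}\big| \le \big|\{x : \|\vec T g_\alpha(x)\| > \alpha/2\}\big| + \big|\{x : \|\vec T b_\alpha(x)\| > \alpha/2\}\big|,
\end{align*}
and then apply hypothesis \eqref{f27} (the $L^\infty \to L^\infty$ bound) to the first term: choosing the decomposition so that $\|g_\alpha\|_\infty \le c\alpha$ for a fixed small $c$ makes the first set empty, so only the bad part contributes. For the second term I would invoke the weak $(p_0,p_0)$ hypothesis \eqref{f28}, giving
\begin{align*}
\big|\{x : \|\vec T b_\alpha(x)\| > \alpha/2\}\big| \lesssim \alpha^{-p_0}\|b_\alpha\|_{p_0}^{p_0}.
\end{align*}
Finally I would integrate in $\alpha$:
\begin{align*}
\|\vec T f\|_p^p = p\int_0^\infty \alpha^{p-1}\big|\{\|\vec T f\| > \alpha\}\big|\,d\alpha \lesssim \int_0^\infty \alpha^{p-1-p_0}\|b_\alpha\|_{p_0}^{p_0}\,d\alpha,
\end{align*}
and then unwind $\|b_\alpha\|_{p_0}^{p_0}$ as an integral of $\|f(x)\|$-dependent quantities over $\Omega_\alpha$, swap the order of integration in $\alpha$ and $x$ using Tonelli, and carry out the elementary one-dimensional $\alpha$-integral (which converges precisely because $p_0 < p$) to land on $\|f\|_p^p$. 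Here the homogeneous-type-space structure enters only through the fact that the dyadic cubes $\{Q_j\}$ are pairwise disjoint and cover $\Omega_\alpha$, and that Lemma \ref{a28} supplies the two-sided average bound; the doubling property \eqref{c7} is already baked into the existence of the dyadic system.

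I do not expect a genuine obstacle here: this is a standard Marcinkiewicz argument and every ingredient (dyadic cubes, the dyadic maximal function, the Calderón–Zygmund decomposition of $B$-valued functions) has been set up in the preceding paragraphs, which is exactly why the authors relegate the proof. The one point requiring a little care is bookkeeping the constant $c$ in $\|g_\alpha\|_\infty \le c\alpha$ so that the $L^\infty$ bound \eqref{f27} kills the good-part contribution entirely — this just amounts to replacing $\alpha$ by a fixed multiple of $\alpha$ throughout the decomposition and tracking that the implicit constants in Lemma \ref{a28} and in \eqref{f27} combine favorably. A second, purely cosmetic, point is that one should note $\vec T f$ is well-defined since $f \in L^p \subset L^{p_0} + L^\infty$ for $p_0 < p < \infty$, as required by the hypothesis on the domain of $\vec T$. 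Beyond that, the proof is routine, so I would present it compactly and refer to \cite{MR1232192} for the scalar prototype.
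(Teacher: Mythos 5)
The paper gives no proof of this lemma: it sits in the block of results declared ``standard'' and delegated to \cite{MR1232192}, and the standard argument there is Marcinkiewicz interpolation with the trivial height decomposition $f=f\chi_{\{\|f\|>c\alpha\}}+f\chi_{\{\|f\|\le c\alpha\}}$. The second piece has $L^{\infty}(\mathbb{R}^2,B)$ norm at most $c\alpha$, so by \eqref{f27} it contributes nothing to $\{\|\vec Tf\|>\alpha\}$ once $c$ is small; the first piece is handled by \eqref{f28}; and Tonelli gives exactly $\|f\|_p^p$ because the level sets are $\{\|f\|>c\alpha\}$. Your skeleton is the same interpolation, but you route the decomposition through the Calder\'on--Zygmund cubes of Lemma \ref{a28}, and this creates a genuine gap at the final step. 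Your sets $\Omega_\alpha$ are level sets of $M_{\textrm{dyad}}f$, not of $\|f\|$, so when you unwind $\|b_\alpha\|_{p_0}^{p_0}\lesssim\int_{\Omega_\alpha}\|f\|^{p_0}\,dx+\alpha^{p_0}|\Omega_\alpha|$ and swap the integrals you do not ``land on $\|f\|_p^p$'': you land on $\int\|f\|^{p_0}(M_{\textrm{dyad}}f)^{p-p_0}\,dx$ and $\|M_{\textrm{dyad}}f\|_p^p$. Controlling these by $\|f\|_p^p$ requires the $L^p$-boundedness of $M_{\textrm{dyad}}$, which in the paper's logical order is Lemma \ref{c32} --- itself deduced from the very Lemma \ref{d1} you are proving. (Your fallback bound $\|b_\alpha\|_{p_0}^{p_0}\lesssim\alpha^{p_0-p}\|f\|_p^p$ is true but useless here: it turns the $\alpha$-integral into $\int_0^\infty\alpha^{-1}\,d\alpha$, which diverges.)

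The gap is fixable in two ways: either drop the CZ decomposition in favor of the truncation decomposition above, which needs no dyadic structure at all and is why the lemma holds on an arbitrary measure space; or keep it and supply the maximal bound independently of Lemma \ref{d1} --- the disjointness of the cubes in Lemma \ref{a28} gives the weak $(1,1)$ inequality for $M_{\textrm{dyad}}$, which with the trivial $L^{\infty}$ bound and the scalar Marcinkiewicz theorem yields Lemma \ref{c32} without circularity. A smaller point: your estimate $\|g_\alpha\|_\infty\lesssim\alpha$ on $\Omega_\alpha^{\complement}$ uses $\|f(x)\|\le M_{\textrm{dyad}}f(x)$ a.e., and Lemma \ref{c25} is stated only for continuous $f$; for general $f\in L^{p}(\mathbb{R}^2,B)$ you need the Lebesgue differentiation theorem on the homogeneous type space or a density argument.
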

%	\begin{proof}
%The proof is the same as the case $B=\mathbb{R}.$
%		For all $f\in L^{p}(\mathbb{R}^{2},B)$, define
%		\begin{align*}
%			\textrm{$f^{\alpha}=f\chi_{\|f\|>\alpha}$ and $f_{\alpha}=f\chi_{\|f\|\leq\alpha}$}
%		\end{align*}
%			and by a triangle inequality, we have $|\{x|\vec{T}f(x)>2C\alpha\}|\leq|\{x|\vec{T}f^{\alpha}(x)>C\alpha\}|+|\{x|\vec{T}f_{\alpha}(x)>C\alpha\}|$.
%By \eqref{f27}, we have $\|\vec{T}f_{\alpha}\|_{\infty}\leq C\|f_{\alpha}\|_{\infty}\leq C\alpha.$ As a result, $|\{x|\vec{T}f_{\alpha}(x)>C\alpha\}|=0$ and $|\{x|\vec{T}f(x)>2C\alpha\}|\leq|\{x|\vec{T}f^{\alpha}(x)>C\alpha\}|$.
%		Then by \eqref{f28} and Fubini's theorem,
%		\begin{align*}
%			\|\vec{T}f\|_{p}^{p}&=p\int_{0}^{\infty}\alpha^{p-1}|\{x|\vec{T}f(x)>\alpha\}|d\alpha\simeq \int_{0}^{\infty}\alpha^{p-1}|\{x|\vec{T}f(x)>2C\alpha\}|d\alpha\nonumber\\
%			&\leq \int_{0}^{\infty}\alpha^{p-1}|\{x|\vec{T}f^{\alpha}(x)>C\alpha\}|d\alpha\lesssim \int_{0}^{\infty}\alpha^{p-1}\alpha^{-p_{0}}\int_{\|f\|>\alpha}\|f(x)\|^{p_{0}}dxd\alpha\nonumber\\
%			&\leq \int\|f(x)\|^{p_{0}}\cdot\|f(x)\|^{p-p_{0}}dx=\int\|f(x)\|^{p}dx.
%		\end{align*}
%	\end{proof}

By Lemmas \ref{a28} and \ref{d1} and a trivial $L^{\infty}$ estimate, we have
	\begin{lemma}\label{c32}
		Let $B$ be a Banach space and $M_{\textrm{dyad}}f$ be defined as in \eqref{c8}, we then have
		\begin{align}\label{c19}
			\big\|M_{\textrm{dyad}}f\big\|_{p}\lesssim \|f\|_{p}
		\end{align}
 for all $1<p<\infty$ and $f\in L^{p}(\mathbb{R}^{2},B)$.
%		\begin{align}\label{c20}
%			\|M_{dyad}f\|_{1,\infty}\leq \|f\|_{1}, \text{\ for all\ }f\in L^{1}(\mathbb{R}^{2},B).
%		\end{align}
	\end{lemma}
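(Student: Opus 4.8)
The plan is to deduce the $L^p$-bound for $M_{\textrm{dyad}}$ ($1<p<\infty$) from the Marcinkiewicz-type interpolation Lemma \ref{d1} by establishing the two endpoint-type hypotheses \eqref{f27} and \eqref{f28} with $p_0 = 1$, and then treating the case $p \ge 2$ together with $1<p<2$ uniformly. First I would record the trivial $L^\infty$ estimate: for $f \in L^\infty(\mathbb{R}^2, B)$, every average $\fint_Q \|f(y)\|\,dy$ is at most $\|f\|_{L^\infty(\mathbb{R}^2,B)}$, hence $\|M_{\textrm{dyad}}f\|_\infty \le \|f\|_\infty$, which is \eqref{f27} (with $\vec T = M_{\textrm{dyad}}$, viewed as a sublinear operator into scalar measurable functions; sublinearity of $M_{\textrm{dyad}}$ is immediate from the triangle inequality for $\|\cdot\|$ inside the average).

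Next I would prove the weak-type $(1,1)$ bound \eqref{f28} with $p_0 = 1$. Fix $\alpha > 0$ and apply the Calderón--Zygmund decomposition of Lemma \ref{a28} to $f \in L^1(\mathbb{R}^2, B)$ at height $\alpha$, obtaining disjoint dyadic cubes $\{Q_j\}$ with $\Omega_\alpha = \bigcup_j Q_j$ and $\alpha < \fint_{Q_j}\|f\|\,dx \lesssim \alpha$. Since $\{x : M_{\textrm{dyad}}f(x) > \alpha\} = \Omega_\alpha$ exactly (this is built into the definition of $\Omega_\alpha$ in Lemma \ref{a28}), we get
\begin{align*}
|\{x : M_{\textrm{dyad}}f(x) > \alpha\}| = \sum_j |Q_j| \le \frac{1}{\alpha}\sum_j \int_{Q_j} \|f(x)\|\,dx \le \frac{1}{\alpha}\|f\|_1,
\end{align*}
using disjointness of the $Q_j$ in the last step. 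This is \eqref{f28} with $p_0 = 1$. With both hypotheses of Lemma \ref{d1} verified, that lemma yields $\|M_{\textrm{dyad}}f\|_p \lesssim \|f\|_p$ for all $1 < p < \infty$, which is \eqref{c19}.

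The only genuine point requiring a moment's care — rather than a real obstacle — is checking that Lemma \ref{d1} applies with the target space being scalar measurable functions while the domain consists of $B$-valued functions; this is exactly the generality in which Lemma \ref{d1} is stated, so no extra work is needed. A second small point is that the doubling property \eqref{c7} of the underlying homogeneous type space is what makes the dyadic cube machinery of Lemma \ref{a27} (and hence Lemmas \ref{a28} and \ref{d1}) available in the first place; since all of that has been set up earlier in the excerpt, the present proof is purely a matter of assembling those ingredients. I do not anticipate any hard step: the entire argument is the standard Hardy--Littlewood maximal function proof transported verbatim to the $B$-valued, homogeneous-type-space setting, which is precisely why the paper states it can be left out with a reference to \cite{MR1232192}.
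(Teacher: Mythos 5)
Your proposal is correct and follows exactly the route the paper indicates: the trivial $L^\infty$ bound, the weak-type $(1,1)$ estimate via the Calder\'on--Zygmund decomposition of Lemma \ref{a28}, and interpolation via Lemma \ref{d1}. No issues.
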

%	\begin{proof}
%		First we show \eqref{c20} holds. Define
%		$\Omega_{\alpha}=\{x\in \mathbb{R}^{2}|M_{dyad}f(x)>\alpha\}.$
%		By Lemma \ref{a28}, we have
%		$\Omega_{\alpha}=\bigcup_{j}Q_{j}.$
%		Then
%		\begin{align*}
%			|\Omega_{\alpha}|&=\sum_{j}|Q_{j}|<\frac{1}{\alpha}\sum_{j}\int_{Q_{j}} \|f(y)\|dy\\
%			&\leq \frac{1}{\alpha}\int_{\Omega_{\alpha}} \|f(y)\|dy\leq \frac{1}{\alpha}\|f\|_{L^{1}}
%		\end{align*}
%		and \eqref{c20} follows.
%		
%		\eqref{c19} is the result of \eqref{c20} and Lemma \ref{d1}.
%	\end{proof}

	   Let $\rho$ be as in Definition \ref{a38} and $B$ be a Banach space, for
 $f\in L_{\textrm{loc}}^{1}(\mathbb{R}^{2},B)$, we define $M^{\sharp}_{\textrm{dyad}}f$ by
		$$M^{\sharp}_{\textrm{dyad}}f(x)=\sup_{x\in Q}\fint_{Q} \big\|f(y)-f\big|_{Q}\big\|\,dy,$$
		where $Q$ takes over all dyadic cubes as in Lemma \ref{a27} and $f\big|_{Q}=\fint_{Q} f(y)\,dy.$

	\begin{lemma}\label{a14}
		Let $1<p<\infty$ and B be a Banach space. Then one has
		\begin{align*}
			\|f\|_{p}\lesssim \big\|M^{\sharp}_{\textrm{dyad}}f\big\|_{p}
		\end{align*}
		for all $f\in L^{p}(\mathbb{R}^{2},B)\bigcap C(\mathbb{R}^{2},B)$.
	\end{lemma}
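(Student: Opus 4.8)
\textbf{Proof proposal for Lemma \ref{a14}.}

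The plan is to follow the classical good-$\lambda$ argument of Fefferman--Stein, transplanted to the homogeneous type space $(\mathbb{R}^2,\rho,dx)$ with the dyadic cube structure of Lemma \ref{a27}, and carried through for $B$-valued functions. The starting point is the layer-cake identity: since $f\in C(\mathbb{R}^2,B)$, Lemma \ref{c25} gives $\|f(x)\|\le M_{\textrm{dyad}}f(x)$ a.e., so it suffices to control $\|M_{\textrm{dyad}}f\|_p$. Writing $\|M_{\textrm{dyad}}f\|_p^p = p\int_0^\infty \lambda^{p-1}\,|\{M_{\textrm{dyad}}f>\lambda\}|\,d\lambda$, the whole matter reduces to a good-$\lambda$ inequality of the form
\begin{equation}\label{goodlambda-plan}
|\{x:\ M_{\textrm{dyad}}f(x)>2\lambda,\ M^{\sharp}_{\textrm{dyad}}f(x)\le \gamma_0\lambda\}|\le C\gamma_0\,|\{x:\ M_{\textrm{dyad}}f(x)>\lambda\}|
\end{equation}
for all $\lambda>0$ and a small fixed $\gamma_0$, with $C$ independent of $\lambda$ and $\gamma_0$; integrating \eqref{goodlambda-plan} against $p\lambda^{p-1}\,d\lambda$ and absorbing the resulting $M_{\textrm{dyad}}f$ term into the left-hand side (legitimate once we know $\|M_{\textrm{dyad}}f\|_p<\infty$, which holds by Lemma \ref{c32} since $f\in L^p$) yields $\|M_{\textrm{dyad}}f\|_p\lesssim \|M^{\sharp}_{\textrm{dyad}}f\|_p$, hence the lemma.

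To prove \eqref{goodlambda-plan} I would apply the Calder\'on--Zygmund decomposition of Lemma \ref{a28} to $\Omega_\lambda=\{M_{\textrm{dyad}}f>\lambda\}$, obtaining the disjoint dyadic cubes $\{Q_j\}$ with $\Omega_\lambda=\bigcup_j Q_j$ and $\lambda<\fint_{Q_j}\|f\|\lesssim\lambda$. Fix one $Q_j$ and let $\widetilde Q_j$ be its dyadic parent (Lemma \ref{a27}(iii)); by maximality $\fint_{\widetilde Q_j}\|f\|\le\lambda$, so $\|f\big|_{Q_j}\|\le \fint_{Q_j}\|f\|\lesssim \lambda$. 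The key localization is the standard observation that for $x\in Q_j$ one has $M_{\textrm{dyad}}f(x)\le \max\{M_{\textrm{dyad}}(f\chi_{Q_j})(x),\ c\lambda\}$: any dyadic cube $Q\ni x$ either is contained in $Q_j$ (contributing to the localized maximal function) or contains $Q_j$ (hence $\fint_Q\|f\|\le\lambda$ by nestedness of the $\Omega_\lambda$-cubes and maximality). Consequently, on $Q_j$ the set $\{M_{\textrm{dyad}}f>2\lambda\}$ is contained in $\{M_{\textrm{dyad}}(f\chi_{Q_j})>2\lambda\}$ once $c\lambda\le 2\lambda$. If in addition there is a point $\bar x\in Q_j$ with $M^{\sharp}_{\textrm{dyad}}f(\bar x)\le\gamma_0\lambda$, then $\fint_{Q_j}\|f-f\big|_{Q_j}\|\le \gamma_0\lambda$, so applying the weak-$(1,1)$ bound for $M_{\textrm{dyad}}$ (the $B$-valued version, contained in the proof of Lemma \ref{c32} via Lemma \ref{a28}) to $g:=(f-f\big|_{Q_j})\chi_{Q_j}$ gives
\begin{equation}\label{weak11-plan}
|\{x\in Q_j:\ M_{\textrm{dyad}}f(x)>2\lambda\}|\le |\{M_{\textrm{dyad}}g>\lambda\}|\le \frac{C}{\lambda}\int_{Q_j}\|f-f\big|_{Q_j}\|\le C\gamma_0\,|Q_j|,
\end{equation}
where I also used $\|f\big|_{Q_j}\|\lesssim\lambda$ to replace the threshold $2\lambda$ for $M_{\textrm{dyad}}f$ by $\lambda$ for $M_{\textrm{dyad}}g$ up to adjusting constants. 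Summing \eqref{weak11-plan} over $j$ (the $Q_j$ are disjoint and union to $\Omega_\lambda$) yields exactly \eqref{goodlambda-plan}; cubes $Q_j$ containing no point where $M^{\sharp}_{\textrm{dyad}}f\le\gamma_0\lambda$ simply do not contribute to the left-hand side of \eqref{goodlambda-plan}.

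The genuinely delicate points, as opposed to the routine ones, are two. First, one must be careful that all the averaging, the definition of $f\big|_Q$, and the weak-$(1,1)$ estimate go through verbatim with values in the Banach space $B$: this is fine because only the scalar quantities $\|f(\cdot)\|$ and $\|f-f\big|_Q\|$ enter, and $f\big|_Q$ makes sense as a Bochner integral since $f\in L^1_{\textrm{loc}}(\mathbb{R}^2,B)$. Second, and this is where the homogeneous-type-space structure matters rather than $\mathbb{R}^n$, one needs the nestedness and parent properties of the dyadic cubes (Lemma \ref{a27}(ii),(iii)) together with the comparability $|Q^{k-1}(\alpha)|\lesssim|Q^k(\beta)|$ from Lemma \ref{d5}(iii), so that passing to the dyadic parent loses only a bounded factor in measure — this replaces the exact factor $2^n$ available in the Euclidean dyadic grid and is exactly what keeps the constant in \eqref{goodlambda-plan} independent of $\lambda$. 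The hypothesis $f\in C(\mathbb{R}^2,B)$ is used only through Lemma \ref{c25} to pass from $\|f\|$ to $M_{\textrm{dyad}}f$ at the very start; everything else needs only $f\in L^p(\mathbb{R}^2,B)$. I expect the main obstacle to be bookkeeping the constants through the parent-cube step so that the absorption argument is clean, rather than any conceptual difficulty.
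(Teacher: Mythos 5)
Your argument is correct and is precisely the Fefferman--Stein good-$\lambda$ proof that the paper itself invokes by citing \cite{MR447953} rather than writing out: all the ingredients you use (Lemmas \ref{c25}, \ref{a27}, \ref{a28}, \ref{c32}, \ref{d5}) are exactly the ones the paper assembles in this section for that purpose. The only bookkeeping point is that since $\fint_{Q_j}\|f\|\le C_0\lambda$ with a doubling-dependent constant $C_0$ possibly larger than $2$, the good-$\lambda$ inequality should be stated with threshold $A\lambda$ for a large fixed $A$ in place of $2\lambda$ --- which you already flag as ``adjusting constants'' and which changes nothing in the layer-cake integration.
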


Based on these lemmas have been showed in this section, the proof of Lemma \ref{a14} can be found in \cite{MR447953}.

	\section{a key vector valued estimate}
	This section is devoted to proving Lemma \ref{a13}. We follow the spirit of the proof in \cite{MR955772}, where the underlying space is $\mathbb{R}^{n}$ instead of the general homogeneous type space. Working with this general space will brings some extra technical difficulties. We first state a few useful lemmas before proving the key Lemma \ref{a13}.

Throughout this section, we let $2<p<\infty$, $k,v\in \mathbb{Z}$ and $l\in \mathbb{N}$. We also let $a_{k}(u,\xi)\in C^{\infty}(\mathbb{R}^{3})$ and $\supp a_{k}\subset I\times K_{1}\times K_{2}$, where $I$ is a compact subset of $\mathbb{R}$ and $K_{1},K_{2}$ are compact subsets of $\mathbb{R}\setminus0$. 		Define $m_{k}(u,\xi)=a_{k}(u,2^{-l_{0}}\gamma_{l_{0}}(2^{l})^{-1}\xi_{1},2^{-k}\xi_{2})$,
		where $l_{0}\in \mathbb{Z}$ satisfies $2^{k+v}\gamma(2^{-l_{0}})\simeq1$. There maybe multiple choices of $l_{0}$, we here pick an arbitrary one. We let $Q_{x}$ be a dyadic cube in Lemma \ref{a27} satisfying $x\in Q_{x}$, the definitions of $r_{x}=r(Q_{x})$ and $Q_{x}^{*}$ can be found in the previous section.

	\begin{lemma}\label{a26}
	Let us set $v=0$, suppose that there exists a constant $F$ such that $a_{k}$ satisfies
\begin{align}\label{21-314-41}
\bigg\|\sup_{u\in I}|a_{k}(u,D)f|\bigg\|_{q}\leq F\|f\|_{q}
\end{align}
for $q\in\{2,\infty\}$ and $k\in \mathbb{Z}$. Define
		$$S_{1}f(x)=\fint_{Q_{x}} \sup_{u\in I}\bigg(\sum_{|k+r_{x}|\leq N}\bigg|m_{k}(u,D)f(y)-\fint_{Q_{x}}m_{k}(u,D)f(z)\,dz\bigg|^{2}\bigg)^{1/2}\,dy.$$
		Then for all $N\geq1$, we have
\begin{align}\label{21-314-42}
\|S_{1}f\|_{p}\lesssim N^{1/2-1/p}F\|f\|_{p}.
\end{align}
	\end{lemma}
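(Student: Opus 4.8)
The plan is to interpolate between an $L^\infty$ bound and a weak-type $(2,2)$ bound for the operator $S_1$, exploiting the dyadic structure and the fact that only $O(N)$ scales $k$ with $|k+r_x| \le N$ contribute at each point $x$. Since the summands are of the form $m_k(u,D)f(y) - \fint_{Q_x} m_k(u,D)f$, the averaging subtraction is tailored to make each term small unless the frequency scale $2^{-k}$ roughly matches the spatial scale $2^{r_x} = \delta^{r(Q_x)/\log_2\delta}$ of the cube $Q_x$ containing $x$; this is exactly the commutation-of-scales philosophy of Seeger \cite{MR955772}, now transplanted to the homogeneous-type space $(\mathbb{R}^2,\rho,dx)$ of Lemma \ref{a12}, whose dyadic cubes are furnished by Lemma \ref{a27}.

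First I would treat the $L^\infty$ estimate. Bounding each of the (at most $2N+1$) terms in the $\ell^2$-sum crudely by $2\sup_{u\in I}|m_k(u,D)f|$ and using the hypothesis \eqref{21-314-41} with $q=\infty$ (note $m_k(u,D)$ is a rescaling of $a_k(u,D)$, so the bound transfers with the same constant $F$), one gets $\|S_1 f\|_\infty \lesssim N^{1/2} F \|f\|_\infty$. The power $N^{1/2}$ comes from $(\sum_{|k+r_x|\le N} 1)^{1/2} \simeq N^{1/2}$ and the fact that the outer average over $Q_x$ does not help at $L^\infty$. Next comes the weak-type $(2,2)$ bound, which should cost no power of $N$: this is where the subtracted average and the almost-orthogonality in $k$ must be used. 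I would fix $u$, observe that $S_1 f(x) \le \fint_{Q_x}\sup_u(\cdots)\,dy$ is controlled by a dyadic-average of a square function, and split the contribution of each scale $k$ into a ``local part'' (input frequency-localized near $2^{-k}$, for which $m_k(u,D)f - \fint m_k(u,D)f$ is small because the kernel of $m_k(u,D)$ is essentially supported in a $\rho$-ball of radius $\simeq 2^{r_x}$ comparable to diam $Q_x$, so oscillation on $Q_x$ is tiny) and a ``tail part'' controlled via Lemma \ref{d5}(i) and the rapid decay of the kernel outside $Q_x^*$. Summing in $k$ then uses the Littlewood–Paley-type orthogonality encoded in \eqref{21-314-41} at $q=2$, together with the maximal-function bound $\|M_{\mathrm{dyad}}\|_{L^2\to L^2}$ of Lemma \ref{c32}, to produce $\|S_1 f\|_{2,\infty} \lesssim F\|f\|_2$ with no $N$-loss. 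Finally, interpolating (Marcinkiewicz, or directly Lemma \ref{d1} applied with $p_0=2$ to $\vec T = F^{-1}S_1$) between $\|S_1 f\|_\infty \lesssim N^{1/2}F\|f\|_\infty$ and $\|S_1 f\|_{2,\infty}\lesssim F\|f\|_2$ yields $\|S_1 f\|_p \lesssim N^{(1/2)(1-2/p)\cdot\frac{?}{}} F\|f\|_p$; optimizing the interpolation parameter so that the weak-$(2,2)$ endpoint carries weight $2/p$ and the $L^\infty$ endpoint weight $1-2/p$ gives exponent $N^{(1/2)(1-2/p)} = N^{1/2-1/p}$, which is \eqref{21-314-42}.

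The main obstacle I anticipate is the weak-type $(2,2)$ step, specifically making the ``oscillation is small on $Q_x$'' heuristic quantitative in the non-isotropic homogeneous-type geometry: the dilation $(2^{-l_0}\gamma_{l_0}(2^l)^{-1}, 2^{-k})$ underlying $m_k$ is not a group dilation of $\rho$, so the kernel of $m_k(u,D)$ is not simply a rescaled bump adapted to $\rho$-balls, and one must check — using the doubling and curvature estimates for $\gamma_l$ in Lemma \ref{b44} and the relation $2^{k+v}\gamma(2^{-l_0})\simeq 1$ — that its effective support and smoothness still match the scale $\delta^{r(Q_x)}$ of the dyadic cubes. A related subtlety is the uniformity in $u\in I$: one needs the implicit constants in these kernel estimates, and in particular in the almost-orthogonality in $k$, to be independent of $u$, which should follow because $u$ ranges over the compact set $I$ and $a_k(u,\cdot)$ is smooth with all seminorms controlled; but this must be tracked carefully so that the only $u$-dependence is swallowed by the $\sup_{u\in I}$ already present in \eqref{21-314-41}. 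Once these kernel estimates are in place, the summation in $k$ and the passage through $M_{\mathrm{dyad}}$ and Lemma \ref{a14}/\ref{c32} are routine, and the interpolation is immediate.
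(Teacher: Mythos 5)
Your overall scheme coincides with the paper's: an $L^\infty$ bound losing $N^{1/2}$ (obtained exactly as you describe, by crudely bounding each of the $O(N)$ summands via \eqref{21-314-41} at $q=\infty$), an $L^2$-type bound with no loss in $N$, and interpolation giving $N^{(1/2)(1-2/p)}=N^{1/2-1/p}$. The difference is in the $L^2$ endpoint. You propose a weak-type $(2,2)$ bound built on a local/tail kernel decomposition, Lemma \ref{d5}(i), and quantitative use of the subtracted average $\fint_{Q_x}m_k(u,D)f$; in the paper none of that is needed here. Instead one proves a \emph{strong} $L^2$ bound by simply discarding the cancellation (triangle inequality), dropping the restriction $|k+r_x|\le N$ so that $S_1f\le 2\,M_{\mathrm{dyad}}\vec Tf$ with $\vec Tf=\{m_k(u,D)f\}_k$ viewed as $L^\infty_u\ell^2_k$-valued, applying the vector-valued maximal bound of Lemma \ref{c32}, and then summing in $k$ via $\sum_k\|\sup_u|m_k(u,D)f|\|_2^2\le F^2\sum_k\|\tilde P_k^2f\|_2^2\lesssim F^2\|f\|_2^2$ — the orthogonality coming not from \eqref{21-314-41} itself but from the compact support of $a_k$ in $\xi_2$, which makes $m_k(u,D)=m_k(u,D)\tilde P_k^2$. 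The cancellation of the subtracted average and the kernel estimates outside $Q_x^*$ that you anticipate as the ``main obstacle'' are in fact the engine of the companion Lemma \ref{a19} for the regime $|k+r_x|>N$, where one needs geometric decay in $|k+r_x|$; for the present lemma they are superfluous, and your route, while it should work, imports difficulties (uniform kernel estimates in the non-isotropic quasi-metric, $u$-uniformity of almost-orthogonality) that the simpler argument avoids entirely.
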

	\begin{proof}
		By \eqref{21-314-41} and scaling, for $q\in\{2,\infty\}$, we conclude that
		$$\bigg\|\sup_{u\in I}\big|m_{k}(u,D)f\big|\bigg\|_{q}\leq F\|\tilde{P}_{k}^{2}f\|_{q},$$
		where $\tilde{P}_{k}^{2}f=(\tilde{\phi}(2^{-k}\xi_{2})\widehat{f})^\vee$ for some $\tilde{\phi}\in C_{c}^{\infty}(\mathbb{R}\setminus0)$ and $\tilde{\phi}=1$ on $K_{2}$.
		
		Define $\vec{T}f=\{m_{k}(u,D)f\}_{k}$, then by Lemma \ref{c32} we have
		\begin{align}\label{b10}
			\|S_{1}f\|_{2}&\lesssim \big\|M^{\sharp}_{\textrm{dyad}}\vec{T}f\big\|_{2}\lesssim \big\|M_{\textrm{dyad}}\vec{T}f\big\|_{2}\lesssim \big\|\vec{T}f\big\|_{L^{2}(L^{\infty}l^{2})}\nonumber\\
			&\leq \bigg(\sum_{k}\bigg\|\sup_{u\in I}\big|m_{k}(u,D)f\big|\bigg\|_{2}^{2}\bigg)^{1/2}\leq
			F\bigg(\sum_{k}\big\|\tilde{P}_{k}^{2}f\big\|_{2}^{2}\bigg)^{1/2}\lesssim F\|f\|_{2},
		\end{align}
		and
		$$\sup_{x}|S_{1}f(x)|\lesssim N^{1/2}\sup_{k}\sup_{u\in I}\sup_{y}\big|m_{k}(u,D)f(y)\big|\leq N^{1/2}F\|f\|_{\infty}.$$
		By interpolation, \eqref{21-314-42} follows.
	\end{proof}
	To prove Lemma \ref{a19}, we need the following estimate:
	\begin{lemma}\label{a20}
Let us set $v=0$, suppose that there exists a constant $G$ such that $a_{k}$ satisfies
\begin{align}\label{f24}
		\big|\partial_{\xi}^{\alpha}a_{k}(u,\xi)\big|\leq G
\end{align}
 for all $u\in I,$ $\xi\in K_{1}\times K_{2}$, $|\alpha|\leq 6$ and $k\in \mathbb{Z}$.	Define
		$$E_{k}^{u}(x,y,z)=\int_{(Q_{x}^{*})^{\complement}} \big|(m _{k}(u,\cdot))^\vee(y-w)-(m _{k}(u,\cdot))^\vee(z-w)\big|\,dw.$$
		Then there exists a constant $\delta>0$ such that
		\begin{align}\label{f2}
			E_{k}^{u}(x,y,z)\lesssim GC_{0,U}^{l}2^{-\delta|k+r_{x}|}
		\end{align}
 for all $y,z\in Q_{x}$ and $u\in I.$
	\end{lemma}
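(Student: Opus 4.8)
\textbf{Proof plan for Lemma \ref{a20}.}
The plan is to estimate the kernel difference $(m_k(u,\cdot))^\vee(y-w)-(m_k(u,\cdot))^\vee(z-w)$ by exploiting both the smoothness bound \eqref{f24} and the fact that $y,z$ lie in the common dyadic cube $Q_x$ while $w$ lies outside the enlarged cube $Q_x^*$. First I would record the scaling structure of the multiplier: since $m_k(u,\xi)=a_k(u,2^{-l_0}\gamma_{l_0}(2^l)^{-1}\xi_1,2^{-k}\xi_2)$ and $v=0$ (so $2^{k}\gamma(2^{-l_0})\simeq 1$), writing $\lambda_1=2^{-l_0}\gamma_{l_0}(2^l)^{-1}$ and $\lambda_2=2^{-k}$ gives $(m_k(u,\cdot))^\vee(\zeta)=\lambda_1^{-1}\lambda_2^{-1}(a_k(u,\cdot))^\vee(\zeta_1/\lambda_1,\zeta_2/\lambda_2)$. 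Because $a_k$ is supported in $I\times K_1\times K_2$ and obeys \eqref{f24} up to order $6$, repeated integration by parts in $\xi$ gives the standard non-stationary decay estimate $|(a_k(u,\cdot))^\vee(\eta)|\lesssim G(1+|\eta|)^{-M}$ for $M\le 6$ (uniformly in $u\in I$, $k\in\Z$), and the same for $\nabla_\eta$ of this kernel with one less power. Transferring back, $(m_k(u,\cdot))^\vee$ is an $L^1$-normalized bump at anisotropic scale $(\lambda_1,\lambda_2)$ with the analogous decay.

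Next I would use Lemma \ref{d5}(i): for $x\in Q_x$ and $w\in(Q_x^*)^{\complement}$ one has $\rho(w-x)\gtrsim 2^{r_x}$, and since $y,z\in Q_x$ with $\mathrm{diam}(Q_x)\lesssim 2^{r_x}$, also $\rho(w-y)\simeq\rho(w-z)\gtrsim 2^{r_x}$ by the quasi-triangle inequality \eqref{a11}. Recalling $\rho(\zeta)=\gamma(|\zeta_1|^{-1})^{-1}+|\zeta_2|$, the condition $\rho(w-y)\gtrsim 2^{r_x}$ splits into the two regimes where $\gamma(|\zeta_1|^{-1})^{-1}\gtrsim 2^{r_x}$ (i.e. $|\zeta_1|\gtrsim \gamma^{-1}(2^{-r_x})^{-1}$) or $|\zeta_2|\gtrsim 2^{r_x}$. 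In either regime the Euclidean point $(\zeta_1/\lambda_1,\zeta_2/\lambda_2)$ is large in at least one coordinate, by an amount governed by comparing $2^{r_x}$ with the scales $\lambda_1^{-1}=2^{l_0}\gamma_{l_0}(2^l)$ and $\lambda_2^{-1}=2^k$. Using $2^{k}\gamma(2^{-l_0})\simeq 1$ together with the doubling Lemma \ref{c1} and Lemma \ref{a21} to compare $\gamma^{-1}(2^{-r_x})$ with $\gamma^{-1}(2^{-k})$, one sees that the relevant ratio is a power of $2^{|k+r_x|}$: when $2^{r_x}\gg$ the natural scale the kernel is already negligible, and when $2^{r_x}$ is comparable or smaller one pays a gain $2^{-\delta|k+r_x|}$ per unit. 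The factor $C_{0,U}^l$ enters precisely through $\gamma_{l_0}(2^l)=\gamma(2^{-l_0+l})/\gamma(2^{-l_0})$, which by the doubling bound in Lemma \ref{c1}(i) is $\le C_{0,U}^{l}$; this is the source of the $C_{0,U}^l$ on the right of \eqref{f2}.

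Concretely, I would bound
$E_k^u(x,y,z)\le \int_{(Q_x^*)^\complement}\big(|(m_k(u,\cdot))^\vee(y-w)|+|(m_k(u,\cdot))^\vee(z-w)|\big)\,dw$
using only the decay (not the difference) when $\rho(w-y)$ is very large, and the mean value theorem $|(m_k)^\vee(y-w)-(m_k)^\vee(z-w)|\lesssim |y-z|\,\sup|\nabla (m_k)^\vee|$ when $\rho(w-y)$ is moderate, where $|y-z|\lesssim\mathrm{diam}(Q_x)$. Integrating the anisotropic bump decay over the complementary region — changing variables to the Euclidean scale $(\lambda_1,\lambda_2)$, whose Jacobian $\lambda_1\lambda_2$ cancels the normalization — produces a convergent integral with the stated exponential gain in $|k+r_x|$, after absorbing the mismatch between the $\rho$-geometry and Euclidean geometry via Lemma \ref{a21}. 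The constant $G$ factors out linearly since every kernel estimate is linear in $G$.

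The main obstacle, and the step requiring care, is the bookkeeping of the anisotropic scales: one must show that the $\rho$-ball exclusion $\rho(w-y)\gtrsim 2^{r_x}$ really forces the Euclidean argument $(\,(y_1-w_1)/\lambda_1,\ (y_2-w_2)/\lambda_2\,)$ to be large by a definite power of $2^{|k+r_x|}$ \emph{in at least one coordinate, uniformly over the relevant range of $l_0$ and $l$}. This is exactly where the non-homogeneity of $\gamma$ bites (one cannot simply write $\gamma(ab)=\gamma(a)\gamma(b)$), and it is handled by invoking the doubling estimates of Lemma \ref{c1}, the uniform control of $\gamma_{l_0}$ from Lemma \ref{b44}, and the two-sided power bounds \eqref{21-312-210}–\eqref{21-312-211} of Lemma \ref{a21}; choosing $\delta$ small relative to $c_1,c_2,\log_2 C_{0,U},\log_2 C_{0,L}$ then closes the estimate.
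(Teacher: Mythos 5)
Your plan is correct and is essentially the paper's own argument: an anisotropic kernel bound for $(m_k(u,\cdot))^\vee$ obtained by integration by parts from \eqref{f24}, the separation $\rho(w-y)\gtrsim 2^{r_x}$ from Lemma \ref{d5}(i), pure decay in one regime versus the mean value theorem plus the diameter of $Q_x$ in the other, with Lemma \ref{a21} converting the $\rho$-geometry into the gain $2^{-\delta|k+r_x|}$ and $\gamma_{l_0}(2^l)\le C_{0,U}^l$ supplying the factor $C_{0,U}^l$. The only cosmetic difference is that the paper organizes the two regimes by the sign of $k+r_x$ (decay for $k+r_x\ge 0$, mean value theorem applied coordinatewise for $k+r_x\le 0$) rather than by the size of $\rho(w-y)$, which is the cleaner bookkeeping of the anisotropy you flag as the delicate step.
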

	\begin{proof}
		Define $K_{k}(x)=a _{k}(u,\cdot)^\vee(x)$, then
		$$m _{k}(u,\cdot)^\vee(x)=\gamma^{-1}(2^{-k})^{-1}\gamma_{l_{0}}(2^{l})2^{k}K_{k}\big(\gamma^{-1}(2^{-k})^{-1}\gamma_{l_{0}}(2^{l})x_{1},2^{k}x_{2}\big).$$
	By \eqref{f24} and integration by parts, we obtain a kernel estimate for $K_{k}$ and then we have
		\begin{align}\label{a24}
			\big|m _{k}(u,\cdot)^\vee(x)\big|\lesssim G\gamma^{-1}(2^{-k})^{-1}\gamma_{l_{0}}(2^{l})2^{k}\big(1+|\gamma^{-1}(2^{-k})^{-1}\gamma_{l_{0}}(2^{l})x_{1}|+|2^{k}x_{2}|\big)^{-4}.
		\end{align}

		We prove \eqref{f2} by considering two cases $k+r_{x}\geq0$ and $k+r_{x}\leq0$. For the first case $k+r_{x}\geq0$,
		by Lemmas \ref{d5} and \eqref{a24}, we bound $E_{k}^{u}(x,y,z)$ by $\int_{\rho(w)\geq C2^{r_{x}}}|m _{k}(u,\cdot)^\vee(w)|\,dw$, which can be further bounded by
		\begin{align}\label{f26}
			G\gamma^{-1}(2^{-k})^{-1}\gamma_{l_{0}}(2^{l})2^{k}
\int_{\gamma(|w_{1}|^{-1})^{-1}+|w_{2}|\geq C2^{r_{x}}}\big(1+|\gamma^{-1}(2^{-k})^{-1}\gamma_{l_{0}}(2^{l})w_{1}|+|2^{k}w_{2}|\big)^{-4}\,dw.
		\end{align}
Next, for $\gamma(|w_{1}|^{-1})^{-1}+|w_{2}|\geq C2^{r_{x}}$, we will show
\begin{align}\label{f25}
\big(1+|\gamma^{-1}(2^{-k})^{-1}w_{1}|+|2^{k}w_{2}|\big)^{-1}\lesssim 2^{-\min\{c_{1},1\}(k+r_{x})}.
\end{align}
Here $c_{1}$ is the same as that in Lemma \ref{a21}. Then by \eqref{f26}, \eqref{f25} and an $L^{1}$ estimate, we have verified \eqref{f2} if $k+r_{x}\geq0.$
		To show \eqref{f25}, we consider $\gamma(|w_{1}|^{-1})^{-1}\geq \frac{C}{2}2^{r_{x}}$ or $|w_{2}|\geq \frac{C}{2}2^{r_{x}}$, respectively.
If $\gamma(|w_{1}|^{-1})^{-1}\geq \frac{C}{2}2^{r_{x}}$, we have $|w_{1}|^{-1}\leq \gamma^{-1}(\frac{2}{C}2^{-r_{x}})$ and
\begin{align*}	\big(1+|\gamma^{-1}(2^{-k})^{-1}w_{1}|+|2^{k}w_{2}|\big)^{-1}&\lesssim \gamma^{-1}(2^{-k})|w_{1}|^{-1}\lesssim \gamma^{-1}(2^{-k})\gamma^{-1}\big(\frac{2}{C}2^{-r_{x}}\big)\\
&\lesssim\gamma^{-1}(2^{-k})\gamma^{-1}(2^{-[r_{x}]-1})\lesssim 2^{-c_{1}(k+r_{x})},
\end{align*}
		where in the last inequality we used Lemma \ref{a21}.
If $|w_{2}|\geq \frac{C}{2}2^{r_{x}}$, similarly, we have
$$\big(1+|\gamma^{-1}(2^{-k})^{-1}w_{1}|+|2^{k}w_{2}|\big)^{-1}\lesssim 2^{-(k+r_{x})}.$$
Then \eqref{f25} follows and thus we finish the proof of \eqref{f2} if $k+r_{x}\geq0$.
		
		Next we consider the case $k+r_{x}\leq0.$ By the mean value theorem, we can write
		$$\big|m_{k}(u,\cdot)^\vee(y-w)-m_{k}(u,\cdot)^\vee(z-w)\big|$$
		as the sum of the following two terms:	
		$$\gamma^{-1}(2^{-k})^{-1}\gamma_{l_{0}}(2^{l})(y_{1}-z_{1})\int_{0}^{1}m' _{k}(u,\cdot)^\vee(\theta(y-z)+z-w)\,d\theta,$$
and
$$2^{k}(y_{2}-z_{2})\int_{0}^{1}m'' _{k}(u,\cdot)^\vee(\theta(y-z)+z-w)\,d\theta,$$
where $$\partial_{x_{1}}m _{k}(u,\cdot)^\vee(x)=\gamma^{-1}(2^{-k})^{-1}\gamma_{l_{0}}(2^{l})(m _{k}'(u,\cdot))^\vee(x)\ \ \textrm{and} \ \ \partial_{x_{2}}m _{k}(u,\cdot)^\vee(x)=2^{k}(m _{k}''(u,\cdot))^\vee(x)$$ for some $m_{k}',m_{k}''$ that share the same properties as $m_{k}$.
		After integrating in $w$, for all $k+r_{x}\leq0$, we find that
		\begin{align*}
			E_{k}^{u}(x,y,z)&\lesssim G\big(\gamma^{-1}(2^{-k})^{-1}\gamma_{l_{0}}(2^{l})|y_{1}-z_{1}|+2^{k}|y_{2}-z_{2}|\big)\\
			&\lesssim \gamma_{l_{0}}(2^{l})G\big(2^{c_{2}(k+r_{x})}+2^{k+r_{x}}\big)\lesssim C_{0,U}^{l}G2^{\min\{c_{2},1\}(k+r_{x})},
		\end{align*}
	where in the first inequality we used
 $$\int_{\mathbb{R}^{2}}\big|m' _{k}(u,\cdot)^\vee(w)\big|\,dw\lesssim G \ \ \textrm{ and} \ \ \int_{\mathbb{R}^{2}}\big|m'' _{k}(u,\cdot)^\vee(w)\big|\,dw\lesssim G,$$ which can be proved by a kernel estimate like \eqref{a24}. In the second inequality we used Lemma \ref{a21} and the doubling property of $\gamma$ in Lemma \ref{c1}.
		Then we have verified \eqref{f2} if $k+r_{x}\leq0.$
	\end{proof}
	\begin{lemma}\label{a19}
Let us set $v=0$, suppose that there exist constants $F$ and $G$ such that $a_{k}$ satisfies
		$$\bigg\|\sup_{u\in I}\big|a_{k}(u,D)f\big|\bigg\|_{2}\leq F\|f\|_{2} \ \ \textrm{and}\ \ |\partial_{\xi}^{\alpha}a_{k}(u,\xi)|\leq G$$
 for all $u\in I,$ $\xi\in K_{1}\times K_{2}$, $|\alpha|\leq 6$ and $k\in \mathbb{Z}$.
		Furthermore, let us set $N=\frac{1}{\delta}\log_{2}(C_{0,U}^{l}G/F)$, and define
		$$S_{2}f(x)=\fint_{Q_{x}} \sup_{u\in I}\bigg(\sum_{|k+r_{x}|> N}\bigg|m_{k}(u,D)f(y)-\fint_{Q_{x}}m_{k}(u,D)f(z)\,dz\bigg|^{2}\bigg)^{1/2}\,dy,$$
		where $Q_{x}$ is a dyadic cube satisfying $x\in Q_{x}$.
		Then we have
		$$\|S_{2}f\|_{p}\lesssim F\|f\|_{p}.$$
	\end{lemma}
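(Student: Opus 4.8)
The plan is to estimate $S_2 f$ pointwise by the sharp dyadic maximal function of the vector $\vec{T}f = \{m_k(u,D)f\}_k$ restricted to the ``far'' indices $|k+r_x|>N$, and then to exploit the kernel decay supplied by Lemma \ref{a20} to trade the tail sum over $|k+r_x|>N$ against the ratio $C_{0,U}^l G/F$, which is precisely what the choice $N=\tfrac{1}{\delta}\log_2(C_{0,U}^l G/F)$ is designed to absorb. First I would split, for $x\in Q_x$ and $y,z\in Q_x$, the difference $m_k(u,D)f(y)-\fint_{Q_x}m_k(u,D)f(z)\,dz$ according to whether the convolution kernel $(m_k(u,\cdot))^\vee$ is integrated against $w$ inside $Q_x^*$ or in its complement. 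The inside part is controlled by $\sup_{u\in I}|m_k(u,D)(f\chi_{CQ_x^*})|$ on an enlarged cube, which after summing in $k$ over the $O(N)$ relevant indices and using the $L^2$ hypothesis on $a_k$ (rescaled as in the proof of Lemma \ref{a26}) contributes an $L^2$ bound of size $\lesssim F\|f\|_2$; the tail part is exactly $E^u_k(x,y,z)$ from Lemma \ref{a20}, which is $\lesssim G C_{0,U}^l 2^{-\delta|k+r_x|}$.

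Carrying out the details: for the tail piece I would bound
$$\sup_{u\in I}\Big(\sum_{|k+r_x|>N}\big|E^u_k(x,y,z)\,M f(\cdot)\big|^2\Big)^{1/2}$$
--- more precisely, the tail kernel difference tested against $f$ --- by a constant times $\big(\sum_{|k+r_x|>N}(GC_{0,U}^l 2^{-\delta|k+r_x|})^2\big)^{1/2}$ acting on a suitable maximal average of $f$. Summing the geometric series gives $\lesssim GC_{0,U}^l 2^{-\delta N}$, and by the definition of $N$ this equals a constant times $F$. Thus the tail contribution to $S_2 f(x)$ is $\lesssim F\,\mathcal{M}f(x)$ for a Hardy--Littlewood-type maximal operator $\mathcal{M}$ adapted to $(\mathbb{R}^2,\rho,dx)$, which is bounded on $L^p$ for $1<p<\infty$; hence this piece is $\lesssim F\|f\|_p$.

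For the local piece, I would run the sharp-function argument exactly as in Lemma \ref{a26}: writing $\vec{T}f=\{m_k(u,D)f\}_{k}$, one has $S_2 f(x)\lesssim M^{\sharp}_{\textrm{dyad}}(\vec{T}f)(x)$ where the $\ell^2$-norm inside is over $|k+r_x|>N$; then Lemma \ref{a14} (for the Banach space $B=\ell^2$ with the $\sup_{u\in I}$ incorporated, as in the introduction's discussion of Seeger's theorem in a Banach-valued setting) together with Lemma \ref{c32} reduces matters to $\|M_{\textrm{dyad}}\vec{T}f\|_p\lesssim\|\vec{T}f\|_{L^p(L^\infty\ell^2)}$. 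The key point is that we only need the $L^2$ estimate here, not an $L^\infty$ one, because the number of active indices is no longer finite; instead the decay in Lemma \ref{a20} does the summation. So we want $\|\vec{T}f\|_{L^2(\ell^2)}\lesssim F\|f\|_2$, which follows by orthogonality of the Littlewood--Paley pieces $\tilde P^2_k f$ from the rescaled hypothesis $\|\sup_{u\in I}|m_k(u,D)f|\|_2\le F\|\tilde P^2_k f\|_2$, summed in $k$. Interpolating (or rather, combining) the $L^2$ bound for the local piece with the $L^p$ maximal bound for the tail piece yields $\|S_2 f\|_p\lesssim F\|f\|_p$ for all $2<p<\infty$.

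The main obstacle I anticipate is bookkeeping the interaction between the cutoff to $(Q_x^*)^{\complement}$ and the $\sup_{u\in I}$: the cube $Q_x$ depends on $x$ while the kernel estimate in Lemma \ref{a20} is uniform in $u$ only after the rescaling by $\gamma^{-1}(2^{-k})^{-1}\gamma_{l_0}(2^l)$, so one must be careful that the factor $C_{0,U}^l$ appearing there is genuinely tamed by the choice of $N$ rather than accumulating. A secondary technical point is justifying the Banach-valued sharp maximal inequality (Lemma \ref{a14}) with $B=\ell^2$ together with the $u$-supremum; this is where the generalization of Seeger's argument to the homogeneous type space $(\mathbb{R}^2,\rho,dx)$ and to Banach-valued functions, advertised in the introduction, is actually used.
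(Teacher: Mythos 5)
Your treatment of the tail piece (the part of $f$ supported on $(Q_x^*)^{\complement}$) is essentially the paper's: Lemma \ref{a20}, the geometric series over $|k+r_x|>N$, and the choice of $N$ so that $C_{0,U}^{l}G2^{-\delta N}=F$. One caveat: Lemma \ref{a20} only controls $\int_{(Q_x^*)^{\complement}}|(m_k(u,\cdot))^\vee(y-w)-(m_k(u,\cdot))^\vee(z-w)|\,dw$, so pairing with $f$ gives a bound by $F$ times $\sup_w|f(w)|$, not by $F\mathcal{M}f(x)$; accordingly the paper runs this piece as an $L^\infty$ estimate rather than an $L^p$ one.

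The genuine gap is in the local piece. First, your reduction is circular: $S_2f$ already \emph{is} (a piece of) $M^{\sharp}_{\textrm{dyad}}\vec{T}f$, and Lemma \ref{a14} goes in the direction $\|g\|_p\lesssim\|M^{\sharp}_{\textrm{dyad}}g\|_p$; invoking it here to pass to $\|M_{\textrm{dyad}}\vec{T}f\|_p\lesssim\|\vec{T}f\|_{L^p(L^\infty \ell^2)}$ reduces Lemma \ref{a19} to the very estimate Lemma \ref{a13} is trying to prove (and without the $N^{1/2-1/p}$ loss, which would make the whole $S_1/S_2$ splitting pointless). Second, one cannot ``combine'' an $L^2$ bound for the local piece with an $L^p$ bound for the tail piece to conclude an $L^p$ bound for their sum; each piece needs an estimate at, or interpolable to, the exponent $p$. (Also, for $S_2$ the active indices $|k+r_x|>N$ form an infinite set; the ``$O(N)$ relevant indices'' situation is $S_1$.) The missing idea -- the actual content of the paper's proof -- is that the average $\fint_{Q_x}$ built into $S_2$ converts the $L^2$ operator hypothesis into an $L^\infty(\ell^2)\to L^\infty$ bound for the local piece: by Cauchy--Schwarz and $|Q_x^*|\lesssim|Q_x|$,
\begin{equation*}
\vec{T}(\{\chi_{Q_x^*}f_k\})(x)\lesssim |Q_x|^{-1/2}\bigg(\sum_k\Big\|\sup_{u\in I}\big|m_k(u,D)(\chi_{Q_x^*}f_k)\big|\Big\|_2^2\bigg)^{1/2}\lesssim F\,\sup_y\bigg(\sum_k|f_k(y)|^2\bigg)^{1/2}.
\end{equation*}
Together with the tail bound this yields $\|\vec{T}(\{f_k\})\|_\infty\lesssim F\|(\sum_k|f_k|^2)^{1/2}\|_\infty$, which, combined with the $L^2(\ell^2)\to L^2$ bound and the interpolation Lemma \ref{d1}, gives the vector-valued $L^p$ estimate; taking $f_k=\tilde P_k^2f$ and Littlewood--Paley then finishes. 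This is also why the paper formulates the argument for general sequences $\{f_k\}$ rather than a single $f$: the $L^\infty$ endpoint is only meaningful in that vector-valued form.
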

	\begin{proof}
		Define
		\begin{align*}
			\vec{T}(\{f_{k}\})(x)=\fint_{Q_{x}} \sup_{u\in I}\bigg(\sum_{|k+r_{x}|> N}\bigg|m_{k}(u,D)f_{k}(y)-\fint_{Q_{x}}m_{k}(u,D)f_{k}(z)\,dz\bigg|^{2}\bigg)^{1/2}\,dy.
		\end{align*}
		By Lemma \ref{d1}, it suffices to show
		\begin{align}\label{a33}
			\Big\|\vec{T}(\{f_{k}\})\Big\|_{2}\lesssim F\bigg\|\bigg(\sum_{k}|f_{k}|^{2}\bigg)^{1/2}\bigg\|_{2}
		\end{align}
and
		\begin{align}\label{a34}
			\Big\|\vec{T}(\{f_{k}\})\Big\|_{\infty}\lesssim F \bigg\|\bigg(\sum_{k}|f_{k}|^{2}\bigg)^{1/2}\bigg\|_{\infty}.
		\end{align}

	We start with the proof of \eqref{a33}. Similarly as \eqref{b10}, we have
		$$\Big\|\vec{T}(\{f_{k}\})\Big\|_{2}\lesssim \bigg(\sum_{k}\bigg\|\sup_{u\in I}\big|m_{k}(u,D)f_{k}\big|\bigg\|_{2}^{2}\bigg)^{1/2}\leq F\bigg\|\bigg(\sum_{k}|f_{k}|^{2}\bigg)^{1/2}\bigg\|_{2}.$$

		It remains to show \eqref{a34}.
		Notice that $\vec{T}(\chi_{Q_{x}^{*}}f_{k})(x)$ can be written as
		$$\fint_{Q_{x}} \sup_{u\in I}\bigg(\sum_{|k+r_{x}|> N}\bigg|m_{k}(u,D)(\chi_{Q_{x}^{*}}f_{k})(y)-\fint_{Q_{x}}m_{k}(u,D)(\chi_{Q_{x}^{*}}f_{k})(z)\,dz\bigg|^{2}\bigg)^{1/2}\,dy,$$
and $\vec{T}(\chi_{(Q_{x}^{*})^{\complement}}f_{k})(x)$ can be written as
		$$\fint_{Q_{x}} \sup_{u\in I}\bigg(\sum_{|k+r_{x}|> N}\bigg|m_{k}(u,D)(\chi_{(Q_{x}^{*})^{\complement}}f_{k})(y)-\fint_{Q_{x}}m_{k}(u,D)(\chi_{(Q_{x}^{*})^{\complement}}f_{k})(z)\,dz\bigg|^{2}\bigg)^{1/2}\,dy.$$	
By the triangle inequality, it suffices to show
		\begin{align}\label{a17}
			\sup_{x}\vec{T}(\{\chi_{Q_{x}^{*}}f_{k}\})(x)\lesssim F \bigg\|\bigg(\sum_{k}|f_{k}|^{2}\bigg)^{1/2}\bigg\|_{\infty}
		\end{align}
and
		\begin{align}\label{a18}
			\sup_{x}\vec{T}(\{\chi_{(Q_{x}^{*})^{\complement}}f_{k}\})(x)\lesssim F \bigg\|\bigg(\sum_{k}|f_{k}|^{2}\bigg)^{1/2}\bigg\|_{\infty}.
		\end{align}

		We first show \eqref{a17}. From the triangle inequality and the Cauchy-Schwartz inequality and Fubini's theorem, it follows that
		\begin{align*}
			\vec{T}(\{\chi_{Q_{x}^{*}}f_{k}\})(x)&\lesssim |Q_{x}|^{-1/2}\bigg( \sum_{k}\bigg\|\sup_{u\in I}\big|m_{k}(u,D)(\chi_{Q_{x}^{*}}f_{k})(\cdot)\big|\bigg\|_{2}^{2}\bigg)^{1/2}\\
			&\lesssim F|Q_{x}|^{-1/2}\bigg( \int_{Q_{x}^{*}}\sum_{k}|f_{k}(y)|^{2}\,dy\bigg)^{1/2}
			\lesssim F\sup_{y}\bigg( \sum_{k}|f_{k}(y)|^{2}\bigg)^{1/2}.
		\end{align*}

		We next show \eqref{a18}. By Lemma \ref{a20}, for all $y,z\in Q_{x}$ and $u\in I,$ we have
		\begin{align*} &\big|m_{k}(u,D)(\chi_{(Q_{x}^{*})^{\complement}}f_{k})(y)-m_{k}(u,D)(\chi_{(Q_{x}^{*})^{\complement}}f_{k})(z)\big|\\
			&\lesssim E_{k}^{u}(x,y,z)\sup_{w}|f_{k}(w)|\lesssim G C_{0,U}^{l} 2^{-\delta|k+r_{x}|}\bigg\|\bigg(\sum_{k}|f_{k}|^{2}\bigg)^{1/2}\bigg\|_{\infty}.
		\end{align*}
		As a result, we find that, for all $y\in Q_{x}$ and $u\in I,$
		\begin{align*}&\bigg(\sum_{|k+r_{x}|> N}\bigg|m_{k}(u,D)(\chi_{(Q_{x}^{*})^{\complement}}f_{k})(y)-\fint_{Q_{x}}m_{k}(u,D)(\chi_{(Q_{x}^{*})^{\complement}}f_{k})(z)\,dz\bigg|^{2}\bigg)^{1/2}\\
		&\lesssim C_{0,U}^{l}G 2^{-\delta N}\bigg\|\bigg(\sum_{k}|f_{k}|^{2}\bigg)^{1/2}\bigg\|_{\infty},\end{align*}
which in turn implies
		$$\sup_{x}\vec{T}(\chi_{(Q_{x}^{*})^{\complement}}f_{k})(x)\lesssim C_{0,U}^{l}G 2^{-\delta N}\bigg\|\bigg(\sum_{k}|f_{k}|^{2}\bigg)^{1/2}\bigg\|_{\infty}.$$
		Choose $N=\frac{1}{\delta}\log_{2}(C_{0,U}^{l}G/F)$ such that $C_{0,U}^{l}G2^{-\delta N}=F$, where $\delta$ is the one in \eqref{f2}. Then we conclude that
		$$\sup_{x}\vec{T}(\chi_{(Q_{x}^{*})^{\complement}}f_{k})(x)\lesssim F \bigg\|\bigg(\sum_{k}|f_{k}|^{2}\bigg)^{1/2}\bigg\|_{\infty}.$$
This finishes the proof of of Lemma \ref{a19}.
\end{proof}

	Now we use Lemmas \ref{a26} and \ref{a19} to prove our key Lemma \ref{a13}.
	\begin{lemma}\label{a13}
	 Suppose that there exist constants $F$ and $G$ such that $a_{k}$ satisfies
		$$\bigg\|\sup_{u\in I}\big|a_{k}(u,D)f\big|\bigg\|_{q}\leq F\|f\|_{q}\ \ \textrm{and} \ \ \big|\partial_{\xi}^{\alpha}a_{k}(u,\xi)\big|\leq G$$
 for $q\in\{2,\infty\}$, $u\in I,$ $\xi\in K_{1}\times K_{2}$, $|\alpha|\leq 6$ and $k\in \mathbb{Z}$. Then we have
		$$\bigg\|\sup_{u\in I}\bigg(\sum_{k}\big|m_{k}(u,D)f\big|^{2}\bigg)^{1/2}\bigg\|_{p}\lesssim \bigg(1+\big(\log_{2}(C_{0,U}^{l}G/F)\big)^{1/2-1/p}\bigg)F\|f\|_{p}.$$
	\end{lemma}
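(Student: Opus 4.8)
The plan is to adapt the sharp‑maximal‑function scheme of \cite{MR955772} to the homogeneous type space $(\mathbb{R}^{2},\rho,dx)$, carried out for functions valued in the Banach space $B:=C(I;\ell^{2})$, so that $\|\vec{T}f(x)\|_{B}=\sup_{u\in I}\big(\sum_{k}|m_{k}(u,D)f(x)|^{2}\big)^{1/2}$ for $\vec{T}f:=\{m_{k}(u,D)f\}_{k}$. It is enough to prove the inequality for $f\in\mathcal{S}$ and then pass to general $f\in L^{p}$ by a routine limiting argument; for Schwartz $f$ one checks, using $m_{k}(u,D)f=m_{k}(u,D)\tilde{P}_{k}^{2}f$ (valid since $\tilde{\phi}\equiv 1$ on $K_{2}$) and the rapid decay of the pieces $\tilde{P}_{k}^{2}f$, that $\vec{T}f\in L^{p}(\mathbb{R}^{2};B)\cap C(\mathbb{R}^{2};B)$. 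Lemma \ref{a14} then reduces the problem to bounding $\big\|M^{\sharp}_{\textrm{dyad}}\vec{T}f\big\|_{p}$.

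Next I would split the sharp maximal function at a cube‑dependent frequency scale. Set $N\simeq\frac{1}{\delta}\log_{2}\big(C_{0,U}^{l}G/F\big)$ with $\delta$ the constant produced by Lemma \ref{a20}, taking $N$ to be a fixed large constant in the degenerate range $C_{0,U}^{l}G\lesssim F$. For a dyadic cube $Q\ni x$ with $r=r(Q)$, write $g_{k}(y):=m_{k}(u,D)f(y)-\fint_{Q}m_{k}(u,D)f(z)\,dz$; then the triangle inequality in $\ell^{2}$ gives
\[
\fint_{Q}\big\|\vec{T}f(y)-(\vec{T}f)\big|_{Q}\big\|_{B}\,dy
\leq\fint_{Q}\sup_{u\in I}\Big(\sum_{|k+r|\leq N}|g_{k}(y)|^{2}\Big)^{1/2}dy
+\fint_{Q}\sup_{u\in I}\Big(\sum_{|k+r|>N}|g_{k}(y)|^{2}\Big)^{1/2}dy,
\]
so that $M^{\sharp}_{\textrm{dyad}}\vec{T}f(x)\leq S_{1}f(x)+S_{2}f(x)$, where $S_{1},S_{2}$ are the operators of Lemmas \ref{a26} and \ref{a19} except that the fixed cube $Q_{x}$ is replaced by the supremum over all dyadic $Q\ni x$. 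The point is that the bounds of those lemmas are uniform in the choice of cube $Q_{x}\ni x$, hence they also govern these supremum versions: one obtains $\|S_{1}f\|_{p}\lesssim N^{1/2-1/p}F\|f\|_{p}$ — the $L^{2}$ bound by domination by $M^{\sharp}_{\textrm{dyad}}\vec{T}f$ and the $L^{\infty}$ bound by counting the $\lesssim N$ summands, followed by interpolation — and $\|S_{2}(\{f_{k}\})\|_{p}\lesssim F\big\|\big(\sum_{k}|f_{k}|^{2}\big)^{1/2}\big\|_{p}$ via the vector‑valued interpolation Lemma \ref{d1}, the local part $\chi_{Q^{*}}f_{k}$ being controlled by the $L^{2}$ maximal bound together with $|Q^{*}|\lesssim|Q|$ (Lemma \ref{d5}) and the non‑local part $\chi_{(Q^{*})^{\complement}}f_{k}$ by the off‑diagonal decay $E_{k}^{u}\lesssim GC_{0,U}^{l}2^{-\delta|k+r|}$ (Lemma \ref{a20}) together with the choice of $N$.

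Finally, I would apply the $S_{2}$ estimate with $f_{k}:=\tilde{P}_{k}^{2}f$; since $m_{k}(u,D)f=m_{k}(u,D)\tilde{P}_{k}^{2}f$, this identifies $S_{2}(\{\tilde{P}_{k}^{2}f\})$ with $S_{2}f$, so that $\|S_{2}f\|_{p}\lesssim F\big\|\big(\sum_{k}|\tilde{P}_{k}^{2}f|^{2}\big)^{1/2}\big\|_{p}\lesssim F\|f\|_{p}$ by the (fattened) Littlewood--Paley square‑function estimate in the variable $x_{2}$. Combining this with $\|S_{1}f\|_{p}\lesssim N^{1/2-1/p}F\|f\|_{p}$ and the bound $N^{1/2-1/p}\lesssim 1+\big(\log_{2}(C_{0,U}^{l}G/F)\big)^{1/2-1/p}$, together with the reduction of the first paragraph, yields the asserted inequality.

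The step I expect to be the real obstacle is the bookkeeping around the cube‑dependent threshold $N\simeq\log_{2}(C_{0,U}^{l}G/F)$: one must use the \emph{same} $N$ for every dyadic cube so that the supremum‑over‑$Q$ versions of $S_{1}$ and $S_{2}$ genuinely inherit the $L^{p}$ bounds of Lemmas \ref{a26} and \ref{a19}, and one must check that the factor $C_{0,U}^{l}$ — which reflects the non‑homogeneity of the rescaled curve and enters precisely through the off‑diagonal estimate of Lemma \ref{a20} — is exactly absorbed when $N$ is chosen so that $C_{0,U}^{l}G\,2^{-\delta N}\simeq F$, leaving the clean final dependence $1+(\log_{2}(C_{0,U}^{l}G/F))^{1/2-1/p}$.
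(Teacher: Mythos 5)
Your proposal is correct and follows essentially the same route as the paper: reduce to Schwartz $f$, pass to $\big\|M^{\sharp}_{\textrm{dyad}}\vec{T}f\big\|_{p}$ via Lemma \ref{a14}, split at the threshold $N=\frac{1}{\delta}\log_{2}(C_{0,U}^{l}G/F)$, and invoke Lemmas \ref{a26} and \ref{a19} (the paper handles the supremum over cubes by selecting, for each $x$, a near-optimal $Q_x$ with $M^{\sharp}\vec{T}f(x)\leq 2S_{1}f(x)+2S_{2}f(x)$, which is the precise form of your ``uniform in the choice of cube'' remark). The one step you omit is the paper's opening reduction: since Lemmas \ref{a26}, \ref{a19} and \ref{a20} are stated only for $v=0$ while the claim is for general $v\in\mathbb{Z}$, one must first apply the anisotropic rescaling $\xi_{2}\mapsto 2^{-v}\xi_{2}$ together with the index shift $k\mapsto k-v$ (under which $l_{0}(v,k)$ becomes $l_{0}(0,k)$) to reduce to the $v=0$ case; this is routine but should be recorded.
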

	\begin{proof}
		By scaling, it suffices to show
		$$\bigg\|\sup_{u\in I}\bigg(\sum_{k}\big|m_{k}(u,D_{1},2^{-v}D_{2})f\big|^{2}\bigg)^{1/2}\bigg\|_{p}\lesssim \bigg(1+\big(\log_{2}(C_{0,U}^{l}G/F)\big)^{1/2-1/p}\bigg)F\|f\|_{p}.$$
		By the fact that
		$$\bigg\|\sup_{u\in I}\bigg(\sum_{k}\big|m_{k}(u,D_{1},2^{-v}D_{2})f\big|^{2}\bigg)^{1/2}\bigg\|_{p}=\bigg\|\sup_{u\in I}\bigg(\sum_{k}\big|m_{k-v}(u,D_{1},2^{-v}D_{2})f\big|^{2}\bigg)^{1/2}\bigg\|_{p}, $$
		we should only to show
		$$\bigg\|\sup_{u\in I}\bigg(\sum_{k}\big|m_{k-v}(u,D_{1},2^{-v}D_{2})f\big|^{2}\bigg)^{1/2}\bigg\|_{p}\lesssim \bigg(1+\big(\log_{2}(C_{0,U}^{l}G/F)\big)^{1/2-1/p}\bigg)F\|f\|_{p}.$$
		
		Therefore, without loss of generality, we can assume $v=0$ and show
		$$\bigg\|\sup_{u\in I}\bigg(\sum_{k}\big|m_{k}(u,D)f\big|^{2}\bigg)^{1/2}\bigg\|_{p}\lesssim \bigg(1+\big(\log_{2}(C_{0,U}^{l}G/F)\big)^{1/2-1/p}\bigg)F\|f\|_{p}.$$
		By Fatou's lemma, it suffices to show, for all $M\geq1,$
		$$\bigg\|\sup_{u\in I}\bigg(\sum_{|k|\leq M}\big|m_{k}(u,D)f\big|^{2}\bigg)^{1/2}\bigg\|_{p}\lesssim \bigg(1+\big(\log_{2}(C_{0,U}^{l}G/F)\big)^{1/2-1/p}\bigg)F\|f\|_{p},$$
		where the implicit constant is independent of $M$.
		
		By density we can assume $f\in \mathcal{S}$. Define $$\vec{T}_{M}f=\{m_{k}(u,D)f\}_{|k|\leq M}\in L^{p}(\mathbb{R}^{2},L^{\infty}l^{2}_{M})\bigcap C(\mathbb{R}^{2},L^{\infty}l^{2}_{M}),$$
By Lemma \ref{a14}, it suffices to show
		$$\Big\|M^{\sharp}_{\textrm{dyad}}\vec{T}_{M}f\Big\|_{p}\lesssim \bigg(1+\big(\log_{2}(C_{0,U}^{l}G/F)\big)^{1/2-1/p}\bigg)F\|f\|_{p}.$$
		Choose $N=\frac{1}{\delta}\log_{2}(C_{0,U}^{l}G/F)$ and define
		$$S_{1}f(x)=\fint_{Q_{x}} \sup_{u\in I}\bigg(\sum_{|k+r_{x}|\leq N}\bigg|m_{k}(u,D)f(y)-\fint_{Q_{x}}m_{k}(u,D)f(z)\,dz\bigg|^{2}\bigg)^{1/2}\,dy$$
and
		$$S_{2}f(x)=\fint_{Q_{x}} \sup_{u\in I}\bigg(\sum_{|k+r_{x}|> N}\bigg|m_{k}(u,D)f(y)-\fint_{Q_{x}}m_{k}(u,D)f(z)\,dz\bigg|^{2}\bigg)^{1/2}\,dy.$$
		 For each $x\in \mathbb{R}^{2}$, we can find a dyadic cube $x\in Q_{x}$ such that
		$M^{\sharp}\vec{T}_{M}f(x)\leq 2S_{1}f(x)+2S_{2}f(x).$
		It suffices to obtain
		$$\|S_{1}f\|_{p}\lesssim \big(\log_{2}(C_{0,U}^{l}G/F)\big)^{1/2-1/p}F\|f\|_{p} \ \ \textrm{and} \ \ \|S_{2}f\|_{p}\lesssim F\|f\|_{p},$$
		which follows from Lemmas \ref{a26} and \ref{a19}.
	\end{proof}

	\section{the proof of the main theorem}
	This section is devoted to the proof of our main Theorem \ref{curve gamma}. First of all, we will apply Minkowski's inequality and replace the sharp truncation $[-1, 1]$ in the definition of $H^{\gamma}$ by a smooth truncation. Let $\epsilon_0>0$ be a small number that is to be determined, its choice depends only on $\gamma$, and let $\widetilde{\varphi}:\ \R\to \R$ be a non-negative smooth bump function supported on $(-\epsilon_0, \epsilon_0)$ with $\widetilde{\varphi}(t)=1$ for every $|t|\le \epsilon_0/2$. Let $x=(x_{1},x_{2})$, by Minkowski's inequality, we observe that
	\begin{equation}
	    \bigg\|\int_{-1}^1 f(x_1-t, x_2-u(x)\gamma(t))(1-\widetilde{\varphi}(t))\,\frac{dt}{t}\bigg\|_p \lesim_{\epsilon_0} \|f\|_p
	\end{equation}
	for every $p>1$. Therefore, to prove the $L^p$-boundedness of $H^{\gamma}$, it suffices to prove the same boundedness of the operator
	\begin{equation}\label{201205e6-2}
	    \int_{\R} f(x_1-t, x_2-u(x)\gamma(t))\widetilde{\varphi}(t)\,\frac{dt}{t},
	\end{equation}
	which, for the sake of simplifying notation, will still be called as $H^{\gamma}$. Without loss of generality, we assume $u>0$.

Let $\varphi\in C_{c}^{\infty}(\mathbb{R})$ be a nonnegative function, $\supp\varphi\subset[-2,2]$ and $\varphi=1$ on $[-1,1]$. Define $\phi(t)=\varphi(t)-\varphi(2t)$, then
	\begin{align}\label{f3}
		\textrm{$\supp\phi\subset \left\{t\in \mathbb{R} :\ 1/2\leq|t|\leq2\right\},\ $  $\sum_{k\in \mathbb{Z}}\phi_{k}(t)=1$ \ \ for all $t\neq0$,}
	\end{align}
	where $\phi_{k}(t)=\phi(2^{-k}t)$. Let $\tilde{\phi}\in C_{c}^{\infty}(\mathbb{R}\backslash 0)$ such that $\tilde{\phi}=1$ on $\{t\in \mathbb{R} :\ 1/4\leq|t|\leq4\}$. We then define $P_{k}^{2}f(x)=(\phi_{k}(\xi_{2})\widehat{f}(\xi))^\vee(x)$ and $\tilde{P}_{k}^{2}f(x)=(\tilde{\phi}_{k}(\xi_{2})\widehat{f}(\xi))^\vee(x)$ for $k\in \mathbb{Z}$.
	
	To proceed, we will reduce the $L^p$-boundedness of $H^{\gamma}$ to a square function estimate.
	Recall $u:\ \R^2\to \R$ is a Lipschitz function with Lipschitz norm $\|u\|_{\lip}\le 1/2$. This further implies that  $\|u(x_1,\cdot)\|_{\lip}\leq 1/2\ $ for every $x_1\in \mathbb{R}$.
% 	The first step of the proof is to reduce the $L^p$ bounds of $H^{\gamma}$ to a square function estimate.
	\begin{lemma}\label{a1}
		For every $1<p<\infty$, we have
		\begin{align*}
	\bigg\|\sum_{k}(1-\tptwo_k)H^{\gamma}P_{k}^{2}f\bigg\|_{p}\lesssim \|f\|_{p}
		\end{align*}
		for every Schwartz function $f$.
% 		where $P_{k}^{2}$ is defined as in \eqref{d12} by $\phi$ and $\tilde{P}_{k}^{2}$ is defined similarly as $P_{k}^{2}$ by $\tilde{\phi}$, where $\phi$ is the one in \ref{f3} and $\tilde{\phi}=1$ on $1/2\leq|t|\leq2$, $supp\tilde{\phi}\subset 1/4\leq |t|\leq 4.$
	\end{lemma}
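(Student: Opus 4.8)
The operator $\sum_k(1-\widetilde{P}^2_k)H^\gamma P^2_k$ is a ``far from diagonal'' piece: the Fourier multiplier $1-\widetilde{\phi}_k(\xi_2)$ vanishes on the frequency support of $P^2_k$, so the composition $(1-\widetilde{P}^2_k)H^\gamma P^2_k$ only survives because $H^\gamma$ is a \emph{variable} operator, i.e.\ $u=u(x_1,x_2)$ is not constant. The plan is to exploit the Lipschitz smallness of $u$ to show that this composition gains decay. First I would write $H^\gamma P^2_k f(x) = \int_{\mathbb{R}} (P^2_k f)(x_1-t, x_2 - u(x)\gamma(t))\,\widetilde{\varphi}(t)\,\frac{dt}{t}$ and take the Fourier transform only in the second variable, freezing $x_1$; the point is that $P^2_k f(x_1-t,\cdot)$ has frequency localized to $|\xi_2|\simeq 2^k$, and composing with $(1-\widetilde{P}^2_k)$ in the second variable forces $\xi_2$ out of $\{2^{k-2}\le |\xi_2|\le 2^{k+2}\}$. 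Because the ``curve shift'' is $u(x)\gamma(t)$ with $u$ Lipschitz, the phase produced in the second-variable frequency is $e^{-iu(x)\gamma(t)\xi_2}$, and differentiating in $x$ brings down factors bounded by $\|u\|_{\mathrm{Lip}}|\gamma(t)\xi_2| \lesssim 2^k\gamma(\epsilon_0)$ on the support of $\widetilde\varphi$; choosing $\epsilon_0$ small relative to $\gamma$ makes this a genuinely small quantity, which is exactly the mechanism that decouples the $k$-th summand.

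The main technical step is to prove a pointwise kernel estimate for the composed operator $T_k := (1-\widetilde{P}^2_k)H^\gamma P^2_k$ showing that its kernel $K_k(x,y)$ satisfies, say, $|K_k(x,y)| \lesssim 2^{-\delta|j|}\,\mathcal{K}_j(x-y)$ where $2^j$ measures the ``frequency gap'' and $\mathcal{K}_j$ is an $L^1$-normalized bump adapted to a rectangle of dimensions roughly $\epsilon_0 \times (u(x)\gamma(\epsilon_0)+2^{-k+j})$, with the crucial point being summability of the $2^{-\delta|j|}$ factors. I would obtain this by the standard non-stationary phase / integration-by-parts argument: on the frequency side the multiplier of $T_k$ is supported where $|\xi_2/2^k|$ is either $\ll 1$ or $\gg 1$, the $x_1$-integration in $t$ against $\widetilde\varphi(t)\frac{dt}{t}$ is a smooth compactly supported (away from a removable singularity, handled by the usual principal-value cancellation) operation, and the $x_2$-behavior is governed by $e^{-iu(x)\gamma(t)\xi_2}$. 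Repeated integration by parts in $\xi_2$ (using that on the support we are far from the critical scale) yields the $2^{-\delta|j|}$ gain; this is analogous in spirit to the kernel estimates carried out in Lemma~\ref{a20}, but now for the composition rather than a single Littlewood--Paley piece. Once this kernel bound is in hand, summing in $k$ and $j$ and invoking the Calderón--Zygmund machinery on the homogeneous type space $(\mathbb{R}^2,\rho,dx)$ from Section~3 (or simply a vector-valued Schur-type estimate together with the $L^2$ bound) gives $L^p$ boundedness for all $1<p<\infty$.

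For the $L^2$ endpoint, which anchors the interpolation, I would use Plancherel in the second variable together with an almost-orthogonality argument: the operators $T_k$ have ``output'' frequency (in $x_2$) essentially disjoint from their ``input'' frequency, and the smallness of $\|u\|_{\mathrm{Lip}}\gamma(\epsilon_0)$ controls the off-diagonal overlap, so $\big\|\sum_k T_k f\big\|_2^2 \lesssim \sum_k \|T_k f\|_2^2 \lesssim \sum_k \|P^2_k f\|_2^2 \lesssim \|f\|_2^2$, using the individual decay $\|T_k f\|_2 \lesssim \epsilon_0^{\,c}\|P^2_k f\|_2$ or at worst $\|T_k f\|_2\lesssim \|P^2_kf\|_2$ with a summable $\ell^2$ structure in $k$ supplied by the Littlewood--Paley projections. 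The hard part will be the kernel estimate: one must be careful that the factor $u(x)$ appears \emph{inside} the phase and varies with $x$, so differentiating the kernel in $x$ (as the Calderón--Zygmund smoothness condition requires) reintroduces powers of $u(x)\gamma(t)\xi_2$, and one has to check these are absorbed by the decay $2^{-\delta|j|}$ and by the Lipschitz bound on $u$ — this is where the smallness hypothesis $\|u\|_{\mathrm{Lip}}\le 1/(2\gamma(1))$ and the choice of $\epsilon_0$ are used decisively, and it is the only place where the geometry of $\rho$ (rather than the flat metric) really enters.
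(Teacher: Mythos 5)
There is a genuine gap, and it sits exactly at what you call the ``main technical step.'' Your plan is to get a kernel bound for $T_k=(1-\tptwo_k)H^{\gamma}\ptwo_k$ with summable decay $2^{-\delta|j|}$ in the frequency gap by ``repeated integration by parts,'' using that differentiating the phase $e^{-iu(x)\gamma(t)\xi_2}$ in $x$ brings down $\|u\|_{\lip}|\gamma(t)\xi_2|$. Two problems. First, the quantity $\|u\|_{\lip}|\gamma(t)||\xi_2|\simeq 2^{k}\gamma(\epsilon_0)$ is not small uniformly in $k$ — it tends to infinity — so no choice of $\epsilon_0$ makes it a perturbation in the sense you need. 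Second, and more fundamentally, $u$ is only Lipschitz: $\partial_{x_2}u$ exists merely a.e.\ and is only bounded, so you cannot differentiate the phase in $x_2$ even twice, and there is no non-stationary-phase mechanism producing the geometric factors $2^{-\delta|j|}$. The frequency spreading caused by composing a function with Fourier support in $|\xi_2|\simeq 2^k$ with the Lipschitz change of variables $x_2\mapsto x_2-u(x_1,x_2)\gamma(t)$ is precisely the hard phenomenon here; controlling it in $L^p$ \emph{with the sum over $k$ inside the norm} is a deep commutator/square-function estimate. Your $L^2$ almost-orthogonality step has the same issue: the multipliers $1-\tilde\phi_k(\xi_2)$ for different $k$ overlap on most of frequency space, so orthogonality of the outputs again requires exactly the off-diagonal decay you have not established.

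The paper's route avoids all of this. One applies Minkowski's inequality in $t$ (taking absolute values; the singular factor $1/|t|$ is harmless because $|\gamma(t)|\lesssim |t|^{\log_2 \czerol}$ with $\czerol>1$, so $|\gamma(t)|/|t|$ is integrable on the support of $\widetilde\varphi$), and then, for each fixed $x_1$ and $t$, invokes Theorem 1.1 of Di Plinio--Guo--Thiele--Zorin-Kranich \cite{MR3841536}: a one-dimensional estimate, proved via Jones' beta numbers, of the form
\begin{equation*}
\Big\|\sum_k (1-\tptwo_k)\big[\ptwo_k g(\cdot - w(\cdot))\big]\Big\|_{L^p}\lesssim \|w\|_{\lip}\,\|g\|_{L^p},
\end{equation*}
applied in the $x_2$ variable with $w(x_2)=u(x_1,x_2)\gamma(t)$, $\|w\|_{\lip}\le |\gamma(t)|\,\|u\|_{\lip}$. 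That black box is the entire content of the lemma at critical Lipschitz regularity; your proposal in effect assumes it can be rederived by elementary kernel estimates, which it cannot.
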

	\begin{proof}[Proof (of Lemma \ref{a1})]
		This lemma follows from Theorem 1.1 in \cite{MR3841536}, which further relies on Jones' beta numbers \cite{MR1013815}. In the special case where $\gamma$ is a monomial, such a lemma was already proved and used in \cite{MR3841536}.  The proof of the general case is essentially the same. For the sake of completeness, we still include the proof here.
		
		We first write
		\begin{equation}
		   (1-\tptwo_k) H^{\gamma} \ptwo_k f(x)=\int_{\R} (1-\tptwo_k) \big[\ptwo_k f(x_1-t, x_2-u(x)\gamma(t))\big]\widetilde{\varphi}(t)\,\frac{dt}{t}.
		\end{equation}
		By Minkowski's inequality, we have
		\begin{equation}
		   \Big\|(1-\tptwo_k) H^{\gamma} \ptwo_k f\Big\|_{p} \le \int_{\R}\Big \|(1-\tptwo_k) \big[\ptwo_k f(x_1-t, x_2-u(x)\gamma(t))\big]\Big\|_{p} \widetilde{\varphi}(t)\,\frac{dt}{|t|}.
		\end{equation}
		By \cite[Theorem 1.1]{MR3841536}, the last display can be further bounded by
		\begin{equation}\label{201127e6-5}
		     \int_{\R} \Big\|\|f(x_1-t, x_2)\|_{L^p_{x_2}}\|u(x_1, \cdot) \gamma(t)\|_{\lip}\Big\|_{L^p_{x_1}} \widetilde{\varphi}(t)\,\frac{dt}{|t|}.
		\end{equation}
Note that $\|u(x_1, \cdot) \gamma(t)\|_{\lip}\le |\gamma(t)|\|u\|_{\lip}$. Therefore, \eqref{201127e6-5} is bounded by
\begin{equation}
    \|f\|_p \|u\|_{\lip} \int_{\R} |\gamma(t)|\widetilde{\varphi}(t)\,\frac{dt}{|t|}\lesssim \|f\|_p \|u\|_{\lip}.
\end{equation}
In the last step, we used the fact that $|\gamma(t)|\lesssim |t|^{\log_2 \czerol}$, which is a simple consequence of \eqref{curve gamma_z}.
	\end{proof}

Lemma \ref{a1} tells us that in order to prove the $L^p$-boundedness of $H^{\gamma}$, it suffices to consider the $L^p$-boundedness of the ``diagonal" term:
\begin{equation}
    \bigg\|\sum_k \tptwo_k H^{\gamma} \ptwo_k f\bigg\|_p \lesssim \|f\|_p
\end{equation}
for every $p>1$. By the Littlewood-Paley theory and the vector-valued estimates for the maximal operator, it suffices to prove
\begin{equation}\label{201127e6-8}
    \bigg\|\bigg(\sum_k  |H^{\gamma} \ptwo_k f|^2\bigg)^{1/2}\bigg\|_p \lesssim \|f\|_p.
\end{equation}
The proof of this square function estimate will occupy the rest of the paper. In order to bound such a square function, we will first apply a dyadic decomposition in the $t$ variable. For every $u\in \R$, define
\begin{equation}\label{201127e6-9}
    A_{u,l}f(x)=\int_{\R} f(x_{1}-t,x_{2}-u\gamma(t))\phi(2^{l}t)\widetilde{\varphi}(t)\,\frac{dt}{t}.
\end{equation}
We use notations $u(x)=u_{x}$. With these notations, we can write $H^{\gamma}\ptwo_k f$ as a sum of $A_{u_{x}, l}\ptwo_k f$ over $l\in \N$. For each fixed $k\in \Z$, we will split the sum over $l\in \N$ into two parts: A high frequency part and a low frequency part. The intuition behind the splitting is quite standard: If $l$ is ``large", that is, $t\simeq 2^{-l}$ is ``small", then by the uncertainty principle applied to the second variable, the term $u \gamma$ can be treated as a small perturbation since $\ptwo_k f$ is essentially constant at the scale $2^{-k}$ in the second variable; if $l$ is ``small", that is, $t$ is ``large", then we will be able to make use of oscillations to obtain certain exponential decay in $l$, via the local smoothing estimates that were established in the following Lemma \ref{201127lem6-4}. \\

Let us proceed with the decomposition in $l$ described above. For $v\in \Z$, define
\begin{equation}
    E_{v}=\{k\in \Z:\ 2^{v+k}\gamma(\epsilon_{0}/2)\geq1\}.
\end{equation}
Define $v_{x}\in \mathbb{Z}$ satisfies $2^{-v_{x}}u_{x}\in[1,2)$.
We see that if $k\notin E_{v_x}$, then
\begin{equation}
    2^{v_x+k}\gamma(\epsilon_{0}/2)\simeq |u(x)|2^k \gamma(\epsilon_{0}/2)\lesim 1,
\end{equation}
and as a consequence, for every choice of $l$, the term $u\gamma$ can be treated as a perturbation when considering $\ptwo_k f$. To be more precise, we have
\begin{lemma}\label{201128lem6-2}
    For every $x\in \R^2$ and $k\notin E_{v_x}$, we have the pointwise estimate
    \begin{equation}\label{b5}
			\big|H^{\gamma}P_{k}^{2}f(x)\big|\lesssim H_1^{*}P_{k}^{2}f(x)+ M_{S}P_{k}^{2}f(x).
    \end{equation}
    Here and hereafter, $H_1^*$ denotes the maximal Hilbert transform applied in the first variable, $M_{S}$ denotes the composition of some $M_{1}$ and $M_{2}$, which are the maximal function applied in the first and second variables, respectively.
\end{lemma}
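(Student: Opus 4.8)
The plan is to exploit directly the fact that when $k\notin E_{v_x}$, the quantity $2^k|u(x)|\gamma(\epsilon_0/2)$ is bounded by an absolute constant, so the vertical displacement $u(x)\gamma(t)$ induced by the curve is, at the Littlewood--Paley scale $2^{-k}$ in the second variable, harmless. Concretely, writing $g = P^2_k f$, one has by definition
\begin{equation*}
H^\gamma P^2_k f(x) = {\rm p.v.}\int_{\R} g(x_1 - t,\, x_2 - u(x)\gamma(t))\,\widetilde\varphi(t)\,\frac{dt}{t}.
\end{equation*}
The idea is to freeze $u(x)$ — it is just a number once $x$ is fixed — and compare this integral to the one-dimensional Hilbert transform in $t$. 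First I would split $\widetilde\varphi(t)/t$ into its part near $t=0$, where one subtracts off the value at $t=0$ to use the principal value cancellation, and the part away from $0$, where $1/t$ is bounded. For the near-zero part one writes the difference
\begin{equation*}
g(x_1-t, x_2 - u(x)\gamma(t)) - g(x_1-t, x_2),
\end{equation*}
and the key point is that, since $\widehat{g}$ is supported in $|\xi_2|\simeq 2^k$, the function $g$ is ``$2^{-k}$-Lipschitz'' in the second variable in an $L^\infty$-in-$x_2$, mean-over-$x_1$ sense; the shift $u(x)\gamma(t)$ has size $\lesssim 2^{-k}$ on the support of $\phi(2^l t)\widetilde\varphi(t)$ precisely when $k\notin E_{v_x}$, because $\gamma(t)\lesssim \gamma(\epsilon_0/2)$ there. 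Summing the resulting geometric/dyadic bound over $l$ (using the doubling bound $\gamma(t)\lesssim |t|^{\log_2 C_{0,L}}$ from Lemma \ref{c1} to get convergence) produces the term $M_S P^2_k f(x)$, a composition of the one-dimensional Hardy--Littlewood maximal functions in each variable that dominates these Lipschitz-type differences.

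For the remaining pieces — the part of the integral where $1/t$ is bounded, and the ``leftover'' term $\int g(x_1-t,x_2)\widetilde\varphi(t)\,dt/t$ after the subtraction — the first is controlled by $M_1 P^2_k f(x)$ (an average in the first variable only), hence by $M_S$, and the second is exactly a truncated one-dimensional Hilbert transform in the first variable, dominated by the maximal Hilbert transform $H^*_1 P^2_k f(x)$. Assembling these three contributions gives the claimed pointwise bound \eqref{b5}. The one technical point I would be careful about is making the ``perturbation'' step quantitative: one should express $g(x_1-t,x_2-h) - g(x_1-t,x_2)$ with $|h|\lesssim 2^{-k}$ via the Fourier support of $\tilde\phi_k$, e.g. by writing $g = \tilde P^2_k g$ and using that $2^k(\widehat{\tilde\phi}(2^k\cdot))^\vee$ is an $L^1$-normalized approximate identity at scale $2^{-k}$, so that the difference is pointwise bounded by $\int |h| 2^{2k} |\eta(2^k s)|\, |g(x_1-t, x_2-s)|\,ds$ for a fixed Schwartz $\eta$, which after integrating the $2^{-k}|h|\lesssim 2^{-k}\cdot 2^{-k}\cdot(\text{stuff})$ is $\lesssim$ the maximal function $M_2 g(x_1-t,x_2)$; then averaging in $t$ over the dyadic block gives $M_1 M_2 g(x)$.

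The main obstacle is bookkeeping the summation over $l\in\N$ so that the perturbative gain at each scale actually sums: at scale $2^{-l}$ the vertical shift has size $\simeq |u(x)|\gamma(2^{-l}) \simeq 2^{v_x}\gamma(2^{-l})$, and one needs $2^k\cdot 2^{v_x}\gamma(2^{-l}) \lesssim 1$ only in an averaged sense — but for $k\notin E_{v_x}$ this holds uniformly in $l\ge 0$ since $\gamma$ is increasing and $\gamma(2^{-l})\le\gamma(\epsilon_0/2)$ on the support of $\phi(2^l\cdot)\widetilde\varphi$. The point is that one does \emph{not} need decay in $l$ here (that is reserved for the $k\in E_{v_x}$ regime treated later via local smoothing); one only needs that each term is controlled by a \emph{fixed} maximal function and that the sum of the $L^1$ norms of the kernels $\phi(2^l t)\widetilde\varphi(t)/t$ is finite, which is immediate. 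So the proof is essentially a careful but routine comparison argument, and the substance is entirely in the choice of $E_{v_x}$ made just above the statement.
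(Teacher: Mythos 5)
Your proposal is correct and follows essentially the same route as the paper: write $P_k^2f=\tilde P_k^2P_k^2f$, compare $H^{\gamma}P_k^2f$ with the unshifted integral $\int P_k^2f(x_1-t,x_2)\widetilde\varphi(t)\,dt/t$ (dominated by $H_1^*P_k^2f$), and control the difference via the smoothness of the kernel $2^k\widecheck{\tilde{\phi}}(2^k\cdot)$ together with the bound $2^{k}|u_x|\lesssim\gamma(\epsilon_0/2)^{-1}$ coming from $k\notin E_{v_x}$ and the integrability of $|\gamma(t)|\,dt/|t|$ coming from $\gamma(t)\lesssim|t|^{\log_2 C_{0,L}}$, which lands on $M_SP_k^2f$. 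The only slip is your closing claim that the sum over $l$ of the $L^1$ norms of $\phi(2^lt)\widetilde\varphi(t)/t$ is finite --- each such kernel has $L^1$ norm $\simeq 1$, so that sum diverges; the summability over scales comes exclusively from the extra factor $\gamma(t)$ in the difference term, exactly as you state (correctly) earlier in the argument.
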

Next, for $v, k\in \Z$, define
\begin{equation}\label{201127e6-13}
    l_0=l_0(v, k)\in \mathbb{Z} \ \text{ such that }\ 2^{v+k} \gamma(2^{-l_0})\simeq 1.
\end{equation}
There may be multiple choices of $l_0$, we here pick an arbitrary one. Similar to Lemma \ref{201128lem6-2}, we have the following pointwise estimate, i.e., Lemma \ref{201128lem6-3}. All of these two lemmas will be proved in Section $6$.
\begin{lemma}\label{201128lem6-3}
    For every $x\in \R^2$ and $k\in E_{v_x}$, it holds that
    \begin{equation}
        \bigg|\sum_{l\ge l_0(v_x, k)} A_{u_x, l} \ptwo_k f(x)\bigg|\lesim H_1^* \ptwo_k f(x)+ M_S \ptwo_k f(x).
    \end{equation}
\end{lemma}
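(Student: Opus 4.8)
The plan is to make rigorous the heuristic recorded just before the statement: once $l\ge l_0(v_x,k)$, on $\supp\phi(2^lt)$ the vertical displacement $u_x\gamma(t)$ has size $\lesssim 2^{-k}$, which is precisely the scale on which $\ptwo_kf$ is essentially constant in the second variable, so each $A_{u_x,l}\ptwo_kf$ is, up to acceptable errors, a dyadic piece of the one-dimensional Hilbert transform in $x_1$. Concretely, for each $l$ I would split
\begin{equation*}
	A_{u_x,l}\ptwo_kf(x)=\int_{\R}\ptwo_kf(x_1-t,x_2)\,\phi(2^lt)\widetilde\varphi(t)\,\dtt+\int_{\R}\big[\ptwo_kf(x_1-t,x_2-u_x\gamma(t))-\ptwo_kf(x_1-t,x_2)\big]\phi(2^lt)\widetilde\varphi(t)\,\dtt
\end{equation*}
and sum over $l\ge l_0$. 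By the telescoping identity $\sum_{l\ge l_0}\phi(2^lt)=\varphi(2^{l_0}t)$ (for $t\ne0$) together with the cancellation $\int\phi(2^lt)\,\dtt=0$, the first terms add up, as a principal value, to $\mathrm{p.v.}\int_{\R}g(x_1-t)\varphi(2^{l_0}t)\widetilde\varphi(t)\,\dtt$ with $g=\ptwo_kf(\cdot,x_2)\in\mathcal{S}(\R)$; since $\varphi(2^{l_0}t)\widetilde\varphi(t)/t$ is a smooth, compactly supported truncation of $1/t$ equal to $1/t$ near the origin (note $k\in E_{v_x}$ forces $2^{-l_0}\lesssim\epsilon_0$ by Lemma~\ref{a21}(i)), the classical comparison of smoothly and sharply truncated singular integrals bounds it by $H_1^*g(x_1)+M_1g(x_1)\lesssim H_1^*\ptwo_kf(x)+M_S\ptwo_kf(x)$.

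The error terms I would keep dyadic. The main ingredient is the Littlewood--Paley bound
\begin{equation*}
	\big|\ptwo_kf(x_1-t,x_2-a)-\ptwo_kf(x_1-t,x_2)\big|\lesssim\big(2^k|a|\big)\,M_2\tptwo_kf(x_1-t,x_2),\qquad |a|\lesssim2^{-k},
\end{equation*}
which follows from $\ptwo_k=\ptwo_k\tptwo_k$ by writing the left-hand side as $\tptwo_kf(x_1-t,\cdot)$ convolved in $x_2$ against $2^k\check\phi(2^k(\cdot-a))-2^k\check\phi(2^k\cdot)$ and dominating that kernel by $2^k|a|$ times an approximate identity at scale $2^{-k}$. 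The quantitative input is the perturbation estimate: on $\supp\phi(2^lt)$ one has $|t|\simeq2^{-l}$, and since $2^{-v_x}u_x\in[1,2)$ and $2^{v_x+k}\gamma(2^{-l_0})\simeq1$ by \eqref{201127e6-13}, the strict doubling $\gamma(2s)/\gamma(s)\ge\czerol>1$ from Lemma~\ref{c1}(i) gives $\gamma(2^{-l})\le\czerol^{-(l-l_0)}\gamma(2^{-l_0})$, hence $2^k|u_x\gamma(t)|\lesssim2^{v_x+k}\gamma(2^{-l})\lesssim\czerol^{-(l-l_0)}$. In particular $|u_x\gamma(t)|\lesssim2^{-k}$, so the previous display applies with an honest gain, and the $l$-th error term is
\begin{equation*}
	\lesssim\czerol^{-(l-l_0)}\,2^l\int_{|t|\simeq2^{-l}}M_2\tptwo_kf(x_1-t,x_2)\,dt\lesssim\czerol^{-(l-l_0)}\,M_S\tptwo_kf(x);
\end{equation*}
summing the geometric series over $l\ge l_0$ bounds the total error by $M_S\tptwo_kf(x)$, which is as good as $M_S\ptwo_kf(x)$ for the square function estimate \eqref{201127e6-8} that this feeds into.

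The one step needing genuine care is the perturbation estimate $2^k|u_x\gamma(t)|\lesssim\czerol^{-(l-l_0)}$ — that $u_x\gamma(t)$ is a small, geometrically decaying perturbation at the scale $2^{-k}$ over the \emph{entire} range $l\ge l_0$ — which is exactly where the definition of $l_0$ in \eqref{201127e6-13}, the normalization $2^{-v_x}u_x\in[1,2)$, and the strict doubling of $\gamma$ get used together; the remaining ingredients (the truncated Hilbert transform comparison, the kernel bounds behind the Littlewood--Paley difference estimate, and the summation in $l$) are routine. Throughout one may take $f\in\mathcal{S}$, so all principal values and interchanges of sum and integral are legitimate.
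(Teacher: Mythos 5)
Your proposal is correct and follows essentially the same route as the paper: compare $\sum_{l\ge l_0}A_{u_x,l}\ptwo_kf$ with the corresponding flat sum $\sum_{l\ge l_0}\int \ptwo_kf(x_1-t,x_2)\phi(2^lt)\widetilde\varphi(t)\,\dtt$ (controlled by $H_1^*\ptwo_kf$ plus a maximal-function correction), and bound the difference by writing $\ptwo_k=\ptwo_k\tptwo_k$, exploiting the Lipschitz gain $2^k|u_x\gamma(t)|\lesssim \czerol^{-(l-l_0)}$ coming from the definition of $l_0$ and the strict doubling of $\gamma$, and summing the geometric series. Your dyadic bookkeeping (telescoping to $\varphi(2^{l_0}t)$, approximate-identity domination of the kernel difference) is just a slightly more explicit rendering of the paper's computation in \eqref{201203e7-8}.
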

By Lemmas \ref{201128lem6-2} and \ref{201128lem6-3} and the vector-valued estimates for $H_1^*$ and $M_S$, to prove \eqref{201127e6-8}, it remains to prove
\begin{equation}\label{201127e6-15}
    \bigg\|\bigg( \sum_k \chi_{\{k\in E_{v_x}\}} \bigg|\sum_{l\le l_0(v_x, k)} A_{u_x, l} \ptwo_k f(x)\bigg|^2 \bigg)^{1/2} \bigg\|_{p} \lesim \|f\|_p
\end{equation}
for every $p>1$. For $k\in \mathbb{Z}$, define $P_{k}^{1}f(x)=(\phi_{k}(\xi_{1})\widehat{f}(\xi))^\vee(x)$.
Now we carry out a last decomposition, which is a Littlewood-Paley decomposition in the first variable of $f$, write
\begin{equation}\label{201127e6-16}
    A_{u_x, l}\ptwo_k f(x)=\sum_{k'\in \Z} A_{u_x, l}\pone_{k'} \ptwo_k f(x).
\end{equation}
For each fixed $u$, the Fourier multiplier of the convolution operator $A_{u, l}$ is given by
\begin{equation}\label{201127e6-17}
    \int_{\R} e^{i(t\xi+u\gamma(t)\eta)} \phi(2^{l}t)\widetilde{\varphi}(t)\,\frac{dt}{t}=\int_{\R} e^{i(2^{-l} t\xi+u\gamma(2^{-l} t)\eta)} \phi(t)\widetilde{\varphi}(2^{-l}t)\,\frac{dt}{t}.
\end{equation}
Assume that $|\xi|\simeq 2^{k'}$ and $|\eta|\simeq 2^k$. In order for the multiplier to have ``non-trivial" contributions, it should admit a critical point in the region where $|t|\simeq 2^{-l}$, that is,
\begin{equation}
    \xi+u\gamma'(t)\eta=0\  \text{ for some }\ |t|\simeq 2^{-l}.
\end{equation}
This leads to
\begin{equation}\label{201127e6-19}
    2^{k'}\simeq |u| \gamma'(2^{-l}) 2^k.
\end{equation}
In other words, for fixed $u, k$ and $l$, in the sum over $k'$ in \eqref{201127e6-16}, only those terms satisfying \eqref{201127e6-19} will have non-trivial contributions; all other terms can be handled via elementary methods like integration by parts.  Therefore, to prove \eqref{201127e6-15}, we will only prove
\begin{equation}\label{201127e6-20}
    \bigg\|\bigg( \sum_k \chi_{\{k\in E_{v_x}\}} \bigg|\sum_{l\le l_0(v_x, k)} A_{u_x, l} \pone_{\kone}\ptwo_k f(x)\bigg|^2 \bigg)^{1/2} \bigg\|_{p} \lesim \|f\|_p,
\end{equation}
where
\begin{equation}
    \kone=\kone(v_x, k, l)\in \mathbb{Z}\ \ \text{is such that}\ \ 2^{\kone}\simeq 2^{v_x} 2^k \gamma'(2^{-l}).
\end{equation}
In the following part, for ease of notation, we will compress the dependence of $\kone$ on $x, k$ and $l$. We will emphasize such dependence whenever necessary. Moreover, if $x$ and $k$ are clear from the context, we will also leave out the dependence of $l_0$ on $x$ and $k$. \\

One key step in the proof of \eqref{201127e6-19} is the following estimate of local smoothing type, which originally only holds for $p>2$. Here we have a same estimate for every $p>1$ due to the Lipschitz assumption on $u$.
\begin{lemma}\label{201127lem6-4}
    For every $p>1$, it holds that
    \begin{equation}
        \Big\|\chi_{\{k\in E_{v_x}\}} A_{u_x, l_0-l}\pone_{\kone} \ptwo_k f(x) \Big\|_{p} \lesim 2^{-\delta_p l} \|f\|_p
    \end{equation}
    for every $l\in \N$, where $\delta_p>0$ is a small constant, $\kone=\kone(v_x, k, l_0-l)$ and $l_0=l_0(v_x, k)$.
\end{lemma}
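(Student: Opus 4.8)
The plan is to reduce the estimate, by an anisotropic rescaling, to the single–frequency local smoothing estimate for the model operator $A^{l}_{u}$ established in Section~7 (the estimate around \eqref{201210e7.7}), and then to pass from $p\ge 2$ to all $p>1$ using the smallness of $\|u\|_{\lip}$. First I would split the physical space $\R^{2}=\bigsqcup_{v\in\Z}\Omega_{v}$, $\Omega_{v}=\{x:\ 2^{-v}u_{x}\in[1,2)\}$, so that on $\Omega_{v}$ the integers $l_{0}=l_{0}(v,k)$ and $\kone=\kone(v,k,l_{0}-l)$ are frozen. Writing $m:=l_{0}-l$ and applying the dilation $x_{1}=2^{-m}y_{1}$, $x_{2}=2^{v}\gamma(2^{-m})y_{2}$, the substitution $t=2^{-m}s$, and the identity $u\gamma(2^{-m}s)=2^{v}\gamma(2^{-m})\,(2^{-v}u)\,\gamma_{m}(s)$, the operator $A_{u_{x},m}$ becomes an averaging against $\phi(s)\widetilde\varphi(2^{-m}s)\,\frac{ds}{s}$ along the unit--scale curve $s\mapsto(s,\tilde u(y)\gamma_{m}(s))$, where $\tilde u(y):=2^{-v}u_{x}\in[1,2)$ and, by Lemma~\ref{b44}, the $\gamma_{m}$ are uniformly nondegenerate on $I$. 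The projection $\pone_{\kone}\ptwo_{k}$ then becomes a Littlewood--Paley projection to frequency $\simeq\lambda$ in \emph{both} variables, where $\lambda:=2^{v+k}\gamma(2^{l-l_{0}})\simeq\gamma(2^{l-l_{0}})/\gamma(2^{-l_{0}})$ by the defining relation $2^{v+k}\gamma(2^{-l_{0}})\simeq1$; the doubling of $\gamma$ (Lemma~\ref{c1}) gives $\lambda\gtrsim C_{0,L}^{l}$. Hence it suffices to prove, for the rescaled operator, an $L^{p}$ bound $\lesssim\lambda^{-\delta}$, which yields $2^{-\delta_{p}l}$.

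For $p\ge 2$, since $\tilde u(y)\in[1,2)$ one bounds the rescaled operator pointwise by the maximal average $\sup_{w\in[1,2]}$ along $(s,w\gamma_{m}(s))$. The family $\{(s,w\gamma_{m}(s)):w\in[1,2],\ s\in I\}$ satisfies Sogge's cinematic curvature condition uniformly in $m$ --- this is exactly where assumption~(i) of Theorem~\ref{f19} (nonvanishing of $(\gamma'/\gamma'')'$) and the uniform estimates of Lemma~\ref{b44} are used --- so the local smoothing estimate of \cite{MR4078231}, carried out for $A^{l}_{u}$ in Section~7, supplies the gain $\lambda^{-\delta}$ on $L^{p}$ for $p\ge 2$. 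At the endpoint $p=2$ the maximal bound carries no gain, so there one appeals instead to the variable--coefficient $L^{2}$ form of the same estimate.

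For $1<p<2$ the maximal/local smoothing bound is unavailable, so one interpolates the $L^{2}$ estimate above (gain $2^{-\delta_{2}l}$) against a loss--free $L^{1}\to L^{1,\infty}$ bound; this is the only step that genuinely uses $\|u\|_{\lip}\le 1/2$. Concretely, for each fixed $t\in\supp\widetilde\varphi$ the map $x_{2}\mapsto x_{2}-u(x)\gamma(t)$ has derivative $1-\gamma(t)\,\partial_{x_{2}}u\ge 1-\gamma(\epsilon_0)/2\ge 1/2$, because $\gamma(t)\le\gamma(\epsilon_0)<1$ on $\supp\widetilde\varphi$; hence the corresponding change of variables costs only a factor $2$ in $L^{1}$, and after integrating in $t$ and composing with the $L^{1}$--bounded Littlewood--Paley pieces (and the weak $(1,1)$ auxiliary maximal operators, as in the proofs of Lemmas~\ref{201128lem6-2}--\ref{201128lem6-3}) one obtains the decay--free $L^{1}$--type bound. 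Marcinkiewicz interpolation then produces $2^{-\delta_{p}l}$ for $1<p<2$.

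Finally the pieces over $v$ are reassembled: since the first--variable frequency $2^{\kone(v,k,\cdot)}\simeq 2^{v+k}\gamma'(2^{-(l_{0}(v,k)-l)})$ is nondecreasing in $v$ with bounded multiplicity, the projections $\{\pone_{\kone(v,k,\cdot)}\}_{v}$ have bounded overlap, and Littlewood--Paley orthogonality in the first variable (and then in the second over $k$ for the square function) recombines the per--$\Omega_{v}$ estimates into $\lesssim 2^{-\delta_{p}l}\|f\|_{p}$ --- immediate from $\|\cdot\|_{\ell^{p}}\le\|\cdot\|_{\ell^{2}}$ and Littlewood--Paley when $p\ge 2$, and by the same interpolation as above when $1<p<2$. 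I expect the genuine obstacle to be the variable--coefficient local smoothing estimate invoked in the second step: unlike the homogeneous case $\gamma(t)=[t]^{\alpha}$ of \cite{MR3841536}, the critical point of the phase $s\xi+w\gamma_{m}(s)\eta$ moves with $w$, hence with $x$, so one cannot reduce to a fixed convolution operator --- this is precisely the difficulty resolved in Section~7, and here one only sets up the reduction and keeps track of the frequency scale $\lambda\simeq 2^{cl}$.
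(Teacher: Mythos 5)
Your overall skeleton matches the paper's: rescale to the unit--scale operator $A^l_u$ of \eqref{201210e7.7} with frequency $j\simeq\log_2\gamma_{l_0}(2^l)\gtrsim l\log_2\czerol$, invoke the cinematic curvature local smoothing of \cite{MR4078231} (this is Lemma \ref{201210lem7.1}), control the sum over $v$ by the bounded multiplicity of Lemma \ref{lem21-315-51}, and interpolate down to exponents near $1$. But two steps as you state them do not go through.

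First, the low--$p$ endpoint. Your decay--free bound is claimed at $L^1$ (or weak $L^1$) via the change of variables $x_2\mapsto x_2-u(x)\gamma(t)$. That argument controls $\|A_{u_x,m}g\|_1$ by $\|g\|_1$ for a \emph{fixed} function $g$, but in the lemma the input is $g=\pone_{\kone(v_x,k,l_0-l)}\ptwo_kf$, whose first--variable frequency depends on $x$ through $v_x$. Decomposing over the sets $\Omega_v=\{2^{-v}u_x\in[1,2)\}$ you would need $\sum_v\|\pone_{\kone(v)}\ptwo_kf\|_1\lesssim\|f\|_1$, which is false even with the $O(l+1)$ multiplicity of Lemma \ref{lem21-315-51}; and replacing $|\pone_{\kone}\ptwo_kf|$ by $M_1\ptwo_kf$ pointwise forces you to compose with maximal operators, and a composition of weak $(1,1)$ operators is not weak $(1,1)$. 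The paper avoids $L^1$ entirely: it uses the pointwise domination by $M^{\gamma}M_S\ptwo_kf$ together with the $L^p$-boundedness of $M^{\gamma}$ for \emph{every} $p>1$ from \cite{NjSlHx20} (estimate \eqref{201208e5-29}), which absorbs the $x$-dependence of $\kone$ automatically, and then interpolates. Your argument needs this replacement (or some genuinely new idea at $L^1$).

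Second, the range of the local smoothing input. You assert the gain $\lambda^{-\delta}$ on $L^p$ for all $p\ge2$, with a vague appeal to a ``variable--coefficient $L^2$ form'' at $p=2$. The estimate available from the cinematic curvature condition (Lemma \ref{b56}) holds only for some $p\ge6$; at $p=2$ the fixed-$u$ van der Corput gain $2^{-j/2}$ is exactly cancelled by the $2^{j/2}$ cost of the Sobolev embedding in $u$, so there is \emph{no} decay for the $u$-maximal operator at $L^2$, and the $L^2$ case of the lemma is itself obtained only by interpolating the $p\ge6$ decay against the loss--free $p>1$ bound. Once both corrections are made (interpolate between some $p\ge 6$ and exponents near $1$ using $M^{\gamma}$), your reduction and the recombination over $v$ at the high exponent do coincide with the paper's proof.
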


Note that by Fubini's theorem, the desired estimate \eqref{201127e6-20} at $p=2$ follows immediately from Lemma \ref{201127lem6-4}.

The proof of Lemma \ref{201127lem6-4} will be given in Section $7$, we now make one remark. Recall that the phase function of the multiplier in \eqref{201127e6-17}, given  by
\begin{equation}\label{201127e6-23}
    2^{-(l_0-l)} t\xi+u_x  \gamma(2^{-(l_0-l)}t)\eta.
\end{equation}
Note that for every $t$ in the support of $\phi$, it holds that $|t|\simeq 1$; for every $(\xi, \eta)$ in the Fourier support of $\pone_{\kone} \ptwo_k f$, it holds that
\begin{equation}
    |\xi|\simeq 2^{\kone}\simeq |u_x| 2^k \gamma'(2^{-(l_0-l)})\ \ \textrm{and}\ \ |\eta|\simeq 2^k.
\end{equation}
Let us show that both of the terms in \eqref{201127e6-23} have scales bigger than one. This is another way of saying that our phase function \eqref{201127e6-23} may oscillate very fast; local smoothing estimates of Mockenhaupt, Seeger and Sogge \cite{MR1168960} allow us to explore this high oscillation and obtain a favourable decay estimate as in the above lemma. That both terms in \eqref{201127e6-23} are of scale bigger than one is trivial in the case when $\gamma$ is a monomial. In the general case, the assumptions we made on $\gamma$ in Theorem \ref{f19} will appear. Let us be more precise. First, note that
\begin{equation}
    |u_x| \gamma(2^{-l_0+l}t)|\eta|\ge |u_x| \gamma(2^{-l_0})|\eta|\gtrsim 1,
\end{equation}
which follows from the monotonicity assumption on $\gamma$ and the definition of $l_0$ in \eqref{201127e6-13}. The other term is the more interesting one. We need to show that
\begin{equation}
    2^{-l_0+l} |u_x| 2^k \gamma'(2^{-l_0+l})\gtrsim 1,
\end{equation}
which follows from applying item (ii) in the statement of Theorem \ref{f19} with $j=1$. \\
% \begin{proof}[Proof of Lemma \ref{201127lem6_4}]
% To be written.
% \end{proof}

So far we have finished the proof of \eqref{201127e6-20} at $p=2$, which leads to the main Theorem \ref{curve gamma} at $p=2$. Next, we consider $p\neq 2$. It turns out that the case $p<2$ and the case $p>2$ require entirely different tools. The case $p<2$ is easier, and can be handled essentially via the interpolation and the bootstrapping argument of Nagel, Stein and Wainger \cite{MR0466470}. The case $p>2$ is much more complicated, and would require a vector-valued version of the $L^p$ orthogonality principle of Seeger \cite{MR955772}, we here will prove this vector-valued result in a general homogeneous type space instead of $\mathbb{R}^{n}$. Homogeneous spaces were introduced
by Coifman and Weiss \cite{MR0499948}, who discovered that the ``spaces of
homogeneous type'' are the metric spaces to which the Calder$\acute{\textrm{o}}$n-Zygmund
theory extends naturally. It would be interesting to find an approach that works for every $p$ simultaneously. \\

Let us first look at the easier case $p<2$. By interpolation with the exponential decay estimate in Lemma \ref{201127lem6-4} and the Littlewood-Paley inequality, it suffices to prove that
\begin{equation}\label{201127e6-27}
    \bigg\|\bigg( \sum_k \chi_{\{k\in E_{v_x}\}} \bigg| A_{u_x, l_0-l} \pone_{\kone}\ptwo_k f(x)\bigg|^{2} \bigg)^{1/2} \bigg\|_{p} \lesim   \bigg\|\bigg(\sum_k |\ptwo_k f|^{2}\bigg)^{1/2}\bigg\|_p
\end{equation}
for every $l\in \N$. Here $\kone=\kone(v_x, k, l_0-l)$ and $l_0=l_0(v_x, k)$. In order to prove the above vector-valued estimates, we will appeal to the $L^p$-boundedness of the maximal operators that were established in \cite{NjSlHx20}. Indeed, one can also apply Lemma \ref{201127lem6-4} and recover the above result. Define
\begin{equation}
    M^{\gamma} f(x)=\sup_{\epsilon>0} \frac{1}{2\epsilon}\int_{-\epsilon}^{\epsilon} |f(x_1-t, x_2-u(x)\gamma(t))|\widetilde{\varphi}(t)\,dt.
\end{equation}
It was proven in \cite{NjSlHx20} that, under the same assumptions as in Theorem \ref{f19}, it holds that
\begin{equation}\label{201208e5-29}
    \|M^{\gamma} f\|_p \lesim \|f\|_p
\end{equation}
for every $p>1$. Moreover, one can upgrade such scalar-valued bound into a vector-valued bound for free. To be more precise, we also have that
\begin{equation}\label{201201e6-30}
    \bigg\|\bigg(\sum_k |M^{\gamma} f_k|^2\bigg)^{1/2}\bigg\|_p \lesim \bigg\|\bigg(\sum_k | f_k|^2\bigg)^{1/2}\bigg\|_p
\end{equation}
for every $1<p\le 2$. To show \eqref{201127e6-27}, we observe the following pointwise estimate
\begin{equation}
    \big|A_{u_x, l_0-l} \pone_{\kone}\ptwo_k f(x)\big|\lesim M^{\gamma} M_S P^2_k f(x).
\end{equation}
Therefore, the desired estimate \eqref{201127e6-27} follows immediately from \eqref{201201e6-30} and the vector-valued estimates for the strong maximal operator. \\

In the remaining part, we focus on the case $p>2$. By interpolating with the $L^2$ bounds in Lemma \ref{201127lem6-4}, it suffices to prove
\begin{equation}\label{201127e6-20zz}
    \bigg\|\bigg( \sum_k \chi_{\{k\in E_{v_x}\}} \bigg| A_{u_x, l_0-l} \pone_{\kone}\ptwo_k f(x)\bigg|^2 \bigg)^{1/2} \bigg\|_{p} \lesim (l+1)^{10}  \bigg\|\bigg(\sum_k |\ptwo_k f|^2\bigg)^{1/2}\bigg\|_p
\end{equation}
for every $l\ge 0$, where
\begin{equation}
    \kone=\kone(v_x, k, l_0-l) \ \ \text{is such that} \ \ 2^{\kone}\simeq 2^{v_x} 2^k \gamma'(2^{-(l_0-l)}).
\end{equation}
We bound the left hand side of \eqref{201127e6-20zz} by
\begin{equation}\label{201201e6-34}
    \Bigg\| \Bigg[\sum_{v\in \Z} \sup_{u\in [1, 2]} \bigg( \sum_k \chi_{\{k\in E_{v}\}} \bigg| A_{2^v u, l_0-l} \pone_{\kone}\ptwo_k f\bigg|^2 \bigg)^{p/2}\Bigg]^{1/p} \Bigg\|_{p},
\end{equation}
where
\begin{equation}\label{201201e6-35}
    l_0=l_0(v, k) \ \ \text{ satisfying }\ \ 2^{v+k}\gamma(2^{-l_0})\simeq 1,
\end{equation}
and
\begin{equation}\label{201201e6-36}
    \kone=\kone(v, k, l_0-l) \ \ \text{ satisfying } \ \ 2^{\kone}\simeq 2^v 2^k \gamma'(2^{-(l_0-l)}).
\end{equation}

We now state two lemmas.

\begin{lemma}\label{lem21-315-51}
For each fixed $k_{1}$, $k$ and $l$, there are at most $C(l+1)$ choices of $v$ such that \eqref{201201e6-36} holds.
\end{lemma}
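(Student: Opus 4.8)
The plan is to decouple the three constraints by eliminating the unknown $2^{v+k}$ between the two defining relations \eqref{201201e6-35} and \eqref{201201e6-36}. Since \eqref{201201e6-35} gives $2^{v+k}\simeq\gamma(2^{-l_{0}})^{-1}$, substituting this into \eqref{201201e6-36} yields
\begin{equation*}
	2^{\kone}\simeq\frac{\gamma'(2^{-(l_{0}-l)})}{\gamma(2^{-l_{0}})},
\end{equation*}
a relation no longer involving $v$. Setting $m:=l_{0}-l$ — which lies in $(0,1]$ after exponentiation, i.e. $0<2^{-m}\le\epsilon_{0}<1$, because the scale $2^{-(l_{0}-l)}$ must sit in the support of the cutoffs $\phi$ and $\widetilde{\varphi}$ — the right-hand side equals $\gamma'(2^{-m})/\gamma(2^{-m-l})$. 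Thus for fixed $\kone$ and $l$ the admissible $m$ are exactly those integers with $\gamma'(2^{-m})/\gamma(2^{-m-l})\simeq 2^{\kone}$, and it suffices to bound the number of such $m$: each one determines $l_{0}=m+l$, and then \eqref{201201e6-35} pins $v$ down to $O(1)$ choices, while distinct admissible $v$ sharing the same $l_{0}$ are automatically $O(1)$-close.

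To carry this out I would factor
\begin{equation*}
	\frac{\gamma'(2^{-m})}{\gamma(2^{-m-l})}=\frac{\gamma'(2^{-m})}{\gamma(2^{-m})}\cdot\frac{\gamma(2^{-m})}{\gamma(2^{-m-l})}.
\end{equation*}
For the first factor, items (ii) and (iii) of Theorem \ref{f19} taken with $j=1$ give $t\gamma'(t)/\gamma(t)\simeq 1$ on $(0,1]$, hence $\gamma'(2^{-m})/\gamma(2^{-m})\simeq 2^{m}$. For the second factor, writing it as a product of $l$ successive ratios of the form $\gamma(2s)/\gamma(s)$ and using the two-sided doubling bound $\czerol\le\gamma(2t)/\gamma(t)\le\czerou$ from Lemma \ref{c1}, it lies in $[\czerol^{\,l},\czerou^{\,l}]$. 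Therefore $2^{\kone}\simeq 2^{m}\Theta$ with $\Theta\in[\czerol^{\,l},\czerou^{\,l}]$, so $2^{\kone-m}\simeq\Theta$, and taking base-$2$ logarithms confines $m$ to the interval
\begin{equation*}
	\kone-l\log_{2}\czerou-O(1)\ \le\ m\ \le\ \kone-l\log_{2}\czerol+O(1),
\end{equation*}
whose length is $l\big(\log_{2}\czerou-\log_{2}\czerol\big)+O(1)\lesssim l+1$ since $\czerol,\czerou>1$. Hence there are $\lesssim l+1$ admissible $m$, so $\lesssim l+1$ admissible $l_{0}$, and after the $O(1)$-to-one passage back to $v$ we get at most $C(l+1)$ choices of $v$, as claimed.

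The one point I expect to require care is that the quotient $\gamma'(2^{-(l_{0}-l)})/\gamma(2^{-l_{0}})$ mixes two scales separated by the factor $2^{l}$, and a priori it could be almost constant as a function of $m$ — in which case the naive count would be infinite. The resolution is that this quotient is controlled \emph{uniformly} by the doubling of $\gamma$ and $\gamma'$, so its growth rate in $m$ is pinched between $\czerol$ and $\czerou$; the resulting loss of size $l+1$ (which would collapse to $O(1)$ for a homogeneous curve $\gamma(t)=[t]^{\alpha}$) is precisely the gap $\log_{2}(\czerou/\czerol)$ accumulated over the $l$ doubling steps that separate the scales $2^{-l_{0}}$ and $2^{-(l_{0}-l)}$.
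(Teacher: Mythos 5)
Your proposal is correct and follows essentially the same route as the paper: eliminating $2^{v+k}$ between \eqref{201201e6-35} and \eqref{201201e6-36}, using assumption (ii) (with (iii)) at $j=1$ to replace $\gamma'(2^{-(l_0-l)})$ by $2^{l_0-l}\gamma(2^{-(l_0-l)})$, and then the two-sided doubling bound $C_{0,L}^{l}\le\gamma(2^{-(l_0-l)})/\gamma(2^{-l_0})\le C_{0,U}^{l}$ to confine $l_0$ to an interval of length $O(l+1)$, after which \eqref{201201e6-35} pins down $v$ up to $O(1)$. This is exactly the paper's chain $C_{0,L}^{l}2^{l_{0}-l}\lesssim 2^{\kone}\simeq \gamma_{l_{0}}(2^{l})2^{l_{0}-l}\le C_{0,U}^{l}2^{l_{0}-l}$, merely written out in more detail.
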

\begin{proof}[Proof (of Lemma \ref{lem21-315-51})]
Fix $k_{1}$, $k$ and $l$, by the assumption (ii) of Theorem \ref{f19}, \eqref{201201e6-35} and \eqref{201201e6-36}, we have $$C_{0,L}^{l}2^{l_{0}-l}\lesssim2^{\kone}\simeq \gamma_{l_{0}}(2^{l})2^{l_{0}-l}\leq C_{0,U}^{l}2^{l_{0}-l}.$$
Therefore, for each fixed $k_{1}$, there are at most $C_{1}(l+1)$ choices of $l_{0}$. By \eqref{201201e6-35}, for each fixed $l_{0}$, there are at most $C_{2}$ choices of $v$. Then for each fixed $k_{1}$, there are at most $C_{1}C_{2}(l+1)$ choices of $v$ such that \eqref{201201e6-36} holds.
\end{proof}
\begin{lemma}\label{a39}
		Let $2<p<\infty$ and $v\in \Z$. Then
		\begin{align}\label{a41}
			\bigg\|\sup_{ u\in[1,2]}\bigg(\sum_{k\in E_{v}}\bigg|A_{2^{v}u,l_{0}-l}\pone_{\kone} \ptwo_{k} f\bigg|^{2}\bigg)^{1/2}\bigg\|_{p}\lesssim (l+1)^{5} \bigg\|\bigg(\sum_{k\in E_{v}}\big|\pone_{\kone}\ptwo_k f\big|^{2}\bigg)^{1/2}\bigg\|_{p},
		\end{align}
		where $l_0$ and $k_1$ are defined as in \eqref{201201e6-35} and \eqref{201201e6-36}, respectively.
	\end{lemma}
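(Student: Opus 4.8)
The plan is to freeze the parameter $v$, rescale so that the dilation becomes the "unit" one, and then feed the resulting multiplier family into the vector-valued orthogonality machinery of Lemma \ref{a13}. First I would perform the change of variables that turns $A_{2^v u, l_0-l}$ into a convolution operator whose multiplier, after extracting the rescaling $(2^{-l_0}\gamma_{l_0}(2^l)^{-1},2^{-k})$ in the frequency variables (cf.\ the definition of $m_k$ in Section 4), becomes $a_k(u,\xi)=\int e^{i(\xi_1 t + u\gamma_l(t)\xi_2)}\phi(t)\widetilde{\varphi}(2^{-(l_0-l)}t)\,\frac{dt}{t}$, supported in $u\in I=[1,2]$ and in a compact frequency annulus $K_1\times K_2$ by the critical-point analysis \eqref{201127e6-19} that produced $k_1$. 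Here the rescaled curve $\gamma_l$ (see \eqref{c2}) is used precisely so that $a_k$ behaves uniformly in $l_0$, by Lemma \ref{b44}. The $L^p_x$ norm on the left of \eqref{a41} becomes, after undoing the scaling, exactly the quantity $\|\sup_{u\in I}(\sum_k |m_k(u,D)f|^2)^{1/2}\|_p$ controlled by Lemma \ref{a13}, with $f$ replaced by the Littlewood–Paley piece $\pone_{k_1}\ptwo_k f$ and the sum restricted to $k\in E_v$.

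Next I would verify the two hypotheses of Lemma \ref{a13}, namely the symbol bound $|\partial_\xi^\alpha a_k(u,\xi)|\le G$ for $|\alpha|\le 6$ and the uniform maximal estimate $\|\sup_{u\in I}|a_k(u,D)f|\|_q\le F\|f\|_q$ for $q\in\{2,\infty\}$. The symbol bound is routine: differentiating under the integral sign and using Lemma \ref{b44}(i)–(ii) to control $\gamma_l$ and its derivatives on $I$, one gets $G\lesssim 1$ (more precisely $G$ a fixed power of the $C^N$ bounds of $\gamma$, hence $O(1)$). For the maximal estimate, the $q=\infty$ case is trivial from the $L^1$ bound on the kernel; the $q=2$ case is where the local smoothing input enters — one invokes Lemma \ref{201127lem6-4} (or rather the underlying fixed-time/variable-coefficient estimate via Sogge's cinematic curvature condition) at $p=2$, which after rescaling gives $F\lesssim (l+1)^{c}$ for some fixed $c$, i.e.\ a bound polynomial in $l$ but uniform in $k$, $v$. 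Then Lemma \ref{a13} yields the left side of \eqref{a41} $\lesssim (1+(\log_2(C_{0,U}^l G/F))^{1/2-1/p})F\,\|(\sum_{k\in E_v}|\pone_{k_1}\ptwo_k f|^2)^{1/2}\|_p$; since $\log_2(C_{0,U}^l G/F)\lesssim l+1$ and $F\lesssim (l+1)^c$, choosing the implicit constants so that $c+ (1/2-1/p)\le 5$ (which is why the exponent $5$ — well below $10^3$-type bounds — suffices) gives the claimed $(l+1)^5$.

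The main obstacle I anticipate is the $q=2$ maximal bound for $\sup_{u\in I}|a_k(u,D)f|$ uniformly in $k$: the phase $\xi_1 t + u\gamma_l(t)\xi_2$ has a critical point in $t$ whose location depends on $u$ (unlike the monomial case of \cite{MR3841536}, where it does not), so one cannot simply freeze it; the argument must treat the $u$-supremum by a square-function/Sobolev-embedding trick in the $u$ variable combined with stationary phase, and it is exactly here that the rescaled-curve uniformity of Lemma \ref{b44} and the curvature hypotheses (i)–(ii) of Theorem \ref{f19} are essential to keep the constant independent of $k$ and $v$. A secondary, more bookkeeping, obstacle is checking that the error terms discarded when passing from \eqref{201127e6-16} to the single frequency $k_1$ (the non-critical $k'$) really are negligible in the vector-valued norm; this is handled by repeated integration by parts in $t$, exploiting that off the critical scale the phase derivative is $\gtrsim 2^{|k'-k_1|}$, giving rapid decay that is summable in $k'$ and harmless for the $\ell^2_k$ sum.
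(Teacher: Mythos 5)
Your overall strategy --- rescale in the frequency variables so that $A_{2^v u, l_0-l}\pone_{\kone}\ptwo_k$ becomes $a_k(u,D)$ for a symbol family supported in $[1,2]\times K_1\times K_2$, verify the two hypotheses of Lemma \ref{a13}, and read off the bound with $\log_2(C_{0,U}^l G/F)\lesssim l$ --- is the same as the paper's. But there are two genuine problems. First, Lemma \ref{a13} produces a bound of the form $\lesssim(\cdots)F\|f\|_p$ with the \emph{scalar} norm $\|f\|_p$ on the right; it does not accept a different input $f_k$ for each $k$, so it cannot directly yield the square function $\|(\sum_{k\in E_v}|\pone_{\kone}\ptwo_k f|^2)^{1/2}\|_p$ on the right-hand side of \eqref{a41} as you assert. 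An additional step is required: one splits $k$ into residue classes mod $3$, applies the $\|f\|_p$-version of the estimate to the auxiliary function $g=\sum_{k\in J_i\cap E_v}\tilde P^1_{\kone}\tilde P^2_k f$ (whose frequency supports are essentially disjoint, so that $P^1_{\kone}P^2_k g=P^1_{\kone}P^2_k f$ and $\|g\|_p\lesssim\|(\sum_k|\tilde P^1_{\kone}\tilde P^2_k f|^2)^{1/2}\|_p$), and then sums over $i$. Without this (or an equivalent device) the lemma as stated is not proved.

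Second, your verification of the hypotheses of Lemma \ref{a13} is off in the details. The rescaled symbol still carries the large parameter $\gamma_{l_0}(2^l)\simeq C^l$ inside the phase, so $\partial_\xi^\alpha a_k$ is \emph{not} $O(1)$: one only gets $G\lesssim C_{0,U}^{Cl}$ (harmless, since Lemma \ref{a13} depends on $G$ only logarithmically, but your claim $G\lesssim 1$ is false). More importantly, the $q=2$ maximal bound $F\lesssim 1$ does not come from Lemma \ref{201127lem6-4}, which is a fixed-point (non-maximal) $L^p$ estimate and at $p=2$ gives no control of $\sup_{u\in[1,2]}$; the correct route --- which you gesture at in your ``main obstacle'' paragraph but do not commit to --- is the elementary cancellation: van der Corput gives $|a_k(u,\xi)|\lesssim\gamma_{l_0}(2^l)^{-1/2}$, the $u$-derivative of the symbol is $O(\gamma_{l_0}(2^l)^{1/2})$, and the one-dimensional Sobolev embedding in $u$ multiplies these to give $\|\sup_{u}|a_k(u,D)f|\|_2\lesssim\|f\|_2$, i.e.\ $F\lesssim 1$ with no loss in $l$. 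With $F\lesssim 1$ and $G\lesssim C^l$ the factor from Lemma \ref{a13} is $(l+1)^{1/2-1/p}$, comfortably below $(l+1)^5$. Your ``secondary obstacle'' about discarding non-critical $k'$ is not part of this lemma at all; that reduction was already performed in Section 5.
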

	We first assume Lemma \ref{a39}, which will be proved in Section $8$, and continue the proof of \eqref{201127e6-20zz}. By \eqref{201201e6-34}, $l^{2}\subseteq l^{p}$ and Lemmas \ref{lem21-315-51} and \ref{a39}, the left hand side of \eqref{201127e6-20zz} can be bounded by
	\begin{align}\label{21-315-538}
	    &(l+1)^{5} \Bigg\| \Bigg[\sum_v \bigg(\sum_{k\in E_{v}}\big|\pone_{\kone}\ptwo_k f\big|^{2}\bigg)^{p/2}\Bigg]^{1/p}\Bigg\|_{p}
\leq (l+1)^{5} \bigg\| \bigg(\sum_v \sum_{k\in E_{v}}\big|\pone_{\kone}\ptwo_k f\big|^{2}\bigg)^{1/2}\bigg\|_{p}\nonumber\\
&\lesssim  (l+1)^{10} \bigg\| \bigg(\sum_{k'\in \Z} \sum_{k\in \Z}\big|\pone_{k'}\ptwo_k f\big|^{2}\bigg)^{1/2}\bigg\|_{p}.
	\end{align}
In the end, we apply the bi-parameter Littlewood-Paley inequality and finish the proof of \eqref{201127e6-20zz}.\\

\section{Proofs of Lemmas \ref{201128lem6-2} and \ref{201128lem6-3}}

Let us begin with the proof of Lemma \ref{201128lem6-2}. We fix $x\in \R^2$ and $k\notin E_{v_x}$, which means $2^{v_x+k}\lesssim 1$. We have
\begin{equation}\label{201202e7-40}
    H^{\gamma} \ptwo_k f(x)=\int_{\R} \ptwo_k f(x_1-t, x_2-u_x \gamma(t))\widetilde{\varphi}(t)\,\dtt.
\end{equation}
From $\ptwo_k f=\ptwo_k \tilde{P}_{k}^{2}f$ and the definition of $\tilde{P}_{k}^{2}f$, it easily follows that
\begin{equation}
    \ptwo_k f(x)=\int_{\R}\ptwo_k f(x_1, x_2-z)2^k \widecheck{\tilde{\phi}}(2^k z)\,dz.
\end{equation}
We write \eqref{201202e7-40} as
\begin{equation}
    \begin{split}
        & \int_{\R} \int_{\R}\ptwo_k f(x_1-t, x_2-u_x \gamma(t)-z) 2^k \widecheck{\tilde{\phi}}(2^k z) \widetilde{\varphi}(t)\,dz \,\dtt\\
        & =\int_{\R} \int_{\R}\ptwo_k f(x_1-t, x_2-z) 2^k \widecheck{\tilde{\phi}}(2^k (z+u_x \gamma(t))) \widetilde{\varphi}(t)\,dz\, \dtt.
    \end{split}
\end{equation}
We compare it with
\begin{equation}
    \int_{\R} \int_{\R}\ptwo_k f(x_1-t, x_2-z) 2^k \widecheck{\tilde{\phi}}(2^k  z) \widetilde{\varphi}(t)\,dz\, \dtt,
\end{equation}
which is certainly bounded by $H_1^* \ptwo_k f$. The difference is controlled by
\begin{equation}\label{201203e7-5}
    \begin{split}
        & \int_{\R} \int_{\R} |\ptwo_k f(x_1-t, x_2-z)| 2^k \big|\widecheck{\tilde{\phi}}(2^k (z+u_x \gamma(t)))-\widecheck{\tilde{\phi}}(2^k z)\big| \widetilde{\varphi}(t) \,dz \,\frac{dt}{|t|}\\
        & \lesim \sum_{\tau\in \N} (1+\tau)^{-100} \int_{\R} \int_{\R} |\ptwo_k f(x_1-t, x_2-z)| 2^k \chi_{\{\tau 2^{-k}\le |z|\le (\tau+1) 2^{-k}\}}  \widetilde{\varphi}(t)\, dz \frac{|\gamma(t)|}{|t|}\,dt.
    \end{split}
\end{equation}
Recall that $|\gamma(t)|\lesssim |t|^{\log_2 \czerol}$, we see from it that \eqref{201203e7-5} is bounded by the strong maximal function $M_{S}P_{k}^{2}f$. This finishes the proof of Lemma \ref{201128lem6-2}.\\

Next we prove Lemma \ref{201128lem6-3}. Fix $x\in \R^2$, we abbreviate $l_0(v_x, k)$ to $l_0$, and then
\begin{equation}
    \sum_{l\ge l_0} A_{u_x, l} \ptwo_k f(x)=\sum_{l\ge l_0} \int_{\R} \ptwo_k f(x_1-t, x_2-u_x \gamma(t)) \phi_l(t)\widetilde{\varphi}(t)\,\dtt.
\end{equation}
We still compare it with
\begin{equation}
    \sum_{l\ge l_0} \int_{\R} \ptwo_k f(x_1-t, x_2) \phi_l(t)\widetilde{\varphi}(t)\,\dtt,
\end{equation}
which can be bounded by the maximal Hilbert transform $H_1^* \ptwo_k f$. The difference is bounded by
\begin{equation}\label{201203e7-8}
    \begin{split}
        & \sum_{\tau\in \N} (1+\tau)^{-100} \int_{|t|\lesim 2^{-l_0}} \int_{\R} |\ptwo_k f(x_1-t, x_2-z)| 2^k \chi_{\{\tau 2^{-k}\le |z|\le (\tau+1) 2^{-k}\}}  \widetilde{\varphi}(t) \,dz \frac{2^k |u_x||\gamma(t)|}{|t|}\,dt\\
        & \lesim \sum_{\tau\in \N} (1+\tau)^{-100} \int_{|t|\lesim 2^{-l_0}} \int_{\R} |\ptwo_k f(x_1-t, x_2-z)| 2^k \chi_{\{\tau 2^{-k}\le |z|\le (\tau+1) 2^{-k}\}}  \widetilde{\varphi}(t) \,dz \frac{|\gamma(t)|}{\gamma(2^{-l_0})|t|}\,dt.
    \end{split}
\end{equation}
Recall that from Lemma \ref{curve gamma_z}, we have $\gamma(t)\le  \gamma(2t)/\czerol$. This implies that if $|t|\simeq 2^{-l}\le 2^{-l_0}$, then
\begin{equation}
    |\gamma(t)|\le (\czerol)^{-(l-l_0)} \gamma(2^{-l_0}).
\end{equation}
From this we see again that \eqref{201203e7-8} can be bounded by the strong maximal function $M_{S}P_{k}^{2}f$. This finishes our proof.

\section{Proof of Lemma \ref{201127lem6-4}}
By interpolating with the estimate \eqref{201208e5-29}, we see that to prove Lemma \ref{201127lem6-4}, it suffices to prove
\begin{equation}\label{201205e8-40}
\Big\|\chi_{\{k\in E_{v_x}\}} A_{u_x, l_0-l}\pone_{\kone} \ptwo_k f(x) \Big\|_{p} \lesim C_{0,L}^{-\delta_{p}l} \|f\|_p
\end{equation}
for some $p\ge 6$, where $\delta_p>0$ is allowed to depend on $p$. Here $\kone=\kone(v_x, k, l_0-l)$ is such that
\begin{equation*}
    2^{k_1}\simeq 2^{v_x} 2^k 2^{l_0-l} \gamma(2^{-(l_0-l)}).
\end{equation*}
We bound the left hand side of \eqref{201205e8-40} by
\begin{align}\label{201205e8-42}
\bigg\| \sup_{v\in \mathbb{Z}}\sup_{u\in[1,2]}\chi_{\{k\in E_{v}\}}\big|A_{2^{v}u, l_0-l}\pone_{\kone} \ptwo_k f\big| \bigg\|_{p}.
\end{align}
Now we bound the sup over $v\in \Z$ by a $\ell^p$ norm. By Lemma \ref{lem21-315-51} and the method similar as in the proof of \eqref{21-315-538}, it then suffices to prove
\begin{align}\label{201205e8-43}
\bigg\| \sup_{u\in[1,2]}\chi_{\{k\in E_{v}\}}\big|A_{2^{v}u, l_0-l}\pone_{\kone} \ptwo_k f\big| \bigg\|_{p}\lesssim C_{0,L}^{-\delta_{p}l}\|f\|_p
\end{align}
uniformly in $v\in \Z$, where $\kone=\kone(v, k, l_0-l)$.
By scaling and the support assumption in \eqref{201205e6-2}, it suffices to show
\begin{align}\label{201205e8-44}
\bigg\| \sup_{u\in[1,2]}\Big|m_{\log_{2}\gamma_{l_0}(2^{l})}^{l_{0}-l}(u,D)f\Big| \bigg\|_{p}\lesssim \gamma_{l_0}(2^{l})^{-\delta_{p}}\|f\|_p\lesssim C_{0,L}^{-\delta_{p}l}\|f\|_p
\end{align}
 for all $k,v\in \mathbb{Z}$ satisfying $k\in E_{v}$ and $2^{-l_{0}+l}\lesssim \epsilon_{0}$, where
\begin{equation}\label{201211e7.6}
m_{j}^{l}(u,\xi)=\phi_{j}(\xi_{1})\phi_{j}(\xi_{2}) \int_{\R} e^{i\xi_{1}t}e^{i\xi_{2}u\gamma_{l}(t)}\phi(t)\,\dtt .
\end{equation}
We remark that $m_j^l(u, \xi)$ is the multiplier of the convolution operator $A^l_u \pone_j \ptwo_j$, where
\begin{equation}\label{201210e7.7}
A^l_u f(x)=\int_{\R} f(x_1-t, x_2-u\gamma_l(t))\phi(t)\,\dtt.
\end{equation}
Let us organize what we need to prove in the following lemma.
\begin{lemma}\label{201210lem7.1}
For every $j\ge 1$ and $l\in \N$ with $2^l\gg 1$, it holds that
\begin{equation}
\bigg\|\sup_{u\in [1, 2]} \big|m_j^l(u, D) f\big|\bigg\|_{p}\lesssim 2^{-\delta_p j} \|f\|_p
\end{equation}
for some $p\ge 6$ and some $\delta_p>0$ that is allowed to depend on $p$. In particular, the implicit constant is uniform in $l$.
\end{lemma}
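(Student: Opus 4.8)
The plan is to realize $m_j^l(u,D)$ as (a rescaled piece of) an averaging operator over a family of plane curves and invoke a local smoothing estimate. Concretely, $m_j^l(u,\xi)=\phi_j(\xi_1)\phi_j(\xi_2)\widehat{K^l_u}(\xi)$ where $K^l_u$ is the kernel of $A^l_u$ in \eqref{201210e7.7}; by a parabolic-type rescaling $(\xi_1,\xi_2)\mapsto 2^j(\xi_1,\xi_2)$ the estimate reduces to a frequency-localized bound for $A^l_u$ at unit-scaled frequency $\simeq 2^j$ in both variables, i.e. to a decay estimate for $\sup_{u\in[1,2]}\big|(\chi_j \widehat{A^l_u f})^{\vee}\big|$ where $\chi_j$ cuts $|\xi|\simeq 2^j$. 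The key geometric input is that the curves $t\mapsto(t,u\gamma_l(t))$, $t\in\operatorname{supp}\phi$, $u\in[1,2]$, satisfy Sogge's cinematic curvature condition \emph{uniformly in} $l$: this is exactly what Lemma \ref{b44} buys us, since there $|\gamma_l^{(k)}|\simeq 1$ for $k=0,1,2$ and $|\gamma_l^{(k)}|\lesssim 1$ for $3\le k\le N$ on $I=[1/2,2]$, with implicit constants independent of $l$. In particular the phase $\Phi(t;u,\xi)=t\xi_1+u\gamma_l(t)\xi_2$ has a nondegenerate critical point in $t$ (from $|\gamma_l''|\simeq 1$) and the associated "cone" $\{(\xi_1,\xi_2)=\lambda(-u\gamma_l'(t),1)\}$ has nonvanishing curvature, again uniformly in $l$.

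First I would set up the reduction: write $A^l_u P^1_j P^2_j = 2^{2j}$-normalized convolution with a kernel whose Fourier multiplier is $m_j^l(u,\cdot)$, and observe that after the rescaling $x\mapsto 2^{-j}x$ the operator becomes a frequency-$\simeq 1$ piece of the one-parameter averaging operator $f\mapsto \int f(x-2^{j}(t, u\gamma_l(t)))\phi(t)\,dt/t$ — equivalently a piece of the $L^p$ local smoothing problem for the curve family $\{(t,u\gamma_l(t))\}$ at frequency scale $2^j$. Second, I would invoke the (fixed-time) local smoothing / $L^p$-Sobolev estimate for averages over curves with cinematic curvature — this is the circle of results of Mockenhaupt–Seeger–Sogge \cite{MR1168960} and the sharp variant under Sogge's cinematic condition \cite{MR4078231}; it yields, for $p$ sufficiently large (any $p\ge 6$ suffices, with room to spare), a gain of $2^{-\eta_p j}$ over the trivial bound when one includes the supremum over the one-dimensional parameter $u\in[1,2]$. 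The uniformity of the cinematic-curvature constants from Lemma \ref{b44} is what makes the resulting constant uniform in $l$. Third, I would track that the $1/t$ (odd) nature of the weight $\phi(t)\,dt/t$ versus $\phi(t)\,dt$ changes nothing, since $\phi$ is supported away from the origin; and that the extra harmless factors $\phi_j(\xi_1)\phi_j(\xi_2)$ only localize frequencies and cost nothing.

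The main obstacle — flagged by the authors themselves in the introduction — is that, unlike the homogeneous model $\gamma(t)=t^\alpha$ of \cite{MR3841536}, the critical point $t_c$ of $\Phi$ in $t$ here \emph{depends on $u$} (solving $\xi_1 + u\gamma_l'(t)\xi_2=0$, i.e. $t_c=(\gamma_l')^{-1}(-\xi_1/(u\xi_2))$), so one cannot pull the $u$-supremum through a fixed stationary-phase expansion. The remedy is to treat the $u$-variable honestly as the extra variable in a local smoothing estimate: i.e. prove an $L^p(\mathbb{R}^2\times[1,2])$ estimate for the full family, or equivalently bound $\sup_u$ by a $1/p'$-loss Sobolev embedding in $u$ combined with an $L^p$ estimate for $A^l_u$ and for $\partial_u A^l_u$, both of which still enjoy cinematic curvature — this is precisely where \cite{MR4078231} is used, and where Lemma \ref{b44}(iii),(iv) (control of $(\gamma_l')^{-1}$ and its derivatives, uniformly in $l$) enters, to guarantee that $t_c$ and its $u$-derivatives are bounded and that the curvature lower bound along the cone survives differentiation in $u$. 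The routine parts — the rescaling bookkeeping, the integration-by-parts disposal of the region where the phase has no critical point, and the interpolation that converts the large-$p$ gain plus the $L^\infty$ triviality into a gain for all the $p$ we need — I would not belabor; the content is entirely in the uniform cinematic-curvature estimate for $\gamma_l$ and the $u$-dependence of the critical point.
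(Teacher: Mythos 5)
Your proposal is correct and follows essentially the same route as the paper: a fixed-$u$ van der Corput bound (the paper's Lemma \ref{b28}) plus an $L^p([1,2]\times\R^2)$ local smoothing estimate for the stationary-phase piece under Sogge's cinematic curvature condition from \cite{MR4078231} (the paper's Lemma \ref{b56}), with the supremum over $u$ recovered by Sobolev embedding in $u$ and with uniformity in $l$ supplied by Lemma \ref{b44}. The only detail worth flagging is that the lower bound $|\partial_{\xi_2}^2\partial_u\Psi_l|\gtrsim 1$ verifying the cinematic curvature condition comes from assumption (i) of Theorem \ref{f19} (on $(\gamma'/\gamma'')'$), not merely from $|\gamma_l''|\simeq 1$.
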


The rest of this section is devoted to the proof of Lemma \ref{201210lem7.1}. Before we start the proof, let us make a remark that this is the place where we need to apply Lemma \ref{b44}, which states that $\gamma_l$ behaves ``uniformly" in the parameter $l$. In particular, if we assume that $\gamma:\ \R\to \R$ is homogeneous, then $\gamma_l$ will be independent of $l$, and certainly all estimates we obtained will also be uniform in $l$. \\

% As a result, it is a direct result of Lemma \ref{201205lem8-5}. Before proving Lemma \ref{201205lem8-5}, we need several lemmas as follows.
%
%{\color{red}
%By the doubling property of $\gamma'(t)$ in Lemma \ref{c1}, there exits a constant $c_{13}$ such that if $c_{13}\leq l$, we have
%		\begin{align}\label{c36}
%			[\frac{1}{16},8]\subset\gamma_{l}'([2^{-c_{13}},2^{c_{13}}]).
%		\end{align}
%And we define $\epsilon= 2^{-c_{13}-4}$.} Note the restriction $c_{13}\leq l$ just to make sure $\gamma_{l}'(t)$ is well defined on $[2^{-c_{13}},2^{c_{13}}]$ and we always make a strong restriction $2^{-l}\leq \epsilon$.
Let us begin the proof with an estimate for a fixed $u$, which is a simple corollary of van der Corput's lemma.
\begin{lemma}\label{b28}
For every $j\ge 1$, $2^{l}\gg 1$, $u\in [1, 2]$ and every $p\ge 2$, it holds that
		\begin{align}\label{b27}
			\Big\|m_j^l(u, D) f\Big\|_{p}\lesssim 2^{-j/p} \|f\|_p,
		\end{align}
where the implicit constant is uniform in $l$.
\end{lemma}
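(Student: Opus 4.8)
The plan is to prove the fixed-$u$ bound \eqref{b27} by showing that $m_j^l(u,\cdot)$ is essentially a Fourier multiplier of a one-dimensional oscillatory integral operator with a nondegenerate phase, and that such operators gain $1/p$ powers of decay in the frequency scale $2^j$. First I would record that on the Fourier support of the symbol we have $|\xi_1|\simeq 2^j$ and $|\xi_2|\simeq 2^j$, and that
\[
m_j^l(u,\xi)=\phi_j(\xi_1)\phi_j(\xi_2)\int_{\R} e^{i(\xi_1 t+\xi_2 u\gamma_l(t))}\phi(t)\,\dtt,
\]
so that after the change of variables $\xi_1=2^j\sigma_1$, $\xi_2=2^j\sigma_2$ with $|\sigma_1|,|\sigma_2|\simeq 1$, the integral becomes $\int e^{i2^j(\sigma_1 t+\sigma_2 u\gamma_l(t))}\phi(t)\,\dtt$ with phase $\Phi(t)=\sigma_1 t+\sigma_2 u\gamma_l(t)$ and amplitude $\phi(t)/t$ supported in $|t|\simeq 1$.

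The key analytic input is Lemma \ref{b44}: on $I=[1/2,2]$ we have $|\gamma_l''(t)|\simeq 1$ uniformly in $l$, hence $|\Phi''(t)|=|\sigma_2|\,|u|\,|\gamma_l''(t)|\simeq 1$ on the support of $\phi$, with the implicit constants independent of $l$ and of the admissible $\sigma_1,\sigma_2,u$. Thus van der Corput's lemma (the second-derivative version) gives the pointwise bound $|m_j^l(u,\xi)|\lesssim 2^{-j/2}$ uniformly in the parameters. This already yields the $p=\infty$ endpoint trivially and, more importantly, combined with a kernel estimate it yields the $L^p$ bound: one writes $m_j^l(u,D)$ as convolution with a kernel $K_j^l$, and since the symbol is supported in a box of dimensions $2^j\times 2^j$ and satisfies $|\partial_\xi^\alpha m_j^l|\lesssim 2^{-j/2}2^{-j|\alpha|}$ for $|\alpha|$ up to a fixed order (differentiating the phase brings down factors of $t\simeq 1$ times bounded derivatives of $\gamma_l$, again controlled uniformly by Lemma \ref{b44}), integration by parts gives $\|K_j^l\|_{L^1}\lesssim 2^{-j/2}\cdot 2^{2j}\cdot 2^{-2j}=2^{-j/2}$ wait — more carefully, the $L^1$ norm of the kernel of a symbol of size $A$ supported in a box of volume $V$ with the natural derivative bounds is $\lesssim A$, so $\|m_j^l(u,D)\|_{L^1\to L^1}\lesssim 2^{-j/2}$ and $\|m_j^l(u,D)\|_{L^\infty\to L^\infty}\lesssim 2^{-j/2}$; interpolating with the trivial $L^2\to L^2$ bound $\|m_j^l(u,D)\|_{2\to2}\lesssim 2^{-j/2}$ gives $2^{-j/2}$ on all $L^p$, which is even better than the claimed $2^{-j/p}$. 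Alternatively, and matching the paper's stated exponent, one interpolates the $L^2$ bound $\|m_j^l(u,D)f\|_2\lesssim 2^{-j/2}\|f\|_2$ (from the pointwise van der Corput bound and Plancherel) against the trivial $L^1$ bound $\|m_j^l(u,D)f\|_1\lesssim \|K_j^l\|_1\|f\|_1\lesssim\|f\|_1$, obtaining $\|m_j^l(u,D)f\|_p\lesssim 2^{-j/p}\|f\|_p$ for $p\ge 2$.

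The step requiring the most care is verifying that all the implicit constants are genuinely uniform in $l$: this is exactly why Lemma \ref{b44} was proved, so I would carefully cite items (i) and (ii) there for $|\gamma_l''|\simeq 1$ on $I$ and items (i)--(ii) for the higher-derivative bounds $|\gamma_l^{(k)}|\lesssim 1$ needed in the integration-by-parts kernel estimate, stressing that $\supp\phi\subset\{1/2\le|t|\le2\}=I$ so only the behavior of $\gamma_l$ on $I$ enters. A minor point is that the amplitude $\phi(t)/t$ and its derivatives are bounded on $I$, which is immediate. No stationary-phase subtlety arises here since we only need the $2^{-j/2}$ van der Corput decay, not the precise asymptotics; the genuinely hard work — the $\sup_{u\in[1,2]}$ and the resulting local smoothing improvement to $2^{-\delta_p j}$ for $p\ge 6$ — is deferred to Lemma \ref{201210lem7.1} and is not needed for this lemma.
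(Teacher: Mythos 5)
Your core argument is the paper's: van der Corput with $|\gamma_l''|\simeq 1$ on $[1/2,2]$ (Lemma \ref{b44}) gives the pointwise bound $|m_j^l(u,\xi)|\lesssim 2^{-j/2}$, hence the $L^2$ bound by Plancherel, and one interpolates with a trivial endpoint bound. The paper takes the $L^\infty$ endpoint ($\|m_j^l(u,D)f\|_\infty\lesssim\|f\|_\infty$, trivial because $A^l_u P^1_jP^2_j$ is an averaging operator composed with frequency projections) and interpolates $L^2$--$L^\infty$ to get $2^{-j\cdot(2/p)\cdot(1/2)}=2^{-j/p}$ for $p\ge 2$ directly. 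Your fallback via the trivial $L^1$ bound $\|K_j^l\|_1\lesssim 1$ also works, but note that interpolating $L^1$ with $L^2$ only covers $1\le p\le 2$ (yielding $2^{-j/p'}$); to reach $p\ge 2$ you must then dualize, which is legitimate here since $m_j^l(u,D)$ is a convolution operator. So the stated conclusion is reachable, but the interpolation as written is off by a duality step.

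However, your first ``alternative'' contains a genuine error that you should discard rather than leave as a parenthetical hesitation. The derivative bounds $|\partial_\xi^\alpha m_j^l(u,\xi)|\lesssim 2^{-j/2}2^{-j|\alpha|}$ are false: differentiating the oscillatory factor in $\xi_1$ or $\xi_2$ brings down $it$ or $iu\gamma_l(t)$, which are of size $O(1)$ on $\supp\phi$, not $O(2^{-j})$. The symbol is not $2^{-j/2}$ times a Mikhlin-type symbol at scale $2^j$; by stationary phase it is $2^{-j/2}a(\xi)e^{i\Psi_l(u,\xi)}$ with $\nabla_\xi\Psi_l=O(1)$, i.e.\ a Fourier integral piece of order $-1/2$, whose kernel has $L^1$ norm $O(1)$ rather than $O(2^{-j/2})$. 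Consequently the claimed bound $\|m_j^l(u,D)\|_{p\to p}\lesssim 2^{-j/2}$ for all $p$ is false; it would contradict the sharp fixed-parameter regularity of averages over curves with nonvanishing curvature (which lose a full $1/2$ derivative only at $p=2$, degenerating to $1/p$ as $p\to\infty$), and it would trivialize the local smoothing analysis of Lemma \ref{b56}. The exponent $2^{-j/p}$ in \eqref{b27} is the correct one, and the only valid route to it in your write-up is the interpolation with a trivial endpoint.
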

%		If $\xi_{1}\xi_{2}>0$, we have
%		\begin{align}\label{b32}
%			|m_{j}^{l}(u,\xi)|\leq 2^{-j}.
%		\end{align}
		\begin{proof}[Proof (of Lemma \ref{b28})]
By interpolation, it suffices to prove \eqref{b27} at $p=2$ and $p=\infty$. The estimate at $p=\infty$ is trivial since $A^l_u$ is an averaging operator. The desired $L^2$ bound follows immediately from the following pointwise estimate of the multiplier:
			\begin{align}\label{b23}
				|m_{j}^{l}(u,\xi)|\lesssim 2^{-j/2},
			\end{align}
which further follows from van der Corput's lemma and
\begin{equation}
|\gamma_{l}''(t)|\gtrsim 1 \ \text{ for all}\ t\in [1/2, 2],
\end{equation}
uniformly in $l$, as stated in Lemma \ref{b44}.
	\end{proof}

Let us continue with the proof of Lemma \ref{201210lem7.1}. In the next lemma, we prove a local smoothing estimate, which, combined with Lemma \ref{b28} and the Sobolev embedding inequality, further implies Lemma \ref{201210lem7.1}. We refer to \cite{NjSlHx20} for a more detailed discussion.

%
%	\begin{lemma}\label{5}
%	 Suppose $2^{-l}\leq \epsilon$ and
%		define
%		\begin{align*}
%			a(\lambda,t_{0})=\int e^{i\frac{\lambda}{2}\gamma_{l}''(t_{0}) t^{2}}e^{i\frac{\lambda}{6} t^{3}\int_{0}^{1}(1-\theta)^{2}\gamma_{l}'''(\theta t+t_{0})d\theta }\tilde{\phi}(t+t_{0})dt
%		\end{align*}
%		where $\tilde{\phi}(t)=\frac{\phi(t)}{t}\chi_{(0,\infty)}(t)$.
%		Then
%		\begin{align*}
%			\textrm{$|\partial_{\lambda}^{\alpha}\partial_{t_{0}}^{\beta}a(\lambda,t_{0})|\lesssim |\lambda|^{-1/2-|\alpha|},$ if $|\lambda|\geq1$ and $t_{0}\in I=[2^{-c_{13}},2^{c_{13}}]$,}
%		\end{align*}
%		where the constants are independent of l.
%	\end{lemma}
%	\begin{proof}
%		The proof can be founded in \cite[Lemma 2.3]{NjSlHx20},({\color{red}!}) which involves standard technology about oscillatory integrals.
%	\end{proof}

	\begin{lemma}\label{b56}
For every $j\ge 1$, $2^l\gg 1$ and every $p\ge 6$, it holds that
%		Let $6\leq p<\infty$ and suppose $2^{-l}\leq \epsilon$.
%		Define
%		\begin{align}\label{b26}
%			T_{u}f(x)=\int_{0}^{\infty} f(x_{1}-t,x_{2}-u\gamma_{l}(t))\phi(t)\frac{dt}{t}.
%		\end{align}
%		Then for all $j\geq1,$ we have
%		\begin{align}\label{b21}
%			\big\|T_{1}P_{j}^{1}P_{j} ^{2}f\big\|_{p}\lesssim 2^{-j/p}\|f\|_{p},
%		\end{align}
		\begin{align}\label{b29}
			\bigg(\int_{1}^{2}\Big\|m_j^l(u, D)f\Big\|_{p}^{p}\,du\bigg)^{1/p}\lesssim 2^{-(1/p+\delta_{p})j}\|f\|_{p},
		\end{align}
for some $\delta_p>0$, uniformly in $l$.
	\end{lemma}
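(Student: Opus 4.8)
The plan is to recognize $m_j^l(u,D)$ as a rescaled Fourier integral operator and invoke the local smoothing estimates of Mockenhaupt--Seeger--Sogge \cite{MR1168960}, in the refined form adapted to Sogge's cinematic curvature condition \cite{MR4078231}. Concretely, I would first write out the kernel of $m_j^l(u,D)$: since
$$
m_j^l(u,\xi)=\phi_j(\xi_1)\phi_j(\xi_2)\int_{\R} e^{i(\xi_1 t+\xi_2 u\gamma_l(t))}\phi(t)\,\dtt,
$$
the operator $A^l_u\pone_j\ptwo_j$ is an averaging operator over the curve $t\mapsto (t,u\gamma_l(t))$, localized to frequency $|\xi_1|,|\xi_2|\simeq 2^j$. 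Introducing the family of curves parametrized by $u\in[1,2]$, the associated two-parameter phase $\Phi(x,t,u,\xi)=\xi_1 t+\xi_2 u\gamma_l(t)$ and its $x$-, $u$-dependence define a Fourier integral operator whose canonical relation satisfies the cinematic curvature condition precisely because of the nondegeneracy statements in Lemma \ref{b44}: $|\gamma_l''(t)|\simeq 1$ and $|\gamma_l'''(t)|\lesssim 1$ uniformly in $l$ for $t\in I$. The crucial point is that all the relevant derivative bounds on $\gamma_l$, hence all implicit constants in the curvature condition, are uniform in $l$ by Lemma \ref{b44}(i),(ii),(iv); so one genuinely gets a bound uniform in $l$.

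The key steps, in order, would be: (1) reduce to a frequency-localized statement by freezing $|\xi|\simeq 2^j$ and rescaling, so that the problem becomes a fixed-time (here $t\in I=[1/2,2]$) local smoothing estimate for an FIO of order $0$ with the extra averaging in $u\in[1,2]$ built in; (2) verify the cinematic curvature hypothesis — that the curves $\{(t,u\gamma_l(t)):t\in I\}$, as $u$ varies, have everywhere nonvanishing curvature and that consecutive curves in the family are "transverse" in the appropriate sense (the mixed Hessian in $(t,u)$ is nondegenerate because $\partial_u\partial_t(u\gamma_l(t))=\gamma_l'(t)\simeq 1$ and $\partial_t^2(u\gamma_l(t))=u\gamma_l''(t)\simeq 1$) — all with constants controlled via Lemma \ref{b44}; (3) quote the local smoothing inequality of \cite{MR1168960}, or its sharper descendant in \cite{MR4078231} for $p\ge 6$ (the wave-operator local smoothing exponent regime, now known in full by Guth--Wang--Zhang but \cite{MR4078231} suffices here), to conclude that the $L^p_x L^p_u$ norm gains $2^{-\varepsilon_p j}$ beyond the trivial fixed-$u$ bound; (4) combine this gain with the fixed-$u$ decay $2^{-j/p}$ from Lemma \ref{b28} (or rather re-derive the $2^{-1/p}$ part of the exponent from the $L^p$ FIO bound directly) to obtain the claimed exponent $2^{-(1/p+\delta_p)j}$.

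The main obstacle I expect is step (2) together with the uniformity in $l$: one must check carefully that the phase $\xi_1 t+\xi_2 u\gamma_l(t)$ genuinely satisfies Sogge's cinematic curvature condition \emph{with constants independent of $l$}, and in particular that the critical point of the $t$-integral — which, unlike in the homogeneous case $\gamma(t)=t^\alpha$, now depends on $u$ — stays in a fixed compact subinterval of $I$ where all the lower and upper bounds of Lemma \ref{b44} are available. The $u$-dependence of the critical point is exactly the "essential difficulty" flagged in the introduction, and handling it amounts to a careful implicit-function-theorem argument: writing the critical point $t_c(u,\xi_1/\xi_2)$ via $(\gamma_l')^{-1}$, whose derivatives are controlled by Lemma \ref{b44}(iii),(iv), one sees that $t_c$ ranges over a compact set and varies smoothly with bounded derivatives, so the curvature quantities are bounded above and below uniformly. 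A secondary technical point is that \cite{MR1168960} / \cite{MR4078231} are stated for smooth compactly supported symbols, so one should first cut $m_j^l$ into the "main" piece supported near the critical point (where the curvature hypothesis applies) and an error piece where repeated integration by parts in $t$ gives rapid decay in $2^j$; the error piece contributes only $O(2^{-Nj})$ and is harmless.
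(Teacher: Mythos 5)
Your overall strategy --- stationary phase to extract a phase $\Psi_l(u,\xi)=\xi_1 t_c+\xi_2 u\gamma_l(t_c)$ with a symbol of order $-1/2$, followed by Sogge's local smoothing theorem from \cite{MR4078231}, with uniformity in $l$ supplied by Lemma \ref{b44} --- is exactly the paper's. However, your step (2), the verification of the cinematic curvature condition, contains a genuine gap. The quantities you check, $\partial_t^2(u\gamma_l(t))=u\gamma_l''(t)\simeq 1$ and $\partial_u\partial_t(u\gamma_l(t))=\gamma_l'(t)\simeq 1$, only encode the curvature of each individual curve and the non-characteristic dependence on $u$; they yield the order $-1/2$ of the symbol and hence the fixed-$u$ bound of Lemma \ref{b28}, but they do \emph{not} yield the cone condition, which is the hypothesis that actually produces the extra gain $2^{-\delta_p j}$ upon averaging in $u$. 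The cone condition requires $\operatorname{rank}\partial^2_{\xi\xi}\,\partial_u\Psi_l=1$ on the support of the symbol. Since $\partial_u\Psi_l(u,\xi)=\xi_2\,\gamma_l(t_c(u,\xi))$ with $t_c=(\gamma_l')^{-1}(-\xi_1/(u\xi_2))$, writing $g=\gamma_l\circ(\gamma_l')^{-1}$ one finds $g'(s)=(\gamma_l'/\gamma_l'')\big((\gamma_l')^{-1}(s)\big)$, so the rank-one condition is equivalent to $(\gamma_l'/\gamma_l'')'\neq 0$. This is precisely assumption (i) of Theorem \ref{f19}, which is uniform in $l$ because $(\gamma_l'/\gamma_l'')'(t)=(\gamma'/\gamma'')'(2^{-l}t)$; it is the inequality \eqref{9} that the paper verifies, and it is the reason assumption (i) appears in the theorem at all. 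It is \emph{not} implied by $|\gamma_l''|\simeq 1$ and $|\gamma_l'''|\lesssim 1$: a curve with $\gamma'/\gamma''$ locally constant (exponential-type behaviour of $\gamma'$) satisfies both of your bounds, yet the swept cone is flat and no local smoothing gain is available.

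Two smaller remarks. First, within the proof of Lemma \ref{b56} the factor $2^{-j/p}$ comes directly from the order $-1/2$ of the stationary-phase symbol fed into the local smoothing theorem; Lemma \ref{b28} is not combined with \eqref{b29} here but is used afterwards, together with \eqref{b29} and a Sobolev embedding in $u$, to control the supremum over $u$ in Lemma \ref{201210lem7.1}. Second, your point about locating the critical point is well taken but slightly off: the paper shows $t_c\in[2^{-c_3},2^{c_3}]$ for a constant $c_3$ independent of $l$ (using the doubling of $\gamma'$ and then choosing $\epsilon_0=2^{-c_3-4}$ so that $\gamma_l'$ is defined there); this fixed compact set need not be a subinterval of $I=[1/2,2]$, and the bounds of Lemma \ref{b44} must be invoked on this larger set.
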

	\begin{proof}[Proof (of Lemma \ref{b56})] Recall the definition of the multiplier $m_j^l$ in \eqref{201211e7.6}. Let $t_c$ be the critical point of the phase function there, that is,
\beq\label{21-316-712}
\xi_1+ \xi_2 u \gamma'_l(t_c)=0.
\endeq
Next we show if $t_{c}$ satisfies \eqref{21-316-712}, then $t_{c}\simeq1$.
By \eqref{21-316-712}, for all $\xi_{1},\xi_{2}\in\supp \phi_{j}$ and $u\in (1/2,4)$, one has
\begin{align}\label{21-316-713}
\gamma'_l(t_c)\in [2^{-4},2^{4}].
\end{align}
By the doubling property of $\gamma'$ in Lemma \ref{c1}, there exits a constant $c_{3}$(independent of $l$) such that if $c_{3}\leq l$, we have
\begin{align}\label{21-316-714}
	[2^{-4},2^{4}]\subset\gamma_{l}'([2^{-c_{3}},2^{c_{3}}]).
\end{align}
We now define $\epsilon_{0}= 2^{-c_{3}-4}$. The condition $c_{3}\leq l$ is only to make sure $\gamma_{l}'(t)$ is well defined on $[2^{-c_{3}},2^{c_{3}}]$ and will be satisfied by the support condition in \eqref{201205e6-2}. By \eqref{21-316-713} and \eqref{21-316-714}, we have $t_{c}\in[2^{-c_{3}},2^{c_{3}}]$.
By the stationary phase formula in \cite{MR1232192}, we can write
\beq
m_j^l(u, \xi)=a_{j,l}(u, \xi) e^{i\Psi_l(u, \xi)} +e_{j,l}(u, \xi),
\endeq
where
\beq
\Psi_l(u, \xi)=\xi_1 t_c(\xi)+ \xi_2 u \gamma_l(t_c(\xi)),
\endeq
$a_{j,l}(u, \xi)$ is a symbol of order $-1/2$ that is supported on $\{\xi\in \mathbb{R}^2 :\  |\xi_1|\simeq |\xi_2|\simeq 2^j\}$ and $e_{j,l}$ is a smooth symbol. The assumption (iii) in Theorem \ref{f19} guarantees that everything is uniform in $l$ whenever $2^l\gg 1$. It suffices to show that
\beq\label{21-315-716}
\bigg\|\int_{\R^2} \widehat{f}(\xi) a_{j,l}(u, \xi) e^{i\Psi_l(u, \xi)} e^{ix\cdot \xi} \,d\xi\bigg\|_{L^p([1, 2]\times \R^2)}  \lesssim 2^{-(1/p+\delta_p)j}\|f\|_p.
\endeq
We will show that $\Psi_l(u, \xi)$ satisfies Sogge's cinematic curvature condition in \cite{MR4078231}. Define $Q$ is the orthogonal matrix such that $Qe_{1}=(\frac{\sqrt{2}}{2},-\frac{\sqrt{2}}{2})$, $\Psi_l^{Q}(x,u,\xi)=\Psi_l(x,u,Q\xi)$ and $a_{j,l}^{Q}(x,u,\xi)=a_{j,l}(x,u,Q\xi)$. On $\supp a_{j,l}^{Q}$, by the assumption (i) of Theorem \ref{f19}, we have
\begin{align}\label{9}
\big|\partial_{\xi_{2}}^{2}\partial_{u}\Psi_l^{Q}(x,u,\xi)\big|\gtrsim1
\end{align}
uniformly in $l$ whenever $2^l\gg 1$. So we have verified the cinematic curvature condition in the first quadrant in $\xi$-space. For other quadrants, the proofs are similar and we leave them out. With Sogge's cinematic curvature condition of phase function $\Psi_l(u, \xi)$ at our disposal, \eqref{21-315-716} follows by the local smoothing estimate proven in \cite{MR4078231}. One can read \cite{NjSlHx20} for more details.
\end{proof}

\section{Proof of Lemma \ref{a39}}
		
		We first show that, for all $v\in \mathbb{Z}$ and $l\geq0$, we have
		\begin{align}\label{a40}
			\bigg\|\sup_{ u\in[1,2]}\bigg(\sum_{k\in E_{v}}\bigg|A_{2^{v}u,l_{0}-l}P_{\kone}^{1}P_{k}^{2}f\bigg|^{2}\bigg)^{1/2}\bigg\|_{p}\lesssim (l+1)^{5}\|f\|_{p}.
		\end{align}
		Define
		\begin{align*}
			m_{k}^{l}(u,\xi)=\int_{\R} e^{i2^{-l_{0}}\xi_{1} t}e^{iu2^{-k}\xi_{2}\gamma_{l_{0}}(2^{l})\gamma_{l_{0}-l}(t)}\phi(t)\tilde{\varphi}(2^{l-l_{0}}t)\,\frac{dt}{t}\phi_{l_{0}}(\gamma_{l_{0}}(2^{l})^{-1}\xi_{1})\phi_{k}(\xi_{2})
		\end{align*}
		where $\gamma_{l_{0}}(2^{l})$ and $\gamma_{l_{0}-l}(t)$ are defined as in \eqref{c2}.
		By scaling, it suffices to show
		\begin{align}\label{d10}
			\bigg\|\sup_{ u\in[1,2]}\bigg(\sum_{k\in E_{v}}\big|m_{k}^{l}(u,D)f\big|^{2}\bigg)^{1/2}\bigg\|_{p}\lesssim (l+1)^{5} \|f\|_{p}.
		\end{align}
		Define $a_{k}^{l}(u,\xi)=\psi(u)m_{k}^{l}(u,2^{l_{0}}\gamma_{l_{0}}(2^{l})\xi_{1},2^{k}\xi_{2})$ for some $\psi(u)\in C_{c}^{\infty}(1/2,4)$ and $\psi=1$ on $[1,2]$. Then
		\begin{align}\label{b15}
			a_{k}^{l}(u,\xi)=\psi(u)\int_{\R} e^{i\gamma_{l_{0}}(2^{l})\xi_{1} t}e^{iu\xi_{2}\gamma_{l_{0}}(2^{l})\gamma_{l_{0}-l}(t)}\phi(t)\tilde{\varphi}(2^{l-l_{0}}t)\,\frac{dt}{t}\phi(\xi_{1})\phi(\xi_{2})
		\end{align}
		and
		$\supp a_{k}\subset I\times K$, where $I=\supp\psi$ and $K=\supp\phi\times \supp\phi\subset[1/2,2]\times[1/2,2]$.
		
		We have that, for all $u\in I,$ $\xi\in K$ and $k\in E_{v}$,
		\begin{align}\label{b18}
			|a_{k}^{l}(u,\xi)|\lesssim \gamma_{l_{0}}(2^{l})^{-1/2},
		\end{align}
		where we used van der Corput's lemma and Lemma \ref{b44}.
		
		Note $\frac{d}{du}a_{k}^{l}(u,\xi)$ can be written as the sum of the following two terms:
		\begin{align*}
			\psi'(u)\int_{\R} e^{i2^{j}\xi_{1} t}e^{iu\xi_{2}\gamma_{l_{0}}(2^{l})\gamma_{l_{0}-l}(t)}\phi(t)\tilde{\varphi}(2^{l-l_{0}}t)\,\frac{dt}{t}\phi(\xi_{1})\phi(\xi_{2}),
		\end{align*}
and
		\begin{align*}
		\gamma_{l_{0}}(2^{l})\psi(u)\int_{\R} e^{i2^{j}\xi_{1} t}e^{iu\xi_{2}\gamma_{l_{0}}(2^{l})\gamma_{l_{0}-l}(t)}\gamma_{-l_{0}+l}(t)\phi(t)\tilde{\varphi}(2^{l-l_{0}}t)\,\frac{dt}{t}\phi(\xi_{1})\xi_{2}\phi(\xi_{2}).
		\end{align*}
	By Lemma \ref{b44} for all $m\geq0$ and $t\in \supp\phi$, we have $|(\frac{d}{dt})^{m}(\gamma_{l_{0}-l}(t))|\lesssim 1$, where the implicit constant is independent of $l_{0}$ and $l$. Then similar as \eqref{b18}, we have for all $u\in I,$
		\begin{align}\label{b19}
			\bigg|\frac{d}{du}a_{k}^{l}(u,\xi)\bigg|\lesssim \gamma_{l_{0}}(2^{l})^{1/2}.
		\end{align}
		By \eqref{b18}, \eqref{b19} and Plancherel's theorem, we conclude that
		\begin{align*}
			\big\|a_{k}^{l}(u,D)f\big\|_{2}\lesssim\gamma_{l_{0}}(2^{l})^{-1/2}\|f\|_{2} \ \ \textrm{and} \ \ \bigg\|\frac{d}{du}a_{k}^{l}(u,D)f\bigg\|_{2}\lesssim\gamma_{l_{0}}(2^{l})^{1/2}\|f\|_{2}.
		\end{align*}
		By the following Sobolev embedding inequality,
		\begin{align*}
			\sup_{s\in I}|g(s)|^{2}\lesssim |g(1)|^{2}+\bigg(\int_{I}|g(s)|^{2}\,ds\bigg)\bigg(\int_{I}\big|\frac{d}{ds}g(s)\big|^{2}\,ds\bigg)
		\end{align*}
		where $g\in C^{\infty}(I)$, we get
		\begin{align}\label{b11}
			\bigg\|\sup_{u\in I}\big|a_{k}^{l}(u,D)f\big|\bigg\|_{2}\lesssim \|f\|_{2}.
		\end{align}
		By \eqref{b15}, it follows that
		\begin{align*}
			a_{k}^{l}(u,D)f=\psi(u)\int_{\R} P_{0}^{1}P_{0}^{2}f(x_{1}-2^{j}t,x_{2}-u\gamma_{l_{0}}(2^{l})\gamma_{l_{0}-l}(t))\phi(t)\tilde{\varphi}(2^{l-l_{0}}t)\,\frac{dt}{t}.
		\end{align*}
		Then we have $\sup_{x}\sup_{u\in I}|a_{k}^{l}(u,D)f(x)|\lesssim \|f\|_{\infty}.$
		We also have
		$|\partial_{\xi}^{\alpha}a_{k}^{l}(u,\xi)|\lesssim \gamma_{l_{0}}(2^{l})^{6l}\lesssim C_{0,U}^{6l}$
		 for all $u\in I,$ $\xi\in K$, $k\in E_{v}$ and $|\alpha|\leq 6$, where we used the doubling property of $\gamma$ in Lemma \ref{c1}.
		
		Therefore, by Lemma \ref{a13}, we conclude that
		\begin{align*}
			\bigg\|\sup_{u\in I}\bigg(\sum_{k\in E_{v}}\big|\psi(u)m_{k}^{l}(u,D)f\big|^{2}\bigg)^{1/2}\bigg\|_{p}\lesssim \big(1+\log_{2}(C_{0,L}^{7l})\big)^{\frac{1}{2}-\frac{1}{p}}\|f\|_{p}\lesssim (l+1)^{5} \|f\|_{p}.
		\end{align*}
		This along with the fact that $\psi=1$ on $[1,2]$ yields \eqref{d10}.
		
		Finally, we use \eqref{a40} to show \eqref{a41}.
		Define
		\begin{align*}
			\textrm{$J_{i}=\{3k+i:\ k\in \mathbb{Z}\}$,\ \ $i=1,2,3.$}
		\end{align*}
		Fix $i=1,2,3$ and define
		$g=\sum_{k\in J_{i}\bigcap E_{v}}\tilde{P}_{\kone}^{1}\tilde{P}_{k}^{2}f$ with $(\tilde{P}_{k}^{2}f)^\wedge(\xi)=\tilde{\phi}(2^{-k}\xi_{2})\widehat{f}(\xi)$,
	where $\tilde{\phi}\in C_{c}^{\infty}(1/4\leq|\xi_{2}|\leq4)$ and $\tilde{\phi}=1$ on $\{\xi_2\in \mathbb{R}  :\ 1/2\leq|\xi_2|\leq2\}$.
		By \eqref{a40}, we find that
		\begin{align*}
			\bigg\|\sup_{ u\in[1,2]}\bigg(\sum_{k\in J_{i}\bigcap E_{v}}\big|A_{2^{v}u,l_{0}-l}P_{\kone}^{1}P_{k}^{2}g\big|^{2}\bigg)^{1/2}\bigg\|_{p}\lesssim (l+1)^{5}\|g\|_{p}.
		\end{align*}
		Note that
		\begin{align*}
			P_{\kone}^{1}P_{k}^{2}g=P_{\kone}^{1}P_{k}^{2}f ~ \textrm{for~ all} ~k\in J_{i}, \ \ \textrm{and} \ \ \ \|g\|_{p}\lesssim \bigg\|\bigg(\sum_{k\in J_{i}\bigcap E_{v}}\big|\tilde{P}_{\kone}^{1}\tilde{P}_{k}^{2}f\big|^{2}\bigg)^{1/2}\bigg\|_{p}.
		\end{align*}
		For all $i=1,2,3,$ we have
		$$\bigg\|\sup_{ u\in[1,2]}\bigg(\sum_{k\in J_{i}\bigcap E_{v}}\big|A_{2^{v}u,l_{0}-l}P_{\kone}^{1}P_{k}^{2}f\big|^{2}\bigg)^{1/2}\bigg\|_{p}\lesssim (l+1)^{5}\bigg\|\bigg(\sum_{k\in J_{i}\bigcap E_{v}}\big|\tilde{P}_{\kone}^{1}\tilde{P}_{k}^{2}f\big|^{2}\bigg)^{1/2}\bigg\|_{p}.$$
		Then \eqref{a41} follows by the triangle inequality.
	
{\bf Acknowledgements.}	The authors are very grateful to Shaoming Guo for his constant support and many valuable discussions. The authors thank Lixin Yan and Liang Song for helpful suggestions. H.Y. is supported by the Guangdong Basic and Applied Basic Research Foundation (No. 2020A1515110241).

\bibliographystyle{plain}

\bibliography{HT}

\end{document}